\def\MAF{{\rm MAF}}
\def\norm#1{\|#1\|}
\let\phi=\varphi
\let\ol=\overline
\newtheorem{remark}[theorem]{Remark}
\newtheorem{assumption}[theorem]{Assumption}
\newcommand{\R}{\mathbb{R}}
\newcommand{\Rn}{\mathbb{R}^n}
\newcommand{\Rp}{\mathbb{R}_+}
\newcommand{\Rnp}{\ensuremath{\mathbb{R}^n_+}}
\newcommand{\N}{\mathbb{N}}
\newcommand{\K}{\ensuremath{\mathcal{K}}}
\newcommand{\KL}{\ensuremath{\mathcal{KL}}}
\newcommand{\Kinf}{\ensuremath{\mathcal{K}_\infty}}
\newcommand{\Knn}[1][n]{\ensuremath{(\K\cup \{0\})^{#1\times #1}}}
\newcommand{\Kinfnn}[1][n]{\ensuremath{(\Kinf\cup \{0\})^{#1\times #1}}}
\newcommand{\id}{\ensuremath{\mbox{id}}}
\newcommand{\conv}{\ensuremath{\mbox{conv}}}
\newcommand{\einsnorm}[2]{\ensuremath{
  \!\!\;\!\!\!\;
  \left\bracevert\!\!\!\!\!\left\bracevert
      \!
      #1\ifthenelse{\equal{x#2}{x}}{}{(#2)}
      \!
    \right\bracevert\!\!\!\!\!\right\bracevert
  \!\!\;\!\!\!\;
}}
\let\epsilon=\varepsilon
\newcounter{enumctr}
\title{
Small gain theorems for large scale systems and construction of
ISS
Lyapunov functions%
\thanks{%
 This work was done while B.~S.~R\"uffer was with the
 School of Electrical Engineering and
 Computer Science, University of Newcastle, Callaghan, NSW 2308,
 Australia. %
 Sergey Dashkovskiy has been
 supported by the German Research Foundation (DFG) as part of the
 Collaborative Research Center 637 "Autonomous Cooperating Logistic
 Processes: A Paradigm Shift and its Limitations" (SFB 637).
 B.~S.~R\"uffer has been supported by the Australian Research
 Council under grant DP0771131. Fabian Wirth is supported by 
 the DFG priority programme 1305 ``Control Theory of Digitally Networked Dynamical Systems''.}}
\author{Sergey N. Dashkovskiy\thanks{ Universit{\"a}t Bremen, Zentrum
  f{\"u}r Technomathematik, Postfach 330440, 28334 Bremen, Germany,
  {\tt dsn@math.uni-bremen.de}}  \and %
Bj{\"o}rn S. R{\"u}ffer\thanks{%
Department of Electrical and Electronic Engineering, %
University of Melbourne, %
Parkville, Victoria 3010, %
Australia, %
{\tt brueffer@unimelb.edu.au}}%
\and%
Fabian R. Wirth\thanks{Institut f\"ur Mathematik, Universit\"at
  W{\"u}rzburg, Am Hubland, D-97074 W\"urzburg, Germany, {\tt
    wirth@mathematik.uni-wuerzburg.de}} }
\begin{document}

\maketitle

\begin{abstract}
We consider interconnections of $n$ nonlinear
subsystems in the input-to-state stability (ISS) framework. For each
subsystem an ISS Lyapunov function is given that treats the other
subsystems as independent inputs. 
A gain matrix is used to encode the
mutual dependencies of the systems in the network. Under a small
gain assumption on the monotone operator induced by the gain matrix,
a locally Lipschitz continuous ISS Lyapunov function is obtained constructively
for the entire network by appropriately scaling the individual
Lyapunov functions for the subsystems.
The results are obtained in a general formulation of ISS, the cases of summation, maximization
and separation with respect to external gains are obtained as corollaries.
\end{abstract}

\begin{keywords}
Nonlinear systems, input-to-state stability, interconnected systems,
large scale systems, Lipschitz ISS Lyapunov function, small gain
condition
\end{keywords}

\begin{AMS} 93A15, 34D20, 47H07
\end{AMS}

\pagestyle{myheadings} \thispagestyle{plain} \markboth{S.N.
DASHKOVSKIY, B.S. R{\"U}FFER, F.R. WIRTH}{ISS LYAPUNOV FUNCTIONS
FOR INTERCONNECTED SYSTEMS}

\section{Introduction}
\label{sec:introduction}
In many applications large scale systems are obtained through the
interconnection of a number of smaller components.  The stability
analysis of such interconnected systems may be a difficult task
especially in the case of a large number of subsystems, arbitrary
interconnection topologies, and nonlinear subsystems.

One of the earliest tools in the stability analysis of feedback
interconnections of nonlinear systems are small gain theorems.
Such results have been obtained by many authors starting with
\cite{Zam66}. These results are classically built on the notion of
$L^p$ gains, see \cite{CCF07} for a recent, very readable account
of the developments in this area. While most small gain results
for interconnected systems yield only sufficient conditions, in
\cite{CCF07} it has been shown in a behavioral framework how the
notion of gains can be modified so that the small gain condition
is also necessary for robust stability.

Small gain theorems for large scale systems have been developed,
e.g., in \cite{Sil91,Vid81,RHL77}. In \cite{Sil91} the notions of
connective stability and stabilization are introduced for
interconnections of linear systems using the concept of vector
Lyapunov functions. In \cite{RHL77} stability conditions in terms
of Lyapunov functions of subsystems have been derived. For the
linear case characterizations of quadratic stability of large
scale interconnections have been obtained in \cite{HinrPrit09}. A
common feature of these references is that the gains describing
the interconnection are essentially linear. With the introduction
of the concept of input-to-state stability in \cite{Son89}, it has
become a common approach to consider gains as a nonlinear
functions of the norm of the input. In this nonlinear case small
gain results have been derived first for the interconnection of
two systems in \cite{JTP94,Tee96}. A Lyapunov version of the same
result is given in \cite{JMW96}. A general small gain condition
for large scale ISS systems has been presented in \cite{DRW-mcss}.
Recently, such arguments have been used in the stability analysis
of observers \cite{APA07}, in the stability analysis of
decentralized model predictive control \cite{RMS07} and in the
stability analysis of groups of autonomous vehicles.

During the revision of this paper it came to our attention that,
following the first general small gain theorems for networks
\cite{Pot03,Tee05,DRW05-cdcecc,DRW06b,DRW07-NOLCOS,DRW-mcss}, other
generalizations of small gain results based on similar ideas have
been obtained very recently using the maximization
formulation of ISS: A generalized small gain theorem for
output-Lagrange-input-to-output stable systems in network
interconnections has been obtained in
\cite{JiangWang:2008:A-generalization-of-the-nonlinear-small-:}. 
In this reference the authors study ISS in the maximization framework and conclude 
ISS from a small gain condition in the cycle formulation. It has been noted
in \cite{DRW06b} that in the maximum case 
the cycle condition is equivalent to the operator 
condition examined here.
An extension of generalized small gain results to retarded functional
differential equations based on the more general cycle condition and
vector Lyapunov functions has recently been obtained in
\cite{KarafyllisJiang:5:A-Vector-Small-Gain-Theorem-for-General-:}. 
In this reference a construction of a Lyapunov function is shown which
takes a different approach to the construction of an overall
Lyapunov function. This construction depends vitally on the use of 
the maximum formulation of ISS.

In this paper we present sufficient conditions for the existence
of an ISS Lyapunov function for a system obtained as the
interconnection of many subsystems. The results are of interest in
two ways. First, it is shown that a small gain condition is
sufficient for input-to-state stability of the large scale system
in the Lyapunov formulation. Secondly, an explicit formula for an
overall Lyapunov function is given.  As the dimensions of the
subsystems are essentially lower than the dimension of their
interconnection, finding Lyapunov functions for them may be an
easier task than for the whole system.

Our approach is based on the notion of {\em input-to-state
stability} (ISS) introduced in \cite{Son89} for nonlinear systems
with inputs.  A system is ISS if, roughly speaking, it is globally
asymptotically stable in the absence of inputs (so-called 0-GAS)
and if any trajectory eventually enters a ball centered at the
equilibrium point and with radius given by a monotone continuous
function, the gain, of the size of the input (the so-called
\emph{asymptotic gain property}), cf.\ \cite{SoW96}.

The concept of ISS turned out to be particularly well suited to
the investigation of interconnections. For example, it is known
that cascades of ISS systems are again ISS \cite{Son89} and small
gain results have been obtained. We briefly review the results of
\cite{JTP94,JMW96} in order to explain the motivation for the
approach of this paper. Both papers study a feedback
interconnection of two ISS systems as represented in
Figure~\ref{feedbackfig}.

\begin{figure}[h]
\centering
      {\includegraphics[width=.5\columnwidth]{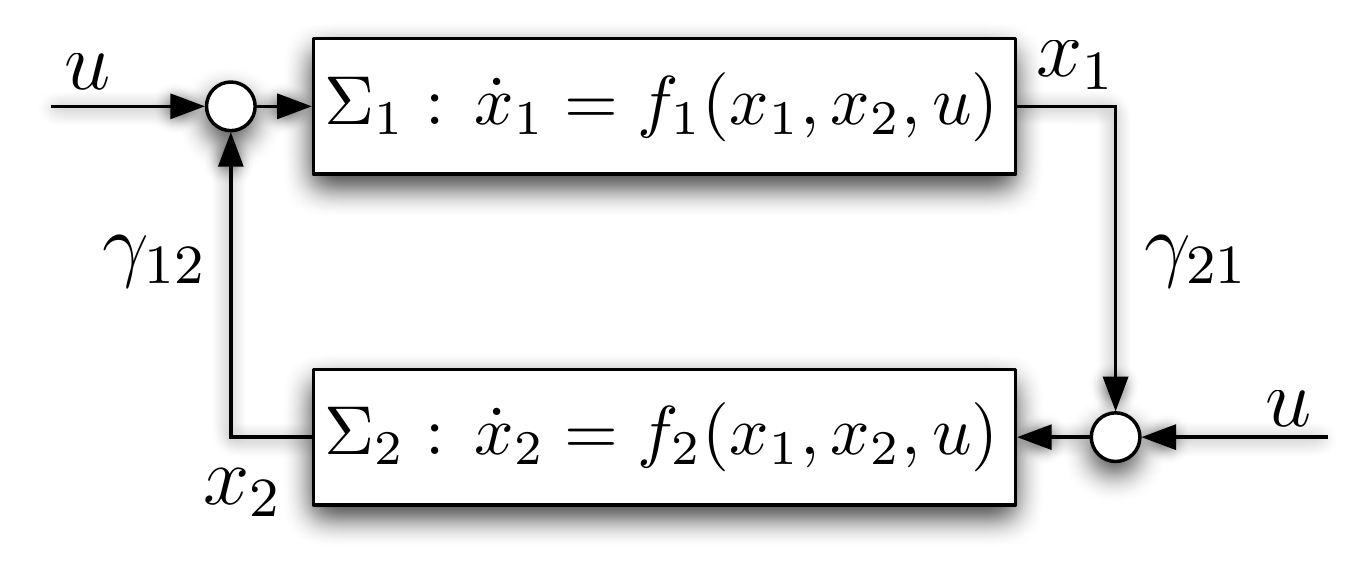}}
\caption{Feedback interconnection of two ISS systems with gains
$\gamma_{12}$ from $\Sigma_2$ to $\Sigma_1$ and $\gamma_{21}$ from
$\Sigma_1$ to $\Sigma_2$.} \label{feedbackfig}
\end{figure}

The small gain condition in \cite{JTP94} is that the composition
of the gain functions $\gamma_{12},\gamma_{21}$ is less than
identity in a robust sense.
We denote the composition of functions $f,g$ by $\circ$, that is, $(f\circ g)(x):=f(g(x))$.
The small gain condition then is that if on $(0,\infty)$ we have
\begin{equation}
(\id+\alpha_1) \circ \gamma_{12} \circ (\id+\alpha_2) \circ
\gamma_{21}<\id\,, \label{eq:1}
\end{equation}
for suitable $\Kinf$ functions $\alpha_1,\alpha_2$  then
the feedback system is ISS with respect to the external inputs.

In this paper we concentrate on the  equivalent definition of ISS
in terms of ISS Lyapunov functions \cite{SoW96}. The small gain
theorem for ISS Lyapunov functions from \cite{JMW96} states that
if on $(0,\infty)$ the small gain condition
\begin{equation}
\gamma_{12}\circ\gamma_{21}<\id\label{eq:2}
\end{equation}
is satisfied then an ISS Lyapunov function may be explicitly
constructed as follows. Condition \eqref{eq:2} is equivalent to
$\gamma_{12} < \gamma_{21}^{-1}$ on $(0,\infty)$. This permits to
construct a function $\sigma_2\in\Kinf$ such that $\gamma_{21} <
\sigma_2 < \gamma_{12}^{-1}$ on $(0,\infty)$, see
Figure~\ref{figtwosigmas}. An ISS Lyapunov function is then
defined by scaling and taking the maximum, that is, by setting
$V(x) = \max\{V_1(x_1), \sigma_2^{-1}(V_2(x_2)) \}$.
This ISS Lyapunov function describes stability properties of the
whole interconnection. In particular, given an input $u$, 
it can be seen
how fast the corresponding trajectories converge to the
neighborhood and how large this neighborhood is.
\begin{figure}[h]
\centering
\includegraphics[width=.4\columnwidth]{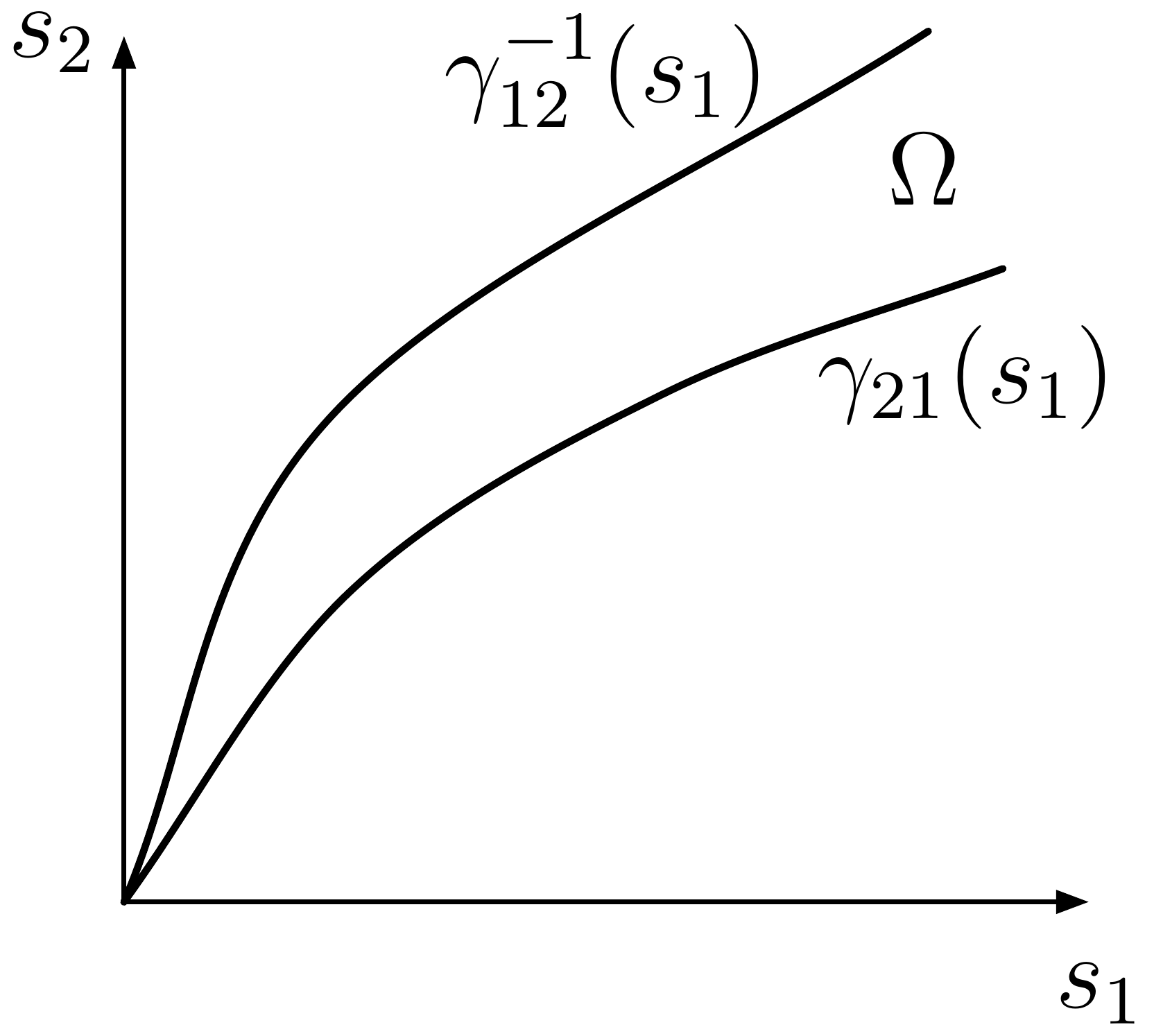}
\caption{Two gain functions satisfying \eqref{eq:2}.}
\label{figtwosigmas}
\end{figure}

At first sight the difference between the small gain conditions in
\eqref{eq:1} from \cite{JTP94} and \eqref{eq:2} from \cite{JMW96}
appears surprising. This might lead to the impression that the
difference comes from studying the problem in a trajectory based
or Lyapunov based framework. This, however, is not the case; the
reason for the difference in the conditions is a result of the
formulation of the ISS condition. In \cite{JTP94} a summation
formulation was used for the trajectory based case. In the maximization formulation 
of the trajectory case the small gain condition is again
\eqref{eq:2}, \cite{DRW-mcss}. In \cite{JMW96} the Lyapunov formulation
is investigated using maximization, the corresponding result for summation
is Corollary~\ref{sumcor} below requiring condition \eqref{eq:1}.

In order to generalize the existing results it is useful to
reinterpret the approach of \cite{JMW96}: note that the gains may
be used to define a matrix
\[ \Gamma := \begin{pmatrix}
0 & \gamma_{12}\\
\gamma_{21} & 0
\end{pmatrix}\,,\]
which defines in a natural way a monotone operator on $\R_+^2$. In
this way an alternative characterization of the area between
$\gamma_{21}$ and $\gamma_{12}^{-1}$ in Figure~\ref{figtwosigmas}
is that it is the area where $\Gamma(s) < s$ (with respect to the
natural ordering in $\R_+^2$). Thus the problem of finding
$\sigma_2$ may be interpreted as the problem of finding a path
$\sigma : r \mapsto (r,\sigma_2(r)), r \in (0, \infty)$ such that
$\Gamma \circ \sigma < \sigma$.

We generalize this constructive procedure for a Lyapunov function
in several directions. First the number of subsystems entering the
interconnection will be arbitrary. Secondly, the way in which the
gains of subsystem $i$ affect subsystem $j$ will be formulated in
a general manner using the concept of {\em monotone aggregation
functions}. This class of functions allows for a unified treatment
of summation, maximization or other ways of formulating ISS
conditions. Following the matrix interpretation this leads to a
monotone operator $\Gamma_\mu$ on $\Rnp$. The crucial thing to
find is a sufficiently regular path $\sigma$ such that $\Gamma_\mu
\circ \sigma < \sigma$. This allows for a scaling of the Lyapunov
functions for the individual subsystems to obtain one for the
large scale system.

Small gain conditions on $\Gamma_\mu$ as in
\cite{DRW05-cdcecc,DRW-mcss} yield sufficient conditions that
guarantee that the construction of $\sigma$ can be performed. 
\label{revision:commenting-on-differences-to-MCSS-and-CDCECC}
However, in \cite{DRW05-cdcecc,DRW-mcss} the trajectory
formulation of ISS has been studied, and the main technical
ingredient was, essentially, to prove bounds on $(\id
-\Gamma_{\mu})^{-1}$. The sufficient condition for the existence
of the path $\sigma$ turns out to be the same, but the path itself
had not been used in \cite{DRW05-cdcecc,DRW-mcss}. In fact, the
line of argument used there is completely different. 
It is shown in \cite{07-R-DISS} that the results of
\cite{DRW-mcss} also hold for the more general ISS formulation
using monotone aggregation functions. The condition requires
essentially that the operator is not greater or equal to the identity
in a robust sense.  The construction of $\sigma$ then relies on a
rather delicate topological argument. What is obvious for the
interconnection of two systems is not that clear in higher
dimensions. It can be seen that the small gain condition imposed
on the interconnection is actually a sufficient condition that
allows for the application of the Knaster-Kuratowski-Mazurkiewicz
theorem, see \cite{DRW-mcss,07-R-DISS} for further details. We
show in Section~\ref{sec:remarks-case-three} how the construction
works for three subsystems, but it is fairly clear that this
methodology is not something one would like to carry out in higher
dimensions. 
In the maximization formulation a viable alternative is the
approach pursued by \cite{KarafyllisJiang:5:A-Vector-Small-Gain-Theorem-for-General-:}.

The construction of the Lyapunov function is explicit once the
scaling function $\sigma$ is known. Thus to have a really
constructive procedure a way of constructing $\sigma$ is required.
We do not study this problem here, but note that based on an
algorithm by Eaves \cite{Eav72} it actually possible to turn this
mere existence result into a (numerically) constructive method
\cite{07-R-DISS,DRW-CDC07}. Using the algorithm by Eaves and the
technique of
Proposition~\ref{prop:psi-pathwise-connected-AND-finite-length-Omega-path},
it is then possible to construct such a vector function (but of
finite-length) numerically, see \cite[Chapter 4]{07-R-DISS}. This
will be treated in more detail in future work.

The paper is organized as follows. The next section introduces the
necessary notation and basic definitions, in particular the notion
of monotone aggregation functions (MAFs) and different
formulations of ISS. Section~\ref{examples} gives some motivating
examples that also illustrate the definitions of the
Section~\ref{sec:preliminaries} and explain how different MAFs
occur naturally for different problems. In
Section~\ref{sec:monot-oper-gener} we introduce small gain
conditions given in terms of monotone operators that naturally
appear in the definition of ISS.
Section~\ref{sec:lyapunov-functions} contains the main results,
namely the existence of the vector scaling function $\sigma$ and
the construction of an ISS Lyapunov function. In this section we
concentrate on strongly connected networks which are easier to
deal with from a technical point of view. Once this case has been
resolved it is shown in Section~\ref{sec:reducible} how simply
connected networks may be treated by studying the strongly
connected components.

The actual construction of $\sigma$ is given in
Section~\ref{sec:pathconstruction} to postpone the topological
considerations until after applications to interconnected ISS
systems have been considered in
Section~\ref{sec:appl-gener-small}. Since the topological
difficulties can be avoided in the case $n=3$ we treat this case
briefly in Section~\ref{sec:remarks-case-three} to show a simple
construction for $\sigma$. Section~\ref{sec:conclusions} concludes
the paper.

\section{Preliminaries}
\label{sec:preliminaries}
\subsection{Notation and conventions}
Let $\R$ be the field of real numbers and $\R^n$ the vector space
of real column vectors of length $n$. We denote the set of
nonnegative real numbers by $\R_+$ and $\Rnp:= (\R_+)^n$ denotes
the positive orthant in $\R^n$.
On $\Rnp$ the standard partial order is defined as follows. For vectors $v,w\in\R^n$ we
denote
\begin{gather*}
v\geq w:\!\iff  v_i\geq w_i\mbox{ for }i=1,\ldots,n,\\
v> w :\!\iff v_i> w_i \mbox{ for }i=1,\ldots,n,\\
v\gneqq w :\!\iff v\geq w\mbox{ and }v\ne w.
\end{gather*}
The maximum of two vectors or matrices is to be understood 
component-wise.
By $|\cdot|$ we denote the 1-norm on $\R^n$ and by $S_r$ the
induced sphere of radius $r$ in $\R^n$ intersected with \Rnp,
which is an $(n-1)$-simplex. On $\Rnp$ we denote by
$\pi_I:\Rnp\to\R^{\#I}_+$ the
\emph{projection}\index{projection}\index{1p@$\pi_I$} of the
coordinates in $\Rnp$ corresponding to the indices in
$I\subset\{1,\ldots,n\}$ onto $\R^{\#I}$.

The standard scalar product in $\Rn$ is denoted by $\langle
\cdot,\cdot \rangle$.  By $U_\varepsilon(x)$ we denote the open
ball of radius $\varepsilon$ around $x$ with respect to the
Euclidean norm $\|\cdot\|$. The induced operator norm, i.e. the
spectral norm, of matrices is also denoted by $\|\cdot\|$.

The space of measurable and essentially bounded functions is
denoted by $L^\infty$ with norm $\|\cdot\|_\infty$\,. To state the
stability definitions that we are interested in three
sets of comparison functions are used: $\K = \{ \gamma : \Rp\to\Rp, \gamma$
is continuous, strictly increasing, and $\gamma(0)=0 \}$ and
$\Kinf=\{ \gamma\in\K: \gamma\mbox{ is unbounded}\}$.  A function
$\beta:\Rp\times\Rp\to\Rp$ is of class $\mathcal{KL}$, if it is of
class $\K$ in the first argument and strictly decreasing to zero
in the second argument.  We will call a function $V: \R^N \to
\R_+$ {\em proper and positive definite} if there are
$\psi_1,\psi_2\in \Kinf$ such that
\begin{equation*}
\psi_1(\| x\|) \leq V(x) \leq \psi_2(\| x \|) \,, \quad \forall x
\in \R^N\,.
\end{equation*}
A function $\alpha: \R_+ \to \R_+$ is called {\em positive
definite} if it is continuous and satisfies $\alpha(r) = 0$ if and
only if $r=0$.

\subsection{Problem Statement}

We consider a finite set of interconnected systems with state
$x=\begin{pmatrix} x_1^T, \ldots, x_n^T \end{pmatrix}^T$, where
$x_i \in \R^{N_i}$, $i=1,\ldots,n$ and $N:=\sum N_i$. For
$i=1,\ldots,n$ the dynamics of the $i$-th subsystem is given by
\begin{equation}
\label{eq:3}
\Sigma_i:~\dot x_i = f_i( x_1,\ldots,x_n,u),
\quad x\in\R^{N},\;u\in\R^{M},\; f_i:\R^{N+M}\to\R^{N_i} \,.
\end{equation}

For each $i$ we assume unique existence of solutions and forward
completeness of $\Sigma_i$ in the following sense.  If we
interpret the variables $x_j$, $j\ne i$, and $u$ as unrestricted
inputs, then this system is assumed to have a unique solution
defined on $[0,\infty)$ for any given initial condition $x_i(0)\in
\R^{N_i}$ and any $L^\infty$-inputs $x_j:[0,\infty)\to\R^{N_j},
j\ne i$, and $u:[0,\infty)\to\R^M$. This can be guaranteed for
instance by suitable Lipschitz and growth conditions on the $f_i$.
It will be no restriction to assume that all systems have the same
(augmented) external input $u$.

We write the interconnection of subsystems \eqref{eq:3} as
\begin{equation}
\label{eq:4}
\Sigma:~\dot x = f(x,u), \quad f:\R^{N+M}\to\R^{N}.
\end{equation}
\begin{figure}[h]
\centering
\includegraphics[width=.8\columnwidth]{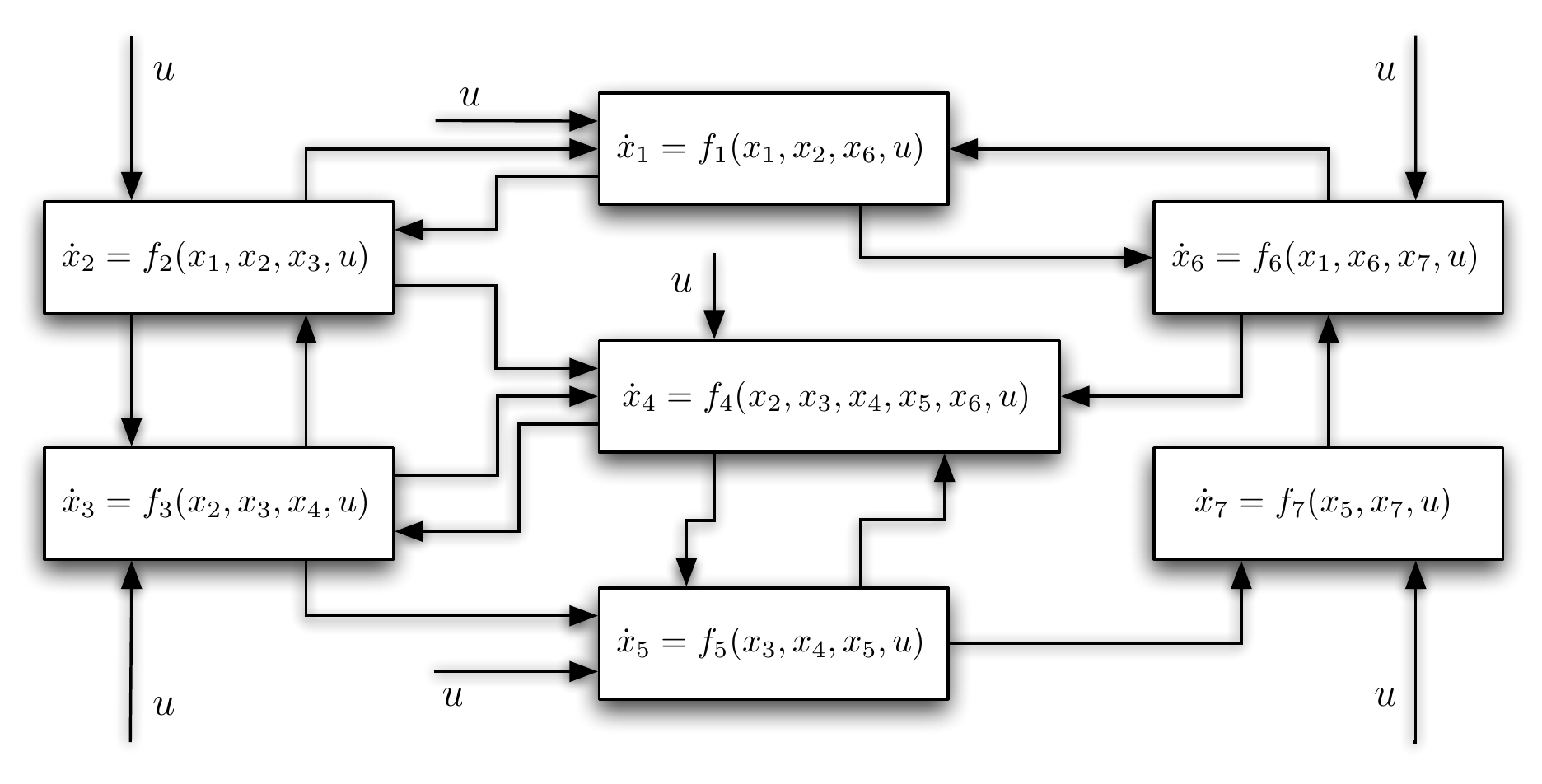}
\caption{%
  An example of a network of interconnected systems and the
  associated graph.}
\label{fig:networkfigure}
\end{figure}
Associated to such a network is a directed graph, with vertices
representing the subsystems and where the directed edges $(i,j)$
correspond to inputs going from system $j$ to system $i$, see
Figure \ref{fig:networkfigure}.  We will call the network strongly
connected if its interconnection graph has the same property.

For networks of the type that has been just described we wish to
construct Lyapunov functions as they are introduced now.

\subsection{Stability}
\label{subsec:stability}

An appropriate stability notion to study nonlinear systems with
inputs is input-to-state stability, introduced in \cite{Son89}.
The standard definition is as follows.

A forward complete system $\dot{x} = f(x,u)$ with $x\in \R^N, u\in
\R^M$ is called input-to-state stable if there are $\beta \in
\KL$, $\gamma\in \K$ such that for all initial conditions $x_0 \in
\R^{N}$ and all $u\in L^\infty(\R_+,\R^M)$ we have
\begin{equation}
\label{basiciss} \norm{x(t;x_0,u(\cdot))} \leq \beta(\|x_0\|,t) +
\gamma(\norm{u}_\infty) \,.
\end{equation}
It is known to be an equivalent requirement to ask for the
existence of an ISS Lyapunov function, \cite{SoW95}. These
functions can be chosen to be smooth. For our purposes, however,
it will be more convenient to have a broader class of functions
available for the construction of a Lyapunov function. Thus we
will call a function a {\em Lyapunov
function candidate}, if the following assumption is met.

\begin{assumption}
\label{A1}
The function $V: \R^N \to \R_+$ is continuous, proper and positive
definite and locally Lipschitz continuous on $\R^N \setminus \{0\}$.
\end{assumption}
Note that by Rademacher's Theorem (e.g., \cite[Theorem 5.8.6,
p.281]{Eva98}) locally Lipschitz continuous functions on $\R^N \setminus \{0\}$ are differentiable
almost everywhere in $\R^N$.

\begin{definition}
\label{def:Lipschitz-ISS-Lf}
We will call a function satisfying Assumption~\ref{A1} an
ISS Lyapunov function for $\dot{x} = f(x,u)$, if there exist
$\gamma\in\K$, and a positive definite function $\alpha$ such that
in all points of differentiability of $V$ we have
\begin{equation}
  \label{basicissLF}
  V(x) \geq  \gamma(\norm{u}) \quad \implies \quad \nabla V(x) f(x,u) \leq
  -\alpha(\|x\|)\,.
\end{equation}
\end{definition}
ISS and ISS Lyapunov functions are related in the expected manner:
\begin{theorem}
\label{thm:equivalence-ISS-and-nonsmooth-ISS Lyapunov-function}
A system is ISS if and only if it admits an ISS Lyapunov function in
the sense of Definition~\ref{def:Lipschitz-ISS-Lf}.
\end{theorem}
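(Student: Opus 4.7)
\medskip

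\noindent\textbf{Plan.} The forward implication is immediate. By the Sontag--Wang equivalence \cite{SoW95}, any ISS system admits a smooth ISS Lyapunov function $V$ with $\nabla V(x)f(x,u) \le -\alpha(\|x\|)$ whenever $V(x)\ge \gamma(\|u\|)$ for appropriate $\alpha$ (positive definite) and $\gamma\in\K$. Such a $V$ is automatically continuous, proper, positive definite and locally Lipschitz (in fact $C^\infty$) on $\R^N$, hence satisfies Assumption~\ref{A1} and Definition~\ref{def:Lipschitz-ISS-Lf}.

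For the converse, fix a Lyapunov function candidate $V$ satisfying \eqref{basicissLF}, an initial value $x_0$, and an input $u\in L^\infty(\R_+,\R^M)$. Let $x(\cdot)$ denote the corresponding (forward complete) trajectory. The aim is to derive the trajectory estimate \eqref{basiciss} from the infinitesimal decrease condition \eqref{basicissLF}. Since $V$ is locally Lipschitz on $\R^N\setminus\{0\}$ and $x(\cdot)$ is absolutely continuous, the composition $t\mapsto V(x(t))$ is absolutely continuous on any interval where $x(t)$ stays away from the origin, and therefore differentiable at a.e.\ such~$t$. What needs to be established is the pointwise chain-rule inequality
\begin{equation*}
\frac{d}{dt}V(x(t)) \le -\alpha(\|x(t)\|)
\qquad\text{whenever}\qquad V(x(t))\ge \gamma(\|u(t)\|),
\end{equation*}
holding at a.e.\ such $t$. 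Once this is in hand, a routine comparison-function argument shows that $V(x(t))$ decays in a \KL-fashion until it enters the sublevel set $\{V\le\gamma(\|u\|_\infty)\}$, which it cannot leave; combining this with the bounds $\psi_1(\|x\|)\le V(x)\le\psi_2(\|x\|)$ from Assumption~\ref{A1} yields \eqref{basiciss} with suitable $\beta\in\KL$ and $\tilde\gamma=\psi_1^{-1}\circ\gamma\in\K$.

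The central obstacle is precisely the chain-rule step. The naive identity $\frac{d}{dt}V(x(t))=\nabla V(x(t))f(x(t),u(t))$ cannot be asserted pointwise a.e.\ from the hypothesis alone, since \eqref{basicissLF} is only assumed at the full-measure set of differentiability points of $V$ in $\R^N$, and the trajectory $x(\cdot)$ might spend positive time in the complementary null set. There are two standard ways around this that I would deploy. The first is to invoke Clarke's nonsmooth calculus: for locally Lipschitz $V$, one has $\frac{d}{dt}V(x(t))\in\langle\partial_C V(x(t)),f(x(t),u(t))\rangle$ at a.e.\ $t$, and $\partial_C V(x)$ is the closed convex hull of limits of $\nabla V$ at nearby points of differentiability, so the a.e.\ inequality \eqref{basicissLF} transfers to every element of $\partial_C V(x)$. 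The second is mollification: convolve $V$ with a smooth approximate identity $\eta_\epsilon$ to obtain $V_\epsilon\in C^\infty$, use the Lipschitz bound to show $V_\epsilon$ is also an ISS Lyapunov function with gain $\gamma_\epsilon\to\gamma$ and decay rate $\alpha_\epsilon\to\alpha$ as $\epsilon\to 0$, apply the classical smooth result to $V_\epsilon$, and let $\epsilon\searrow 0$. Either route, combined with the standard \KL-estimate argument sketched above, completes the proof.
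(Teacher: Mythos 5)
Your proposal is correct and follows essentially the same route as the paper, which likewise obtains the hard direction from the smooth converse Lyapunov theorem of Sontag--Wang \cite{SoW95} and handles sufficiency by passing to the Clarke generalized gradient formulation \eqref{basicissLFclarke} and arguing along trajectories with nonsmooth calculus as in \cite{CLSW98}. The paper only sketches this by citation, so your fleshed-out chain-rule and comparison argument (with mollification as a fallback) is a faithful elaboration rather than a different method.
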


This has been proved for smooth ISS Lyapunov functions in the
literature \cite{SoW95}. So the hard converse statement is clear,
as it is even possible to find smooth ISS Lyapunov functions,
which satisfy Definition~\ref{def:Lipschitz-ISS-Lf}. The
sufficiency proof for the Lipschitz continuous case goes along the
lines presented in \cite{SoW95,SoW96} using the necessary tools
from nonsmooth analysis, cf.~\cite[Theorem. 6.3]{CLSW98}.

Merely continuous ISS Lyapunov functions have been studied in
\cite[Ch.~3]{Gru02b}, arising as viscosity supersolutions to
certain partial differential inequalities. Here we work with the
Clarke generalized gradient $\partial V(x)$ of $V$ at $x$. For
functions $V$ satisfying Assumption~\ref{A1} Clarke's generalized
gradient satisfies for $x\neq 0$ that
\begin{equation}
\label{Clarke-def}
\partial V(x) = \conv \left\{ \zeta \in \R^n \;:\; \exists x_k \to x : \nabla V(x_k) \text{ exists and } \nabla V(x_k) \to \zeta \right\} \,.
\end{equation}
An equivalent formulation to \eqref{basicissLF} is given by
\begin{equation}
  \label{basicissLFclarke}
  V(x) \geq  \gamma(\norm{u}) \quad \implies \quad \forall \zeta \in \partial V(x) \colon \langle \zeta, f(x,u) \rangle\leq
  -\alpha(\|x\|)\,.
\end{equation}
Note that \eqref{basicissLFclarke} is also applicable in points
where $V$ is not differentiable.

The gain $\gamma$ in \eqref{basiciss} is in general different from
the ISS Lyapunov gain in \eqref{basicissLF}.  
In the sequel we will always
assume that gains are of class $\Kinf$.

\subsection{Monotone aggregation}
\label{sec:monotone-aggregation}

In this paper we concentrate on the construction of ISS Lyapunov
functions for the interconnected system $\Sigma$. For a single
subsystem~\eqref{eq:3}, in a similar manner to \eqref{basicissLF},
we wish to quantify the combined effect of the inputs $x_j$,
$j\neq i$, and $u$ on the evolution of the state $x_i$. As we will
see in the examples given in Section~\ref{examples} it depends on
the system under consideration how this combined effect can be
expressed, through the sum of individual effects, using the
maximum of individual effects or by other means.  In order to be
able to give a general treatment of this we introduce the notion
of {\em monotone aggregation functions} (MAFs).

\begin{definition}
A continuous function $\mu : \Rnp \to \R_+$ is called a monotone
aggregation function if the following three properties hold
\begin{itemize}
\item[(M1)] positivity: $\mu(s) \geq 0$ for all $s \in \Rnp$ and
  $\mu(s) > 0$ if $s\gneqq0$;
\item[(M2)] strict increase\footnote{%
     Cf. Assumption~\eqref{standard-assumption-on-compatibility-MAF-and-Gamma},
     where for the purposes of this paper (M2) is further
     restricted.}: if $x<y$, then $\mu(x) < \mu(y)$;
\item[(M3)] unboundedness: if $\|x\|\to\infty$ then
  $\mu(x)\to\infty$.
\end{itemize}
The space of monotone aggregation functions is denoted by $\MAF_n$
and $\mu\in\MAF_n^m$ denotes a vector $\MAF$, i.e., $\mu_i\in\MAF_n$,
for $i=1,\ldots,m$.
\end{definition}

A direct consequence of (M2) and continuity is the weaker
monotonicity property
\begin{itemize}
 \it
\item[(M2')] monotonicity:
 $x\leq y \implies \mu(x) \leq \mu(y)$.
\end{itemize}
In \cite{07-R-DISS,R08-positivity} MAFs have additionally been
required to satisfy another property,
\begin{enumerate}
 \it
\item[(M4)] subadditivity: $\mu(x+y)\leq\mu(x)+\mu(y)$,
\end{enumerate}
which we do not need for the constructions provided in this paper,
since we take a different approach, see
Section~\ref{sec:reducible}.

Standard examples of monotone aggregation functions satisfying
(M1)---(M4) are
$$
\begin{aligned}
 \mu(s) = \sum_{i=1}^n s_i^{l}, \mbox{ where }l\geq1, \quad \text{ or
 }\quad \mu(s) = \max_{i=1,\ldots,n} s_i \quad \text{ or }\\
 \quad
 \mu(s_{1},s_{2},s_{3},s_{4}) = \max\{s_{1},s_{2}\} + \max\{s_{3},s_{4}\}
 \,.
\end{aligned}
$$
On the other hand, the following function is not a MAF, since (M1)
and (M3) are not satisfied; $\nu(s) = \prod_{i=1}^{n}s_{i}$.

Using this definition we can define a notion of ISS Lyapunov
function for systems with multiple inputs.
In this case $\Sigma_i$ in \eqref{eq:3} will have several gains
$\gamma_{ij}$ corresponding to the inputs $x_j$. For notational
simplicity, we will include the gain $\gamma_{ii}\equiv0$
throughout this paper.  The following definition requires only
Lipschitz continuity of the Lyapunov function.

\begin{definition}
\label{ISSdef}
Consider the interconnected system \eqref{eq:4} and assume that for
each subsystem $\Sigma_j$ there is a given function $V_j:\R^{N_j}
\to \R_+$ satisfying Assumption~\ref{A1}.

For $i=1,\ldots,n$ the function $V_i : \R^{N_i} \to \R_+$ is called
an ISS Lyapunov function for $\Sigma_i$, if there exist $\mu_i \in
\MAF_{n+1}, \gamma_{ij} \in \Kinf \cup \{ 0\}, j\neq i,
\gamma_{iu}\in \K \cup\{0\}$, and a positive definite function
$\alpha_i$ such that
at all points of differentiability of $V_{i}$
\begin{equation}
  \label{ISScond}
  \begin{split} V_i(x_i) \geq \mu_i\left( \gamma_{i1}(V_1(x_1)),\ldots,\gamma_{in}(V_n(x_n)),
      \gamma_{iu}(\norm{u})\right)  \\ \implies \quad \nabla V_i(x_i)
    f_i(x,u) \leq - \alpha_i(\norm{x_i}) \,.
  \end{split}
\end{equation}
The functions $\gamma_{ij}$ and $\gamma_{iu}$ are called ISS Lyapunov
gains.
\end{definition}

Several examples of ISS Lyapunov functions are given in the next
section.

Let us call $x_j$, $j\ne i$, the \emph{internal inputs} to
$\Sigma_i$ and $u$ the \emph{external input}.  Note that the role
of functions $\gamma_{ij}$ and $\gamma_{iu}$ is essentially to
indicate whether there is any influence of different inputs on the
corresponding state.  In case $f_i$ does not depend on $x_j$ there
is no influence of $x_j$ on the state of $\Sigma_i$. In this case
we define $\gamma_{ij}\equiv 0$,
in particular always $\gamma_{ii}\equiv 0$. This allows us to
collect the internal gains into a matrix
\begin{equation}
 \Gamma:=(\gamma_{ij})_{i,j=1,\dots,n}\,.\label{eq:5}
\end{equation}
If we add the external gains as the last column into this matrix
then we denote it by $\ol\Gamma$.  The function $\mu_i$ describes
how the internal and external gains interactively enter in a
common influence on $x_i$. The above definition motivates the
introduction of the following nonlinear map
\begin{equation}
 \label{Gammadef}
 \ol{\Gamma}_\mu :
 \R^{n+1}_+  \to    \Rnp,\quad
 \begin{bmatrix}
   s_1 \\ \vdots \\ s_n \\ r
 \end{bmatrix}
 \mapsto
 \begin{bmatrix}
   \mu_1(\gamma_{11}(s_1),\ldots,\gamma_{1n}(s_n), \gamma_{1u}(r)) \\ \vdots \\
   \mu_n(\gamma_{n1}(s_1),\ldots,\gamma_{nn}(s_n),\gamma_{nu}(r))
 \end{bmatrix}\,.
\end{equation}
Similarly we define $\Gamma_\mu(s):=\ol{\Gamma}_\mu(s,0)$.  The
matrices $\Gamma$ and $\ol\Gamma$ are from now on referred to as
\emph{gain matrices}, $\Gamma_\mu$ and $\ol\Gamma_\mu$ as
\emph{gain
 operators}.

\begin{remark}[general assumption]
 \label{genass}
 Given $\Gamma\in\Kinfnn$ and $\mu\in\MAF^{n}$, we will from now on assume
 that $\Gamma$ and $\mu$ are \emph{compatible} in the following sense:
 For each $i=1,\ldots,n$, let $I_{i}$ denote the set of indices
 corresponding to the nonzero entries in the $i$th row of
 $\Gamma$. Then it is understood that also the restriction of $\mu_{i}$
 to the indices $I_{i}$ satisfies (M2), i.e.,
 \begin{equation}
   \label{standard-assumption-on-compatibility-MAF-and-Gamma}
  \mu_{i}(x|_{I_{i}}) <
   \mu_{i}(y|_{I_{i}}) \quad\text{~if~}\quad
   x|_{I_{i}} < y|_{I_{i}}\,.
 \end{equation}
 In particular we assume that the function
 $$
 s\mapsto \mu(s_{1},\ldots,s_{n}, 0),\quad s\in\Rnp,
 $$
 for $\mu\in\MAF_{n+1}$ satisfies (M2).  Note that (M1) and (M3) are
 automatically satisfied.
\end{remark}

The examples in the next section show explicitly how the
introduced functions, matrices and operators may look like for
some particular cases. Clearly, the gain operators will have to
satisfy certain conditions if we want to be able to deduce
that~\eqref{eq:4} is ISS with respect to external inputs, see
Section~\ref{sec:lyapunov-functions}.

\section{Examples for monotone aggregation}
\label{examples} In this section we show how different MAFs may
appear in different applications, for further examples see
\cite{DRW-CDC08-applications}. We begin with a purely academic
example and discuss linear systems and neural networks later in
this section. Consider the system
 \begin{equation}
   \dot x=-x-2x^3+\frac{1}{2}(1+2x^2)u^2+\frac{1}{2}w\label{eq:6}
 \end{equation}
 where $x,u,w\in\R$. Take $V(x)=\frac{1}{2}x^2$ as a Lyapunov
 function candidate.
 It is easy to see that if $|x|\geq u^2$ and $|x|\geq |w|$ then
 $$
 \dot V \leq -x^2-2x^4+\frac{1}{2}x^2(1+2x^2)+\frac{1}{2}x^2 = -x^4 <0
 $$
 if $x\ne0$. The conditions $|x|\geq u^2$ and $|x|\geq |w|$
 translate into $|x|\geq\max\{u^2,|w|\}$ and in terms of $V$ this
 becomes
 $$
 V(x)\geq\max\{u^4/2, w^2/2\}
 \implies
 \dot V(x) \leq -x^4.
 $$
 This is a Lyapunov ISS estimate where the gains are aggregated using
 a maximum, i.e., in this case we can take
 $\mu(s_{1},s_{2})=\max\{s_{1},s_{2}\}$ and $\gamma_{u}(r)=r^4/2$ and
 $\gamma_{w}(r)=r^2/2$.

 Note that there is a certain arbitrariness in the choice of $\mu$ and $\gamma_{ij}$.
 \label{revision:discussion-of-nonuniqueness-of-mu-vs-gamma}
 In the example one could as well take $\gamma_{u}(r)=\gamma_{w}(r)=r$ and
 $\mu(s_{1},s_{2})=\max\{s_{1}^{4}/2,s_{2}^{2}/2\}$, giving exactly
 the same condition, but different gains and a different monotone
 aggregation function. At the end of the day the small gain condition
 comes down to mapping properties of $\Gamma_{\mu}$.
 Different choices of $\Gamma$ and $\mu$ may lead to the same operator $\Gamma_{\mu}$.
 However, as we will see at a later stage, certain choices of $\mu$ can be
 computationally more convenient than others. In particular, if we
 can choose $\mu=\max$, the task of checking the small gain condition
 reduces to checking a cycle condition, cf.\
 Section~\ref{sec:special-case:mu=max}.

\subsection{Linear systems}
\label{section:linear} Consider linear
interconnected systems
\begin{equation}
\label{linear-systems}
\Sigma_i:\;\dot x_i=A_i x_i+\sum_{j=1}^n \Delta_{ij}x_j+B_i
u_i,\quad i=1,\dots,n,
\end{equation}
with $x_i\in\R^{N_i},u_i\in\R^{M_i},$ and matrices
$A_i,B_i,\Delta_{ij}$ of appropriate dimensions. Each system
$\Sigma_i$ is ISS from
$(x_1^T,\dots,x_{i-1}^T,x_{i+1}^T,\dots,x_n^T,u_i^T)^T$ to $x_i$
if and only if $A_i$ is Hurwitz. It is known that $A_i$ is Hurwitz
if and only if for any given symmetric positive definite $Q_i$
there is a unique symmetric positive definite solution $P_i$ of
$A_i^T P_i + P_i A_i = - Q_i$, see, e.g., \cite[Cor.\ 3.3.47 and
Rem.\ 3.3.48,
p.284f]{HiP05}. Thus we choose the
Lyapunov function $V_i(x_i) = x_i^T P_i x_i$, where $P_i$ is the
solution corresponding to a symmetric positive definite $Q_i$. In
this case, along trajectories of the autonomous system
$$
\dot x_i=A_ix_i
$$
we have
$$
\dot V_i=x_i^T P_i A_ix_i+x_i^TA_i^T P_ix_i=-x_i^T Q_i x_i\leq
-c_i\|x_i\|^2
$$
for $c_i := \lambda_{min}(Q_i)>0$, the smallest eigenvalue of
$Q_i$. For system~\eqref{linear-systems} we obtain
\begin{align}
\nonumber
\dot V_i&=
2x_i^T P_i \Big(A_i x_i+\sum_{j\ne i} \Delta_{ij}x_j+B_i u_i\Big)
\\
\label{eq:7}
&\leq -c_i \|x_i\|^2 + 2 \|x_i\| \|P_i\| \Big( \sum_{j\ne i} \|\Delta_{ij}\|
\|x_j\| + \|B_i\| \|u_i\|\Big) \leq -\epsilon c_i \|x_i\|^2,
\end{align}
where the last inequality~\eqref{eq:7} is satisfied for a given
$0<\epsilon<1$ if
\begin{equation}
\label{eq:8}
\|x_i\|\geq \frac{2\|P_i\|}{c_i(1-\epsilon)}
\Big( \sum_{j\ne i} \|\Delta_{ij}\|
\|x_j\|  + \|B_i\| \|u\|\Big)
\end{equation}
with $u:=(u_1^T,\dots,u_n^T)^T$. To write this implication in the
form \eqref{ISScond} we note that $\lambda_{\min}(P_i)\|x_i\|^2\le
V_i(x_i)\le \lambda_{\max}(P_i)\|x_i\|^2$. Let us denote
$a_i^2=\lambda_{\min}(P_i)$, $b_i^2=\lambda_{\max}(P_i) =
\|P_i\|$, then the inequality \eqref{eq:8} is satisfied if
$$
\|P_{i}\|\cdot \|x_{i}\|^{2}\geq V_i(x_i)\ge \|P_i\|^3
\left(\frac{2}{c_i(1-\epsilon)}\right)^2
\left(
\sum_{j\ne i}
\frac{\|\Delta_{ij}\|}{a_j}
\sqrt{V_j(x_j)}  +
\|B_i\|
\|u\|
\right)^2\,.
$$
This way we see that the function $V_i$ is an ISS Lyapunov
function for $\Sigma_i$ with gains given by
$$
\gamma_{ij}(s)= \left(
\frac{2b_i^3}{c_i(1-\epsilon)}
\frac{\|\Delta_{ij}\|}{a_j}\right)\
\sqrt{s}
$$
for $i=1,\dots,n$, $i\ne j$, and
$$
\gamma_{iu}(s)=  \frac{2\|B_i\|b_i^3}{c_i(1-\epsilon)} \ s,
$$
for $i=1,\ldots,n$, and $s\geq0$.  Further we have
$$
\mu_i(s,r)=\left(\sum_{j=1}^{n}s_j+r\right)^2
$$
for $s\in\R^{n}_+$ and $r\in\R_+$. This $\mu_i$ satisfies (M1),
(M2), and (M3), but not (M4).  By defining $\gamma_{ii}\equiv 0$
for $i=1,\dots,n$ we can write
$$
\ol\Gamma=\left(
\begin{array}{ccccc}
  0 & \gamma_{12} & \cdots & \gamma_{1n} & \gamma_{1u}\\
  \gamma_{21} & \ddots & \cdots & \gamma_{2n} & \gamma_{2u}\\
  \vdots & & \ddots & \vdots &\vdots\\
  \gamma_{n1} & \cdots & \gamma_{n,n-1} & 0 & \gamma_{nu}\\
\end{array}\right)
$$
and have
\begin{equation}
  \ol\Gamma_{\mu}(s,r)=
  \begin{pmatrix}
    \Big(\frac{2b_1^3}{c_1(1-\epsilon)}\Big)^2\Big(\sum_{j\ne 1} \frac{\|\Delta_{1j}\|}{a_j}\sqrt{s_j}+\|B_1\|r\Big)^2\\
    \vdots\\
    \Big(\frac{2b_n^3}{c_n(1-\epsilon)}\Big)^2\Big(\sum_{j\ne n} \frac{\|\Delta_{nj}\|}{a_j}\sqrt{s_j}+\|B_n\|r\Big)^2\\
  \end{pmatrix}.
\label{eq:9}
\end{equation}
Interestingly, the choice of quadratic Lyapunov functions for the
subsystems naturally leads to a nonlinear mapping
$\ol{\Gamma}_\mu$ with a useful homogeneity property, see Proposition~\ref{prop:homogen}.

\subsection{Neural networks}
\label{section:neural networks}

As the next example consider a Cohen-Grossberg neural network as in \cite{WaZ02}. The
dynamics of each neuron is given by
\begin{equation}\label{eq:neural network}
{\mathrm{NN}_i}:~
\dot x_i(t)=-a_i(x_i(t))\Big(b_i(x_i(t))-\sum_{j=1}^n
t_{ij}s_j(x_j(t))+J_i\Big),
\end{equation}
$ i=1,\dots,n,\;n\ge2$, where $x_i$ denotes the state of the
$i$-th neuron and $a_i$ is a strictly positive amplification
function. 
As in \cite{WaZ02} we assume that the fixed point is shifted to the origin.
Then the function $b_i$ typically satisfies the sign condition
$b_i(x_i)x_i \geq 0$
and satisfies furthermore $|b_i(x_i)|>\tilde b_i(|x_i|)$ for some
$\tilde b_i\in\Kinf$. The activation function $s_i$ is typically
assumed to be sigmoid. The matrix $T=(t_{ij})_{i,j=1,\dots,n}$
describes the interconnection of neurons in the network and $J_i$
is a given constant input from outside. However for our
consideration we allow $J_i$ to be an arbitrary measurable
function in $L_\infty$.

In applications the matrix $T$
\label{revision:added-sentence-about-T} is usually the result of
training using some learning algorithm and appropriate training
data. The specifics depend on the type of network architecture and
learning algorithm chosen and on the particular application. Such
considerations are beyond the scope of the current paper. 
We simply assume that $T$ is given and concern
ourselves solely with stability considerations.

Note that for any sigmoid function $s_i$ there exists a
$\gamma_i\in\K$ such that $|s_i(x_i)|<\gamma_i(|x_i|)$. Following
\cite{WaZ02} we assume
$0<\underline\alpha_i<a_i(x_i)<\ol\alpha_i$,
$\underline\alpha_i,\ol\alpha_i\in\R$.

Recall the triangle inequality for $\Kinf$-functions: For any
$\gamma,\rho\in\Kinf$ and any $a,b\ge0$ it holds
$$\gamma(a+b)\le\gamma\circ(\id+\rho)(a)+\gamma\circ(\id+\rho^{-1})(b).$$

We claim that $V_i(x_i):=|x_i|$ is an ISS Lyapunov function for ${\mathrm{NN}}_i$ in \eqref{eq:neural network}.
Fix an arbitrary 
function
$\rho\in\Kinf$ and some $\varepsilon$ satisfying
$\underline\alpha_i>\varepsilon>0$.
Then by the triangle inequality we have
\begin{multline*}
 |x_i|>\tilde
 b_i^{-1}\circ(\id+\rho)\left(\frac{\overline\alpha_i}{\underline\alpha_i-\varepsilon}
   \sum_{j=1}^n|t_{ij}|\gamma_j(|x_j|)\right)
 +\tilde b_i^{-1}\circ(\id+\rho^{-1})\left(\frac{\overline\alpha_i}{\underline\alpha_i-\varepsilon}|J_i|\right)\\
\tilde
 b_i^{-1}\left(\frac{\overline\alpha_i}{\underline\alpha_i-\varepsilon}
   \bigg(\sum_{j=1}^n|t_{ij}|\gamma_j(|x_j|)+|J_i|\bigg)\right)\\
 \implies
 \dot V_i=-a_i(x_i)\Big(|b_i(x_i)|-\mbox{sign}\,x_i\sum_{j=1}^n t_{ij}s_j(x_j)+\mbox{sign}\,x_iJ_i\Big)
 <-\varepsilon|b_i(x_i)|\,.
\end{multline*}
In this case we have
$$
\mu_i(s,r)\\=\tilde b_i^{-1}\circ(\id+\rho)(s_1+\dots+s_n)+\tilde
b_i^{-1}\circ(\id+\rho^{-1})(r)
$$
additive with respect to the external input and
$$
\gamma_{ij}=\frac{\overline\alpha_i|t_{ij}|}{\underline\alpha_i-\varepsilon}\gamma_j(|x_j|),\quad
\gamma_{iu}=\frac{\overline\alpha_i\id}{\underline\alpha_i-\varepsilon}.
$$
The MAF $\mu_i$ satisfies (M1), (M2), and (M3).  It satisfies (M4)
if and only if $(\tilde b_i)^{-1}$ is subadditive.

\section{Monotone Operators and generalized small gain conditions}
\label{sec:monot-oper-gener}

In Section~\ref{sec:monotone-aggregation} we saw that in the ISS
context the mutual influence between subsystems~\eqref{eq:3} and
the influence from external inputs to the subsystems can be
quantified by the gain matrices $\Gamma$ and $\ol\Gamma$ and gain
operators $\Gamma_{\mu}$ and $\ol\Gamma_{\mu}$.  The
interconnection structure of the subsystems naturally leads to a
weighted, directed graph, where the weights are the nonlinear gain
functions, and the vertices are the subsystems. There is an edge
from the vertex $i$ to the vertex $j$ if and only if there is an
influence of the state $x_i$ on the state $x_j$, i.e., there is a
nonzero gain $\gamma_{ji}$.

Connectedness properties of the interconnection graph together
with mapping properties of the gain operators will yield a
generalized small gain condition.  In essence we need a nonlinear
version of a Perron vector for the construction of a Lyapunov
function for the interconnected system.  This will be made
rigorous in the sequel. But first we introduce some further
notation.

The adjacency matrix $A_\Gamma=(a_{ij})$ of a matrix
$\Gamma\in\Kinfnn$ is defined by $a_{ij}=0$ if
$\gamma_{ij}\equiv0$ and $a_{ij}=1$ otherwise. Then
$A_\Gamma=(a_{ij})$ is also the adjacency matrix of the graph
representing an interconnection.

We say that a matrix $\Gamma$ is \emph{primitive, irreducible} or
\emph{reducible} if and only if $A_\Gamma$ is primitive,
irreducible or reducible, respectively. Recall (and see
\cite{BeP79} for more on this subject) that a nonnegative matrix
$A$ is
\begin{itemize}
\label{revision:recalling-irreducibility} \item \emph{primitive}
if there exists a $k\geq1$ such that $A^{k}$ is
 positive;
\item \emph{irreducible} if for every pair $(i,j)$ there exists a
 $k\geq1$ such that the $(i,j)$th entry of $A^{k}$ is positive;
 obviously, primitivity implies irreducibility;
\item \emph{reducible} if it is not irreducible.
\end{itemize}
A network or a graph is strongly connected if and only if the
associated adjacency matrix is irreducible, see also~\cite{BeP79}.

For $\Kinf$ functions $\alpha_1,\dots,\alpha_n$ we define a
diagonal operator $D:\R^n_+\rightarrow\R^n_+$ by
\begin{equation}\label{eq:D}
D(s):=(s_1+\alpha_1(s_1),\dots,s_n+\alpha_n(s_n))^T, \quad
s\in\R^n_+.
\end{equation}

For an operator $T:\Rnp\to\Rnp$, the condition $T\ngeq\id$ means
that for all $s\ne0$, $T(s)\ngeq s$. In words, at least one
component of $T(s)$ has to be strictly less than the corresponding
component of $s$.

\begin{definition}[Small gain conditions]
\label{def:small-gain-conditions}
Let a gain matrix $\Gamma$ and a monotone aggregation $\mu$ be
given. The operator
$\Gamma_\mu$ is said to satisfy the small gain condition~\eqref{eq:small-gain-condition}, if
\begin{equation}
  \label{eq:small-gain-condition}
  \tag{SGC}
  \Gamma_{\mu}\not\ge\id,
\end{equation}
Furthermore, $\Gamma_\mu$ satisfies the strong small gain
condition~\eqref{eq:strong-small-gain-condition}, if there exists a
$D$ as in \eqref{eq:D} such that
\begin{equation}
  \label{eq:strong-small-gain-condition}
  \tag{sSGC}
  D\circ\Gamma_{\mu}\not\ge\id \,.
\end{equation}
\end{definition}
It is not difficult to see that
\eqref{eq:strong-small-gain-condition} can equivalently be stated
as
\begin{equation}
  \label{eq:strong-small-gain-condition-postfix-D}
  \tag{sSGC'}
  \Gamma_{\mu}\circ D\ngeq\id.
\end{equation}
Also for \eqref{eq:strong-small-gain-condition} or
\eqref{eq:strong-small-gain-condition-postfix-D} to hold it is
sufficient to assume that the function $\alpha_1,\ldots,\alpha_n$
are all identical. This can be seen by defining $\alpha(s) :=
\min_i\alpha_i(s)$.  We abbreviate this in writing
$D=\diag(\id+\alpha)$ for some $\alpha\in\Kinf$.

For maps $T:\Rnp\to\Rnp$ we define the following sets:
\begin{gather*}
\Omega(T):=\{s\in\Rnp:T(s)< s\}=\bigcap_{i=1}^n
\Omega_i(T), \text{ where}\\
\Omega_i(T):=\{s\in\Rnp:T(s)_i <  s_i\} \,.
\end{gather*}
If no confusion arises we will omit the reference to $T$.
Topological properties of the introduced sets are related to the
small gain conditions \eqref{eq:small-gain-condition}, cf.\ also
\cite{DRW05-cdcecc,DRW-mcss,R08-positivity}.  They will be used in
the next section for the construction of an ISS Lyapunov function
for the interconnection.

\section{Lyapunov functions}
\label{sec:lyapunov-functions}

In this section we present the two main results of the paper. The
first is a topological result on the existence of a jointly
unbounded path in the set $\Omega$, provided that $\Gamma_\mu$
satisfies the small gain condition. This path will be crucial in
the construction of a Lyapunov function, which is the second main
result of this section.

\begin{definition}
\label{def:omega-path} A continuous path $\sigma \in \Kinf^n$ will
be called an {\em $\Omega$-path with
respect to $\Gamma_\mu$} if
\begin{enumerate}
\item for each $i$, the function $\sigma_i^{-1}$ is locally Lipschitz
  continuous on $(0,\infty)$;
\item for every compact set $K\subset (0,\infty)$ there are constants
  $0<c<C$ such that for all $i=1,\ldots,n$ and all points of differentiability of
  $\sigma_i^{-1}$  we have
  \begin{equation}
    \label{sigma-bounds}
    0<c \leq (\sigma_i^{-1})'(r) \leq C\,,\quad \forall r\in K;
  \end{equation}
\item $\sigma(r) \in \Omega(\Gamma_\mu)$ for all $r>0$, i.e.
  \begin{equation}
    \label{sigma}
    \Gamma_\mu(\sigma(r)) < \sigma(r) \,,\quad \forall r> 0\,.
  \end{equation}
\end{enumerate}
\end{definition}

Now we can state the first of our two main results, which regards
the existence of $\Omega$-paths.

\begin{theorem}
\label{path}
Let $\Gamma \in \Kinfnn$ be a gain matrix and $\mu \in \MAF_n^n$.
Assume that one of the following assumptions is satisfied
\begin{enumerate}
\item $\Gamma_\mu$ is linear and the spectral radius of
  $\Gamma_{\mu}$ is less than one;
\item $\Gamma$ is irreducible and $\Gamma_\mu\ngeq\id$;
\item\label{item:3} $\mu=\max$ and $\Gamma_\mu\ngeq\id$;
\item alternatively assume that $\Gamma_{\mu}$ is bounded, i.e.,
  $\Gamma\in((\K\setminus\Kinf)\cup\{0\})^{n\times n}$, and satisfies
  $\Gamma_\mu\ngeq\id$.
\end{enumerate}
 Then there exists an $\Omega$-path $\sigma$
with respect to $\Gamma_\mu$.
\end{theorem}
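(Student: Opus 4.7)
The plan is to treat the four cases separately, with case (ii) being the technical core and the remaining cases reducing to more elementary arguments. In each case the goal is the same: produce $\sigma \in \Kinf^n$ with $\Gamma_\mu(\sigma(r)) < \sigma(r)$ for all $r>0$ together with the bi-Lipschitz bounds on $\sigma_i^{-1}$ from \eqref{sigma-bounds}.

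For case (i), I would invoke a Neumann-series version of Perron--Frobenius. Since $\Gamma_\mu$ is linear with spectral radius below $1$, the series $(I-\Gamma_\mu)^{-1}=\sum_{k\ge0}\Gamma_\mu^{k}$ converges to a nonnegative matrix. Setting $v:=(I-\Gamma_\mu)^{-1}\mathbf{1}$ yields $v>0$ componentwise with $\Gamma_\mu v = v-\mathbf{1} < v$. The ray $\sigma(r):=rv$ then lies in $\Kinf^n$, has linear inverse $\sigma_i^{-1}(s)=s/v_i$, and satisfies the required strict inequality by linearity.

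Case (ii) is the main obstacle. I would show first that for every $r>0$ the slice $\Omega(\Gamma_\mu)\cap S_r$ is nonempty, by contradiction via the Knaster--Kuratowski--Mazurkiewicz theorem: the closed sets $F_i:=\{s\in S_r:\Gamma_\mu(s)_i\ge s_i\}$ would cover $S_r$ if $\Omega\cap S_r$ were empty, and irreducibility of $\Gamma$ combined with \eqref{standard-assumption-on-compatibility-MAF-and-Gamma} forces each $F_i$ to contain the face of $S_r$ opposite the $i$th vertex. KKM then produces a common point $s^\ast\in\bigcap_i F_i$, i.e.\ $\Gamma_\mu(s^\ast)\ge s^\ast$, contradicting \eqref{eq:small-gain-condition}. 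Once nonemptiness of the slices is established, one must select points $s(r)\in\Omega\cap S_r$ depending on $r$ in such a way that (a) the $i$th coordinate $\sigma_i(r)$ is strictly increasing and unbounded in $r$, and (b) $\sigma_i^{-1}$ is locally bi-Lipschitz on $(0,\infty)$. The strict monotonicity of $\Gamma_\mu$ forces $\Omega$ to be ``monotone'' in $r$ in a useful sense, so that a continuous selection exists; the regularity upgrade to bi-Lipschitz bounds is the delicate part and is what motivates postponing the full argument to Section~\ref{sec:pathconstruction}.

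Cases (iii) and (iv) should follow from variants of the same ideas. For $\mu=\max$ the small gain condition is equivalent to the cycle condition on $\Gamma$ (as remarked in the introduction), and one can sidestep KKM entirely by building $\sigma_i$ directly from finite compositions of gain functions along cycles; this is to be treated in Section~\ref{sec:special-case:mu=max}. For bounded $\Gamma_\mu$ there is $M\in\Rnp$ with $\Gamma_\mu(s)\le M$ for all $s$, so every ray whose components eventually exceed $M$ lies in $\Omega$; one constructs a path on $(0,r_0]$ by the topological argument of case (ii) applied to a modified gain matrix obtained by extending the $\K$-entries to $\Kinf$ above some threshold, and glues it to a linear extension on $[r_0,\infty)$ chosen to preserve the bi-Lipschitz bounds. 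The genuine difficulty throughout is not existence but regularity of $\sigma$, i.e.\ \eqref{sigma-bounds}, which is essential for the Lyapunov construction in the following section.
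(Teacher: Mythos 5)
Your case (i) is fine and essentially the paper's argument (a positive vector $v$ with $\Gamma_\mu v<v$ and the ray $\sigma(r)=rv$). The genuine gap is in case (ii), which you correctly identify as the core but then do not prove: from nonemptiness of the slices $\Omega(\Gamma_\mu)\cap S_r$ you assert that strict monotonicity of $\Gamma_\mu$ ``forces $\Omega$ to be monotone in $r$'' so that a continuous selection exists, and you postpone the regularity to Section~\ref{sec:pathconstruction} --- but that section \emph{is} the proof you are being asked to supply, and the selection step is exactly where all the difficulty lies. Points of $\Omega\cap S_r$ and $\Omega\cap S_{r'}$ need not be comparable in the componentwise order, so a slice-by-slice selection need not be increasing (or even continuous), and no mechanism is offered for the bounds \eqref{sigma-bounds}. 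The paper's construction is genuinely different: using irreducibility it first robustifies the small gain condition to $\Gamma_\mu\circ D\ngeq\id$ for a diagonal operator $D>\id$ (\cite[Prop.~5.8]{R08-positivity}); it then builds a \emph{backward} orbit $s^k=\Gamma_\mu(D(s^{k+1}))$ inside the unbounded set $\Psi_\infty(\Gamma_\mu\circ D)$ (Proposition~\ref{prop:psi-infty-unbounded}, which is where the KKM-based result enters), uses irreducibility to show the orbit is unbounded in \emph{every} component, makes it strictly increasing by factoring $D=D_1^{(k)}\circ D_2^{(k)}$ and passing to $\tilde s^k=D_2^{(k)}(s^k)$, and finally interpolates linearly; the segments stay in $\Omega(\Gamma_\mu)$ by a Lemma~\ref{lem:pathconvex}-type argument, and piecewise linearity yields \eqref{sigma-bounds} automatically. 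None of this machinery (the robustness operator $D$, $\Psi_\infty$, backward iteration, strictification) appears in your sketch. A smaller but real flaw is your KKM step: with $F_i=\{s\in S_r:\Gamma_\mu(s)_i\geq s_i\}$, ``each $F_i$ contains the face opposite the $i$th vertex'' is not the KKM hypothesis (one needs the face spanned by each index set $I$ to be covered by $\bigcup_{i\in I}F_i$), and already the singleton faces fail, since $\Gamma_\mu(re_i)_i=\mu_i(\gamma_{i1}(0),\ldots,\gamma_{in}(0))$ does not exceed $r$ for large $r$; the published argument instead works with the sets $\{s\in S_r:\Gamma_\mu(s)_i\leq s_i\}$ and then has to pass from the resulting point of $\Psi$ to a point of $\Omega$.

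For case (iv) your reduction is also shaky: extending the bounded entries to class $\Kinf$ above a threshold and invoking case (ii) requires irreducibility, which is not assumed in (iv), and the extension must be done so as not to destroy $\Gamma_\mu\ngeq\id$ --- neither point is addressed. The paper's route (Proposition~\ref{prop:sigma-for-bounded-Gamma}) is far simpler: any $s>\sup\Gamma_\mu(\Rnp)$ lies in $\Omega$ together with the whole ray $\{\eta s:\eta\geq1\}$, and the forward orbit $\Gamma_\mu^k(s)\to0$ with convex interpolation (Lemmas~\ref{lem:easyconv}--\ref{lemma:sigma-s}) connects $s$ to the origin inside $\Omega$. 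Your case (iii) suggestion (building $\sigma$ from compositions along cycles) is a plausible alternative route, but as written it is only a pointer, comparable to the paper's own citation of Theorem~\ref{thm:mu=max}; it does not close the gap left in (ii).
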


We will postpone the proof of this rather topological result to
Section~\ref{sec:pathconstruction} and reap the fruits of
Theorem~\ref{path} first.  Note, however, that for~\ref{item:3}
there exists a ``$\text{cycle gain} < \id$''-type equivalent
formulation, cf.\ Theorem~\ref{thm:mu=max} 
and see \cite{Pot03,Tee05,DRW-mcss,KarafyllisJiang:5:A-Vector-Small-Gain-Theorem-for-General-:}.

In addition to the above result, the existence of $\Omega$-paths
can also be asserted for reducible $\Gamma$ and $\Gamma$ with
mixed, bounded and unbounded, class $\K$ entries, see
Theorem~\ref{thm:monotone-path-reducible-case} and
Proposition~\ref{prop:partly-bounded-Gamma}, respectively.

\begin{theorem}
\label{LFtheo}
Consider the interconnected system $\Sigma$ given by \eqref{eq:3},
\eqref{eq:4} where each of the subsystems $\Sigma_i$ has an
ISS Lyapunov function $V_i$, the corresponding gain matrix is given
by \eqref{eq:5}, and $\mu=(\mu_1,\ldots,\mu_n)^T$ is given
by~\eqref{ISScond}. Assume there are an $\Omega$-path $\sigma$ with
respect to $\Gamma_\mu$ and a function $\phi \in \Kinf$ such that
\begin{equation} \label{generalcond}
  \ol{\Gamma}_\mu(\sigma(r),\phi(r)) < \sigma(r) \,,\quad
  \forall \ r> 0
\end{equation}
is satisfied, then an ISS Lyapunov function for the overall system
is given by
\begin{equation}
  \label{Vdef}
  V(x) = \max_{i=1,\ldots,n} \sigma_i^{-1} (V_i(x_i))\,.
\end{equation}
In particular, for all points of differentiability of
$V$ we have the implication
\begin{equation}
  V(x) \geq \max \{
  \phi^{-1}(\gamma_{iu}(\|u\|)) \;|\; i=1,\ldots n \}  \implies \nabla
  V(x) f(x,u) \leq -\alpha(\|x\|)\,,
\end{equation}
 where $\alpha$ is a suitable positive definite function.
\end{theorem}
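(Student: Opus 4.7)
The plan has three parts: (1) verify that $V$ in \eqref{Vdef} meets Assumption~\ref{A1}; (2) show that under the hypothesis on $u$, the ISS Lyapunov trigger \eqref{ISScond} is activated for every $\Sigma_i$ whose index $i$ is active in the maximum; (3) combine via the Clarke chain rule to extract the global Lyapunov decay. For part~(1), each composition $W_i(x) := \sigma_i^{-1}(V_i(x_i))$ is locally Lipschitz on $\R^N\setminus\{0\}$ because $\sigma_i^{-1}\in\Kinf$ is locally Lipschitz on $(0,\infty)$ by Definition~\ref{def:omega-path}(i), and $V_i$ is locally Lipschitz and proper by Assumption~\ref{A1}. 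The finite maximum $V$ inherits local Lipschitz continuity on $\R^N\setminus\{0\}$, and the $\Kinf$-bounds for the $V_i$ composed with $\sigma_i^{-1}\in\Kinf$ deliver the required proper and positive definite bounds.

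For part~(2), fix $x\ne 0$ and $u$ satisfying $V(x)\ge\max_i\phi^{-1}(\gamma_{iu}(\|u\|))$. Set $r := V(x) > 0$ and let $I(x) := \{i : W_i(x) = V(x)\}$ denote the active set. By the definition of $V$ we have $V_j(x_j)\le\sigma_j(r)$ for all $j$, with equality precisely for $j\in I(x)$, while the hypothesis on $u$ reads $\gamma_{iu}(\|u\|)\le\phi(r)$ for every $i$. Monotonicity of the $\mu_i$ and of the $\gamma_{ij}$ combined with condition \eqref{generalcond} then gives, for each $i\in I(x)$,
\begin{align*}
  \mu_i\bigl(\gamma_{i1}(V_1(x_1)),\ldots,\gamma_{in}(V_n(x_n)),\gamma_{iu}(\|u\|)\bigr)
  &\le \mu_i\bigl(\gamma_{i1}(\sigma_1(r)),\ldots,\gamma_{in}(\sigma_n(r)),\phi(r)\bigr)\\
  &= \bigl(\ol{\Gamma}_\mu(\sigma(r),\phi(r))\bigr)_i\\
  &< \sigma_i(r) \;=\; V_i(x_i).
\end{align*}
Thus the trigger for $\Sigma_i$ is active, and the Clarke-gradient equivalent \eqref{basicissLFclarke} yields $\langle\eta,f_i(x,u)\rangle \le -\alpha_i(\|x_i\|)$ for every $\eta\in\partial V_i(x_i)$.

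For part~(3), at a point $x$ where $V$ is differentiable and any $i\in I(x)$, the function $V-W_i$ is nonnegative near $x$ and vanishes at $x$, so $0\in\partial(V-W_i)(x)\subset\{\nabla V(x)\}-\partial W_i(x)$, which places $\nabla V(x)$ inside $\partial W_i(x)$. The Clarke chain rule applied to $W_i=\sigma_i^{-1}\circ V_i$ (both locally Lipschitz, $\sigma_i^{-1}$ strictly increasing) represents every element of $\partial W_i(x)$ as a convex combination whose $x_i$-block has the form $c\,\eta$ with $c\in\partial\sigma_i^{-1}(V_i(x_i))\subset(0,\infty)$ and $\eta\in\partial V_i(x_i)$, all other blocks vanishing. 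Combining with part~(2),
\[
  \nabla V(x)\,f(x,u) \;\le\; -\,c\,\alpha_i(\|x_i\|).
\]
By Definition~\ref{def:omega-path}(ii), $c$ admits a strictly positive lower bound depending only on $r = V(x)$; meanwhile properness of $V_i$ gives $\|x_i\|\ge(\psi_2^i)^{-1}(\sigma_i(r))$, and since $V$ itself is proper and positive definite by part~(1), $r$ is controlled by $\|x\|$ in both directions. Assembling these bounds and taking the minimum over $i\in I(x)$ produces the desired positive definite $\alpha$ with $\nabla V(x)\,f(x,u)\le-\alpha(\|x\|)$.

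The principal obstacle is the nonsmoothness of the maximum in \eqref{Vdef}: at points where several indices are simultaneously active, $V$ may fail to be differentiable, and the chain rule must be routed through Clarke's generalized gradient. What keeps this under control is that condition \eqref{generalcond} is a strict \emph{componentwise} inequality, so the decay estimate of part~(2) holds uniformly over all $i\in I(x)$ and propagates consistently through the convex combinations that arise in the nonsmooth calculus.
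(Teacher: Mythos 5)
Your proposal is correct in substance and its core is the same as the paper's argument: the decisive computation in your part (2) -- using $V_j(x_j)\le\sigma_j(r)$ for $r=V(x)$, the monotonicity (M2'), and the strict componentwise inequality \eqref{generalcond} to activate the trigger in \eqref{ISScond} for every active index -- is exactly the chain of inequalities in the paper, and your assembly of the positive definite $\alpha$ from the bounds \eqref{sigma-bounds} and the properness of the $V_i$ mirrors the paper's definition of $\tilde\alpha_i$ and $\alpha$. Where you differ is in handling the nonsmooth maximum: the paper works with the full Clarke subdifferential of $V$ at every $x\neq 0$, using the max-rule $\partial V(x)\subset\conv\bigl\{\bigcup_{i\in I}\partial[\sigma_i^{-1}\circ V_i\circ\pi_i](x)\bigr\}$ and proving the decay for all $\zeta\in\partial V(x)$ (the \eqref{basicissLFclarke}-type statement), whereas you argue only at points of differentiability of $V$, which suffices for the theorem as stated via Definition~\ref{def:Lipschitz-ISS-Lf} but yields a slightly weaker conclusion than the paper's. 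One technical slip in your part (3): the inclusion $\partial(V-W_i)(x)\subset\{\nabla V(x)\}-\partial W_i(x)$ implicitly assumes $\partial V(x)=\{\nabla V(x)\}$, which for a locally Lipschitz function requires \emph{strict} differentiability at $x$, not mere differentiability; the sum rule only gives $\partial V(x)\cap\partial W_i(x)\neq\emptyset$. The fact you actually need, $\nabla V(x)\in\partial W_i(x)$ for each active $i$, is nevertheless true and easy to repair: since $W_i\le V$ near $x$ with equality at $x$ and $V$ is differentiable at $x$, one has, taking $y=x-tv$ in the definition of the generalized directional derivative,
\begin{equation*}
W_i^\circ(x;v)\;\ge\;\limsup_{t\downarrow 0}\frac{W_i(x)-W_i(x-tv)}{t}\;\ge\;\langle\nabla V(x),v\rangle\quad\text{for all }v,
\end{equation*}
which is precisely the membership $\nabla V(x)\in\partial W_i(x)$. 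With that two-line correction your proof is complete; alternatively you could simply adopt the paper's convex-hull route, which also covers the points where $V$ fails to be differentiable.
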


Note that by construction the Lyapunov function $V$ is not smooth,
even if the functions $V_i$ for the subsystems are. This is why it
is appropriate in this framework to consider Lipschitz continuous
Lyapunov functions, which are differentiable almost everywhere.

\begin{proof}
We will show the assertion in the Clarke gradient sense. For $x=0$ there is nothing to show. So let
$0\neq x = (x_1^T,\ldots,x_n^T)^T$. Denote by $I$ the set of indices $i$ for which
\begin{equation}
\label{eq:firststep}
  V(x) = \sigma_i^{-1}(V_i(x_i))
\geq
 \max_{j\neq i} \sigma_j^{-1} (V_j(x_j)) \,.
\end{equation}
Then $x_i\neq 0$, for $i\in I$.
Also as $V$ is obtained through maximization we have
because of \cite[p.83]{CLSW98} that
\begin{equation}
\label{convrep}
\partial V(x)\subset \conv \left\{ \bigcup_{i\in
    I} \partial[\sigma_i^{-1}\circ V_i\circ\pi_i](x)\right\}\,.
\end{equation}

Fix $i\in I$ and assume without loss of generality $i=1$. Then if we
assume $V(x) \geq \max_{i=1,\ldots,n} \{ \phi^{-1}(\gamma_{iu}(\| u
\|))\}$ it follows in particular that $\gamma_{1u}(\norm{u}) \leq
\phi(V(x))$.  Using the abbreviation $r:= V(x)$, denoting the first
component of $\overline{\Gamma}_\mu$ by $\overline{\Gamma}_{\mu,1}$ and
using assumption \eqref{generalcond} we have
 \begin{equation*}
 \begin{aligned}
  V_1(x_1) =&\;  \sigma_1 (r) > \overline\Gamma_{\mu,1}(\sigma(r),\phi(r))\\
  & = \mu_1\left[\gamma_{11}(\sigma_1(r)),\ldots,\gamma_{1n}(\sigma_n(r)),\phi(r)\right] \\
  &\geq \mu_1\left[\gamma_{11}(\sigma_1(r)),\ldots,\gamma_{1n}(\sigma_n(r)), \gamma_{1u}(\norm{u})\right] \\
  &  = \mu_1\left[
  \gamma_{11}\circ \sigma_1 \circ
    \sigma_1^{-1} (V_1(x_1))
    ,  \ldots,
  \gamma_{1n}\circ \sigma_n \circ \sigma_1^{-1}
  (V_1(x_1)),\gamma_{1u}(\norm{u})\right]   \\
  & \geq \mu_1 \left[ \gamma_{11}\circ V_1(x_1),\ldots, \gamma_{1n}\circ
  V_n(x_n), \gamma_{1u}(\norm{u})\right] \,,
  \end{aligned}
\end{equation*}
where we have used \eqref{eq:firststep} and (M2') in the last
inequality.  Thus the ISS condition \eqref{ISScond} is applicable and
we have for all $\zeta\in \partial V_1(x_1)$ that
\begin{equation}
  \label{eq:10}
    \langle \zeta , f_1(x,u)\rangle
    \leq - {\alpha}_1(\norm{x_1})\,\,.
\end{equation}
By the chain rule for Lipschitz continuous functions \cite[Theorem 2.5]{CLSW98} we have
\[ \partial (\sigma_i^{-1}\circ V_i )(x_i) \subset \{ c \zeta \;:\; c
\in \partial \sigma_i^{-1}(y) \,, y= V_i(x_i)\,, \zeta \in \partial
V_i(x_i) \} \,.\] Note that in the previous equation the number $c$
is bounded away from zero because of \eqref{sigma-bounds}. We set for
$\rho>0$
\[
\tilde{\alpha}_i(\rho) := c_{\rho,i} \, \alpha_i(\rho) > 0 \,,
\]
where $c_{\rho,i}$ is the constant corresponding to the set $K:=
\{x_i\in \R^{N_i}\ : \ \rho/2 \leq \|x_i\| \leq 2\rho \}$ given by
\eqref{sigma-bounds} in the definition of an $\Omega$-path.  With the
convention $x= (x_1^T,\ldots,x_n^T)^T$ we now define for $r>0$
\[
\alpha(r) = \min \{ \tilde \alpha_i(\|x_i\|) \;|\; \|x\|= r, V(x) =
\sigma_i^{-1}(V_i(x_i))) \} >0 \,.
\]
Here we have used that for a given $r>0$ and $\|x\|=r$ the norm of
$\|x_i\|$ such that $V(x) = \sigma_i^{-1}(V_i(x_i)))$ is bounded away
from $0$.

It now follows from \eqref{eq:10} that if $V(x) \geq
\max_{i=1,\ldots,n} \{ \phi^{-1}(\gamma_{iu}(\| u \|))\}$, then we
have for all $\zeta\in \partial \left[\sigma_1^{-1} \circ V_1\right]
(x_1)$ that
\begin{equation}
  \label{eq:8b}
    \langle \zeta , f_1(x,u)\rangle
    \leq - {\alpha}(\norm{x})\,\,.
\end{equation}
In particular, the right hand side depends on $x$ 
and not only on $x_1$.  The
same argument applies for all $i\in I$. Now for any $\zeta
\in \partial V(x)$ we have by \eqref{convrep} that $\zeta =
\sum_{i\in I} \lambda_i c_i \zeta_i$ for suitable $\lambda_i \geq 0,
\sum_{i\in I} \lambda_i=1$ and with $\zeta_i \in \partial (V_i\circ
\pi_i)(x)$ and $c_i \in \partial \sigma_i^{-1} (V_i(x_i))$. It
follows that
\begin{equation*}
  \begin{aligned}
    \langle \zeta, f(x,u) \rangle = \sum_{i\in I} \lambda_i \langle c_i
    \zeta_i , f(x,u) \rangle
    = \sum_{i\in I} \lambda_i \langle c_i
    \pi_{i}(\zeta_i ), f_{i}(x,u) \rangle\\
    \leq - \sum_{i\in I} \lambda_i {\alpha}(\norm{x}) = - \alpha(\norm{x}) \,.
  \end{aligned}
\end{equation*}
This shows the assertion.
\end{proof}

In the absence of external inputs, ISS is the same as 0-GAS (cf.\
\cite{SoW95b,SoW95,SoW96}). We note the following consequence in the 
case that only global asymptotic stability is of interest.

\begin{corollary}[0-GAS for strongly interconnected networks]
 \label{cor:0-GAS}
 In the setting of Theorem~\ref{LFtheo}, assume that the external
 inputs satisfy $u\equiv 0$ and that the network of interconnected
 systems is strongly connected. If $\Gamma_{\mu}\ngeq\id$ then the
 network is 0-GAS.
\end{corollary}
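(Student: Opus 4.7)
The plan is to recognize this as essentially a specialization of Theorem~\ref{LFtheo} in which the external input term plays no role. Strong connectedness of the network is equivalent to irreducibility of the adjacency matrix $A_\Gamma$, hence of the gain matrix $\Gamma$. Combined with the hypothesis $\Gamma_{\mu}\ngeq\id$, case (ii) of Theorem~\ref{path} supplies an $\Omega$-path $\sigma\in\Kinf^{n}$ with
\[ \Gamma_{\mu}(\sigma(r)) < \sigma(r) \quad \text{for all } r>0. \]
I would then put forward $V(x):=\max_{i} \sigma_{i}^{-1}(V_{i}(x_{i}))$ as in \eqref{Vdef}. By Assumption~\ref{A1} on each $V_{i}$ together with properties (i)--(ii) of an $\Omega$-path, $V$ is continuous, proper, positive definite, and locally Lipschitz continuous on $\R^{N}\setminus\{0\}$.

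Next, I would run the autonomous portion of the proof of Theorem~\ref{LFtheo}. Fix $x\neq 0$ and let $I$ be the active index set realizing the maximum in the definition of $V(x)=r$; for $i\in I$ one has $V_{i}(x_{i})=\sigma_{i}(r)$ and $V_{j}(x_{j})\leq \sigma_{j}(r)$ for all $j$. The weaker monotonicity property (M2') of $\mu_{i}$ together with $\sigma$ being an $\Omega$-path gives
\[
\mu_{i}\bigl(\gamma_{i1}(V_{1}(x_{1})),\ldots,\gamma_{in}(V_{n}(x_{n})),0\bigr)
\leq \Gamma_{\mu,i}(\sigma(r)) < \sigma_{i}(r)= V_{i}(x_{i}).
\]
Since $u\equiv 0$, the external gain term $\gamma_{iu}(\|u\|)$ vanishes, so the ISS Lyapunov implication \eqref{ISScond} for $\Sigma_{i}$ is activated and yields $\nabla V_{i}(x_{i})f_{i}(x,0)\leq -\alpha_{i}(\|x_{i}\|)$ at points of differentiability of $V_{i}$. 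Note that no auxiliary function $\phi$ is needed here, since with $u\equiv 0$ the threshold $\max_{i}\phi^{-1}(\gamma_{iu}(\|u\|))$ that appears in Theorem~\ref{LFtheo} is identically zero.

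From the per-subsystem estimate I would pass to an estimate on $V$ by exactly the Clarke-subdifferential calculation in the proof of Theorem~\ref{LFtheo}: the maximum representation \eqref{convrep}, the chain rule for Lipschitz compositions applied to $\sigma_{i}^{-1}\circ V_{i}$, and the uniform positive lower bound \eqref{sigma-bounds} on $(\sigma_{i}^{-1})'$ over compact subsets of $(0,\infty)$ together yield $\langle \zeta, f(x,0)\rangle \leq -\alpha(\|x\|)$ for every $\zeta\in\partial V(x)$, where $\alpha$ is the positive definite function constructed in that proof. Thus $V$ is a strict nonsmooth Lyapunov function for the autonomous system $\dot x=f(x,0)$. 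Since $V$ is proper and positive definite, the standard Clarke-gradient version of Lyapunov's second method (cf.\ \cite{CLSW98}) then gives 0-GAS of the interconnection. The main (and only) conceptual point is the observation that strong connectedness allows us to invoke Theorem~\ref{path}(ii) to produce the $\Omega$-path; once this is in hand, the construction of $\phi$ that was the delicate ingredient of Theorem~\ref{LFtheo} is simply bypassed because there is no external input to dominate.
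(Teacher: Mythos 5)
Your proposal is correct and follows essentially the same route as the paper: strong connectedness gives irreducibility, so Theorem~\ref{path}~(ii) yields an $\Omega$-path, and the Lyapunov function \eqref{Vdef} then certifies global asymptotic stability of the unforced network. The only difference is that you spell out the autonomous ($u\equiv 0$) specialization of the proof of Theorem~\ref{LFtheo}, including why the auxiliary function $\phi$ is not needed, which the paper leaves implicit.
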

\begin{proof}
 By Theorem~\ref{path}~(ii) there exists an $\Omega$-path and a
 nonsmooth Lyapunov for the network is given by~\eqref{Vdef}, hence
 the origin of the externally unforced composite system is GAS.
\end{proof}

\begin{remark}
At first sight it might seem that the previous corollary
is stronger than \cite[Cor. 2.1]{JTP94}, as no robustness
term $D$ is needed in the assumptions. 
However, the result here is formulated for Lyapunov
functions whereas the result in \cite{JTP94} is based on the
trajectory formulation of ISS in summation form. The proof in the 
trajectory version essentially requires
bounds on $(\id-\Gamma_{\mu})^{-1}$, which relies heavily on $D$
unless $\mu=\max$, \cite{JTP94,DRW-mcss,07-R-DISS}.
In contrast, for $0$-GAS 
the $D$ is not needed in the Lyapunov setting, because 
for irreducible $\Gamma$ it is
possible to construct the path $\sigma$ without $D$ by Theorem~\ref{path}~(ii). 
\end{remark}

We now specialize the Theorem~\ref{LFtheo} to particular cases of
interest. Namely, when the gain with respect to the external input
$u$ enters the ISS condition (i) additively, (ii) via maximization
and (iii) as a factor.

\begin{corollary}[Additive gain of external input $\mathbf{u}$]
\label{sumcor}
Consider the interconnected system $\Sigma$ given by \eqref{eq:3},
\eqref{eq:4} where each of the subsystems $\Sigma_i$ has an
ISS Lyapunov function $V_i$ and the corresponding gain matrix is given
by \eqref{Gammadef}. Assume that the ISS condition is additive in the
gain of $u$, that is,
\begin{equation}
  \ol{\Gamma}_\mu(V_1(x_1),\ldots, V_n(x_n),\norm{u}) =
  \Gamma_\mu(V_1(x_1),\ldots, V_n(x_n)) + \gamma_u(\norm{u}) \,,
\end{equation}
where
$\gamma_u(\|u\|)=(\gamma_{1u}(\|u\|),\ldots,\gamma_{nu}(\|u\|))^T$.
If $\Gamma_\mu$ is irreducible and if there exists an $\alpha \in \Kinf$ such that for
$D=\diag(\id + \alpha)$ the gain operator $\Gamma_\mu$ satisfies the
strong small gain condition
\[ D \circ \Gamma_\mu(s) \not \geq s \] then the interconnected system
is ISS and an ISS Lyapunov function is given by \eqref{Vdef}, where
$\sigma \in \Kinf^n$ is an arbitrary $\Omega$-path with respect to
$D\circ \Gamma_\mu$.
\end{corollary}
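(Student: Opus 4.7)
The strategy is to verify the hypotheses of Theorem~\ref{LFtheo}: I first build an $\Omega$-path $\sigma$ for $\Gamma_\mu$, then pick $\phi\in\Kinf$ small enough that the additive condition \eqref{generalcond} holds, and finally invoke Theorem~\ref{LFtheo} together with Theorem~\ref{thm:equivalence-ISS-and-nonsmooth-ISS Lyapunov-function}.

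To obtain $\sigma$, observe that $D\circ\Gamma_\mu$ is itself a gain operator of the form ``gain matrix $\Gamma$ paired with MAFs $\mu'_i:=(\id+\alpha)\circ\mu_i\in\MAF_n$''. Since $\Gamma$ is irreducible and $D\circ\Gamma_\mu\ngeq\id$ by assumption, Theorem~\ref{path}(ii) applied to this pair yields an $\Omega$-path $\sigma\in\Kinf^n$ with
\[
\Gamma_{\mu,i}(\sigma(r))+\alpha\bigl(\Gamma_{\mu,i}(\sigma(r))\bigr)<\sigma_i(r)\qquad \text{for all } r>0,\ i=1,\ldots,n.
\]
This $\sigma$ is \emph{a fortiori} an $\Omega$-path for $\Gamma_\mu$, and the slack $\eta_i(r):=\alpha(\Gamma_{\mu,i}(\sigma(r)))$ is strictly positive. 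Irreducibility of $\Gamma$ puts a $\Kinf$ entry in each row, so using (M1) and (M3) of $\mu_i$ together with $\sigma\in\Kinf^n$ one checks that each $\eta_i$ is continuous on $(0,\infty)$ with $\eta_i(r)\to 0$ as $r\to 0^+$ and $\eta_i(r)\to\infty$ as $r\to\infty$.

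The one genuinely technical step is to produce $\phi\in\Kinf$ so small that $\gamma_{iu}(\phi(r))<\eta_i(r)$ for every $r>0$ and every $i$ with $\gamma_{iu}\not\equiv 0$. Setting
\[
h(r):=\min_{i\,:\,\gamma_{iu}\not\equiv 0}\sup\bigl\{t\geq 0 : \gamma_{iu}(t)\leq \tfrac{1}{2}\eta_i(r)\bigr\}
\]
(with the supremum equal to $+\infty$ when $\gamma_{iu}$ is bounded and $\tfrac{1}{2}\eta_i(r)$ exceeds its range), $h$ is continuous and strictly positive on $(0,\infty)$ with $h\to 0$ at $0$ and $h\to\infty$ at $\infty$; a standard minorant construction (e.g.\ piecewise-linear interpolation on a dyadic partition of $(0,\infty)$ between values strictly below $h$) produces $\phi\in\Kinf$ with $\phi\leq h$. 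Then the additivity of $\ol\Gamma_\mu$ in its last argument gives, componentwise,
\[
\ol\Gamma_{\mu,i}(\sigma(r),\phi(r))=\Gamma_{\mu,i}(\sigma(r))+\gamma_{iu}(\phi(r))\leq \Gamma_{\mu,i}(\sigma(r))+\tfrac{1}{2}\eta_i(r)<\sigma_i(r),
\]
so \eqref{generalcond} holds. Theorem~\ref{LFtheo} then delivers the ISS Lyapunov function $V(x)=\max_i\sigma_i^{-1}(V_i(x_i))$, and ISS of $\Sigma$ follows from Theorem~\ref{thm:equivalence-ISS-and-nonsmooth-ISS Lyapunov-function}. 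The main obstacle is precisely this $\phi$-construction; irreducibility of $\Gamma$ is what keeps the slack $\eta_i$ bounded away from zero on compacta and unbounded at infinity, which is exactly what is needed to realise the minorant as a $\Kinf$ function.
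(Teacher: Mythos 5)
Your proposal is correct and follows essentially the same route as the paper: take an $\Omega$-path with respect to $D\circ\Gamma_\mu$ (justified via Theorem~\ref{path}(ii)), use the slack $\alpha(\Gamma_{\mu,i}(\sigma(r)))$ created by $D$ to absorb the external gains by choosing $\phi\in\Kinf$ small enough, and conclude with Theorem~\ref{LFtheo}. Your explicit remarks that $D\circ\Gamma_\mu$ is again a gain operator with MAFs $(\id+\alpha)\circ\mu_i$ and your detailed minorant construction of $\phi$ merely flesh out steps the paper states tersely ("possible because the left side is a minimum of finitely many $\Kinf$ functions").
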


\begin{proof}
By Theorem~\ref{path} an $\Omega(D\circ \Gamma_\mu)$-path $\sigma$
exists.
Observe that by irreducibility, (M1), and (M3) it follows that
$\Gamma_\mu(\sigma)$ is unbounded in all
components. Let
$\phi \in \Kinf$ be such that for all $r\geq 0$
\[ \min_{i=1,\ldots, n} \{ \alpha( \Gamma_{\mu,i}(\sigma(r))) \}\geq
\max_{i=1,\ldots, n} \{\gamma_{iu}(\phi(r)) \}\,.\] Note that this
is possible, because on the left we take the minimum of a finite
number of $\Kinf$ functions. Then we have for all $r>0$,
$i=1,\ldots,n$ that
\[ \sigma_i(r) > D\circ\Gamma_{\mu,i}(\sigma(r)) =
\Gamma_{\mu,i}(\sigma(r)) + \alpha(\Gamma_{\mu,i}(\sigma(r)) )
\geq\Gamma_{\mu,i}(\sigma(r)) + \gamma_{iu}(\phi(r)) \,. \] Thus
$\sigma(r) > \ol{\Gamma}_\mu(\sigma(r),\phi(r))$ and the assertion
follows from Theorem~\ref{LFtheo}.
\end{proof}

\begin{corollary}[Maximization w.r.t. external gain]
\label{maxcor}
Consider the interconnected system $\Sigma$ given by \eqref{eq:3},
\eqref{eq:4} where each of the subsystems $\Sigma_i$ has an
ISS Lyapunov function $V_i$ and the corresponding gain matrix is
given by \eqref{Gammadef}. Assume that $u$ enters the ISS condition
via maximization, that is,
\begin{equation}
  \ol{\Gamma}_\mu(V_1(x_1),\ldots, V_n(x_n),\norm{u}) =
  \max\left\{ \Gamma_\mu(V_1(x_1),\ldots, V_n(x_n)),
    \gamma_u(\norm{u}) \right\} \,,
\end{equation}
where
$\gamma_u(\|u\|)=(\gamma_{1u}(\|u\|),\ldots,\gamma_{nu}(\|u\|))^T$.
Then, if $\Gamma_\mu$ is irreducible and satisfies the small gain condition
\[ \Gamma_\mu(s) \not \geq s \] the interconnected system is ISS and
an ISS Lyapunov function is given by \eqref{Vdef}, where $\sigma \in
\Kinf^n$ is an arbitrary $\Omega$-path with respect to
$\Gamma_\mu$ and $\phi$
is a $\Kinf$ function with the property
 \begin{equation}
\gamma_{iu}\circ \phi(r) \leq \Gamma_{\mu,i}(\sigma(r))  \,,\quad i=1,\ldots,n,
   \label{eq:11}
 \end{equation}
where $\Gamma_{\mu,i}$ denotes the $i$-th row of $\Gamma_\mu$.
\end{corollary}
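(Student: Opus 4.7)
The plan is to reduce this corollary to Theorem~\ref{LFtheo} by exhibiting a suitable pair $(\sigma,\phi)$ satisfying the hypothesis~\eqref{generalcond}. The existence of the $\Omega$-path $\sigma\in\Kinf^n$ with respect to $\Gamma_\mu$ is immediate from Theorem~\ref{path}~(ii), since $\Gamma$ is irreducible and $\Gamma_\mu\not\ge\id$ holds by assumption. Thus the only remaining ingredient is to produce a $\Kinf$ function $\phi$ satisfying~\eqref{eq:11} and then to check that~\eqref{generalcond} holds automatically due to the maximization structure.

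Next I would construct $\phi$ explicitly. For each $i$ with $\gamma_{iu}\not\equiv 0$, I would verify that the scalar map $r\mapsto \Gamma_{\mu,i}(\sigma(r))$ is itself of class $\Kinf$: continuity and vanishing at $0$ follow from $\sigma(0)=0$ together with~(M1) and the standing compatibility assumption on $\mu$; strict monotonicity in $r$ follows from~(M2) applied to the nonzero indices of the $i$th row, combined with the strict monotonicity of each $\sigma_j\in\Kinf$; and unboundedness uses the irreducibility of $\Gamma$, which guarantees at least one entry $\gamma_{ij}$ in the $i$th row lies in $\Kinf$, so that sending $r\to\infty$ drives $\sigma_j(r)\to\infty$ and hence $\Gamma_{\mu,i}(\sigma(r))\to\infty$ by~(M3). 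Consequently $\gamma_{iu}^{-1}\circ\Gamma_{\mu,i}\circ\sigma\in\Kinf$ for every such $i$, and I would define
\[
 \phi(r) := \min_{i\,:\,\gamma_{iu}\not\equiv 0}\, \gamma_{iu}^{-1}\bigl(\Gamma_{\mu,i}(\sigma(r))\bigr),
\]
(or simply $\phi=\id$ if all external gains vanish). As a finite minimum of $\Kinf$ functions, $\phi\in\Kinf$, and by construction it satisfies~\eqref{eq:11}.

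Finally, I would verify~\eqref{generalcond}. From the maximization structure of $\overline\Gamma_\mu$ and~\eqref{eq:11},
\[
 \overline\Gamma_{\mu,i}(\sigma(r),\phi(r))
 = \max\bigl\{\Gamma_{\mu,i}(\sigma(r)),\,\gamma_{iu}(\phi(r))\bigr\}
 = \Gamma_{\mu,i}(\sigma(r)) < \sigma_i(r),
\]
for every $r>0$ and each $i$, with the strict inequality coming directly from the $\Omega$-path property of $\sigma$. Theorem~\ref{LFtheo} then produces the ISS Lyapunov function $V(x)=\max_i\sigma_i^{-1}(V_i(x_i))$, and ISS of the interconnection follows from Theorem~\ref{thm:equivalence-ISS-and-nonsmooth-ISS Lyapunov-function}. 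The main obstacle is the construction of $\phi$, specifically the unboundedness of each $r\mapsto\Gamma_{\mu,i}(\sigma(r))$; this is precisely the point at which irreducibility of $\Gamma$ is used essentially, since otherwise a row of $\Gamma$ could be identically zero and the $\Kinf$ requirement on $\phi$ demanded by~\eqref{eq:11} could fail.
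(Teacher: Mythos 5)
Your proposal is correct and follows essentially the same route as the paper: obtain $\sigma$ from Theorem~\ref{path}~(ii), use irreducibility together with (M1)/(M3) to see that each $\Gamma_{\mu,i}(\sigma(\cdot))$ is unbounded so that a $\phi\in\Kinf$ satisfying \eqref{eq:11} exists, and then note that the maximization structure turns \eqref{eq:11} into \eqref{generalcond}, so Theorem~\ref{LFtheo} applies. The only difference is that you make $\phi$ explicit as $\min_i\gamma_{iu}^{-1}\circ\Gamma_{\mu,i}\circ\sigma$ (legitimate under the paper's standing convention that nonzero gains are of class $\Kinf$), where the paper merely asserts existence of such a $\phi$.
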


\begin{proof}
 By Theorem~\ref{path} an $\Omega(\Gamma_\mu)$-path $\sigma$ exists.
 Note that by irreducibility, (M1), and (M3) it follows that
 $\Gamma_\mu(\sigma)$ is unbounded in all components.  Hence $\phi
 \in \Kinf$ satisfying~\eqref{eq:11} exists and we obtain
\[ \sigma(r) > \max \left\{ \ \Gamma_\mu(\sigma(r)), \gamma_u(\phi(r))
   \right\} \,.
\]
This is \eqref{generalcond} for the
case of maximization of gains in $u$. The claim follows from
Theorem~\ref{LFtheo}.
\end{proof}

In the next result observe that (M3) is not always necessary for
the $u$-component of $\mu$.

\begin{corollary}[Separation in gains]
\label{multcor}
Consider the interconnected system $\Sigma$ given by \eqref{eq:3},
\eqref{eq:4} where each of the subsystems $\Sigma_i$ has an ISS
Lyapunov function $V_i$ and the corresponding gain matrix $\Gamma$ is
given by \eqref{Gammadef}. Assume that $\Gamma$ is irreducible and
that the gains in the ISS condition are separated, that is, there exist
$\mu\in\MAF_{n}^{n}$, $c\in \R, c>0$, and $\gamma_u\in \Kinf$ such
that
\begin{equation}\label{eq:separation}
  \ol{\Gamma}_\mu(V_1(x_1),\ldots, V_n(x_n),\norm{u}) =
  \left(c+\gamma_u(\norm{u})\right)\ \Gamma_\mu(V_1(x_1),\ldots, V_n(x_n)) \,.
\end{equation}
If there exists an $\alpha \in \Kinf$ such
that for $D=\diag(c\cdot \id  + \id \cdot \alpha)$ the gain operator $\Gamma_\mu$
satisfies the strong small gain condition
\[ D \circ \Gamma_\mu(s) \not \geq s \] then the interconnected
system is ISS and an ISS Lyapunov function is given by \eqref{Vdef},
where $\sigma \in \Kinf^n$ is an arbitrary $\Omega$-path with
respect to $D \circ \Gamma_\mu(s)$.
\end{corollary}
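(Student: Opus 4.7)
The plan is to mirror the proof of Corollary~\ref{sumcor}, substituting the multiplicative structure in~\eqref{eq:separation} for the additive one. First I would apply Theorem~\ref{path}(ii) to the operator $D \circ \Gamma_\mu$: irreducibility of $\Gamma$ carries over to $D \circ \Gamma_\mu$ (the diagonal rescaling $D$ does not alter the adjacency matrix), and $D \circ \Gamma_\mu \ngeq \id$ is precisely the strong small gain assumption of the corollary. This yields an $\Omega$-path $\sigma \in \Kinf^n$ with respect to $D \circ \Gamma_\mu$, i.e.\ for every $i$ and every $r>0$,
\begin{equation*}
\sigma_i(r) \;>\; D \circ \Gamma_{\mu,i}(\sigma(r)) \;=\; \Gamma_{\mu,i}(\sigma(r))\bigl(c + \alpha(\Gamma_{\mu,i}(\sigma(r)))\bigr).
\end{equation*}

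Next, as in the proof of Corollary~\ref{sumcor}, irreducibility of $\Gamma$ together with (M1) and (M3) implies that each continuous map $r \mapsto \Gamma_{\mu,i}(\sigma(r))$ is unbounded on $(0,\infty)$. Hence
\begin{equation*}
r \;\mapsto\; \min_{i=1,\ldots,n} \alpha\bigl(\Gamma_{\mu,i}(\sigma(r))\bigr)
\end{equation*}
is continuous, positive for $r > 0$, vanishes at $0$, and diverges as $r \to \infty$. This allows me to choose $\phi \in \Kinf$ small enough that $\gamma_u(\phi(r)) \leq \min_{i} \alpha(\Gamma_{\mu,i}(\sigma(r)))$ for all $r \geq 0$.

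Combining these two estimates yields, for every $i$ and every $r > 0$,
\begin{equation*}
\ol\Gamma_{\mu,i}(\sigma(r),\phi(r)) \;=\; \bigl(c + \gamma_u(\phi(r))\bigr)\Gamma_{\mu,i}(\sigma(r)) \;\leq\; \bigl(c + \alpha(\Gamma_{\mu,i}(\sigma(r)))\bigr)\Gamma_{\mu,i}(\sigma(r)) \;<\; \sigma_i(r),
\end{equation*}
which is exactly condition~\eqref{generalcond}. An appeal to Theorem~\ref{LFtheo} then produces an ISS Lyapunov function of the form~\eqref{Vdef}, completing the proof.

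The only delicate step I anticipate is the second one, the componentwise unboundedness of $\Gamma_\mu(\sigma(r))$. Irreducibility of $\Gamma$ is essential here: for a reducible gain matrix some row of $\Gamma$ could vanish identically, in which case the corresponding $\alpha(\Gamma_{\mu,i}(\sigma(r)))$ would be zero and no admissible $\phi \in \Kinf$ with the required domination property would exist. Under the stated hypotheses, however, the argument recorded in the proof of Corollary~\ref{sumcor} applies verbatim, and the rest of the proof is a routine bookkeeping exercise around~\eqref{eq:separation}.
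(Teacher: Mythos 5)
Your argument is correct and coincides with the paper's own proof: the paper likewise observes that irreducibility passes to $D\circ\Gamma_\mu$, invokes Theorem~\ref{path}~(ii) to obtain an $\Omega$-path for $D\circ\Gamma_\mu$, chooses $\phi\in\Kinf$ with $\phi(r)\leq\min_i\gamma_u^{-1}\circ\alpha\circ\Gamma_{\mu,i}(\sigma(r))$ (using irreducibility, (M1), (M3) for the componentwise unboundedness you flag), and concludes $\overline\Gamma_\mu(\sigma(r),\phi(r))<\sigma(r)$ so that Theorem~\ref{LFtheo} applies. No gaps; your noted ``delicate step'' is handled in the paper exactly as you handle it.
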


\begin{proof}
If $\Gamma_\mu$ is irreducible, then also $D\circ \Gamma_\mu$ is irreducible and so
by Theorem~\ref{path}~(ii) an $\Omega(D\circ \Gamma_\mu)$-path $\sigma$
exists.
Let $\phi \in \Kinf$ be such that for all $r\geq 0$
\[
\phi(r) \leq \min_{i=1,\ldots,n} \{ \gamma_u^{-1} \circ \alpha \circ \Gamma_{\mu,i}(\sigma(r)) \}\,,
\]
where as in the previous corollaries we appeal to irreducibility,
(M1), and (M3).
Then for each $i$ we have
\[\sigma_i(r) > \Gamma_{\mu,i}(\sigma(r)) (c+ \alpha(\Gamma_{\mu,i}(\sigma(r))))
\geq
\Gamma_{\mu,i}(\sigma(r)) (c+\gamma_u\circ\phi(r))\] and hence
\[\sigma(r) > (c+ \gamma_u(\phi(r)))\Gamma_\mu(\sigma(r)) = \overline{\Gamma}_\mu(\sigma(r),\phi(r)) \]
and the assertion follows from \eqref{eq:separation} and
Theorem~\ref{LFtheo}.
\end{proof}

\section{The reducible case and scaling}
\label{sec:reducible}

The results that have been obtained so far concern mostly strongly
connected networks, that is, networks with an irreducible gain
operator. Already in \cite{SoT95} it has been shown that cascades
of ISS systems are ISS. Cascades are a special case of networks
where the gain matrix is reducible. In this section we briefly
explain how a Lyapunov function for a network that is not strongly 
connected may
be constructed based on the construction for the strongly
connected components of the network. Another approach would be to
construct the $\Omega$-path for reducible operators $\Gamma_{\mu}$
as has been done in \cite{R08-positivity} using assumption (M4). 

It is well known, that
if the network is not strongly connected, or equivalently if the
gain matrix $\Gamma$ is reducible, then $\Gamma$ may be brought in
upper block triangular form via a permutation of the vertices of
the network as in the nonnegative matrix case
\cite{BeP79,DRW-mcss}. After this transformation
$\overline{\Gamma}$ is of the form
\begin{equation}
\label{eq:reducedform} \overline{\Gamma} = \begin{bmatrix}
\Upsilon_{11} & \Upsilon_{12} & \ldots & \Upsilon_{1d} & \Upsilon_{1u} \\
0 & \Upsilon_{22} & \ldots & \Upsilon_{2d} & \Upsilon_{2u}\\
\vdots & & \ddots & &&\\
0 & \ldots & 0 & \Upsilon_{dd}& \Upsilon_{du}
\end{bmatrix} \,,
\end{equation}
where each of the blocks  on the diagonal $\Upsilon_{jj}\in (\Kinf
\cup \{ 0 \})^{d_j\times d_j}$ , $j=1,\ldots,d$, is either
irreducible or $0$. Let $q_j = \sum_{l=1}^{j-1} d_l$, with the
convention that $q_1=0$. We denote the states corresponding to the
strongly connected components by
\[ z_j^T = \begin{bmatrix}
x_{q_j + 1}^T & x_{q_j + 2}^T & \ldots & x_{q_{j+1}}^T
\end{bmatrix}\,.\]
We will show that in order to obtain an overall ISS Lyapunov
function it is sufficient to construct ISS Lyapunov functions for
each of the irreducible blocks (where the respective states with
higher indices are treated as inputs). The desired result is an
iterative application of the following observation.
\begin{lemma}
\label{lem:reduciblestep1} Let a gain matrix $\overline{\Gamma}
\in \left(\Kinf \cup \{ 0 \} \right)^{2\times 3}$ be given by
\begin{equation}
\label{eq:reduciblestep1} \overline{\Gamma} = \begin{bmatrix} 0 &
\gamma_{12} & \gamma_{1u} \\ 0 & 0 & \gamma_{2u}
\end{bmatrix} \,,
\end{equation}
and let $\overline{\Gamma}_\mu$ be defined by  $\mu \in \MAF_3^2$.
Then there exist an $\Omega$-path $\sigma$ and  $\phi \in \Kinf$
such that \eqref{generalcond} holds.
\end{lemma}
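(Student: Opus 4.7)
Because the first column of $\ol\Gamma$ is zero and $\gamma_{22}\equiv 0$, the operator $\ol\Gamma_{\mu}$ collapses to
\[
\ol\Gamma_{\mu}(s_{1},s_{2},r)=\begin{pmatrix}\mu_{1}(0,\gamma_{12}(s_{2}),\gamma_{1u}(r))\\ \mu_{2}(0,0,\gamma_{2u}(r))\end{pmatrix}.
\]
The second coordinate depends only on $r$, and the first depends on $(s_{2},r)$ but not on $s_{1}$. The plan is therefore to build the data in the order $\phi\leadsto\sigma_{2}\leadsto\sigma_{1}$, majorizing at each step a continuous, non-decreasing, vanishing-at-zero scalar function of $r$.

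Concretely, I would fix $\phi:=\id$ and set $h_{2}(r):=\mu_{2}(0,0,\gamma_{2u}(\phi(r)))$. This is continuous and non-decreasing with $h_{2}(0)=0$. By a standard smoothing argument (take the non-decreasing envelope, convolve with a compactly supported non-negative mollifier, then correct near the origin to preserve vanishing at $0$), one produces a $C^{1}$ function $\tilde h_{2}\ge h_{2}$ on $[0,\infty)$ with $\tilde h_{2}(0)=0$ and $\tilde h_{2}'$ non-negative and locally bounded. Define
\[
\sigma_{2}(r):=\tilde h_{2}(r)+r.
\]
Then $\sigma_{2}\in\Kinf$, $\sigma_{2}'\ge 1$ and is bounded on compact subsets of $(0,\infty)$, so $\sigma_{2}^{-1}$ is $C^{1}$ on $(0,\infty)$ with derivative bounded above and bounded away from zero on compact subsets of $(0,\infty)$, meeting the regularity in Definition~\ref{def:omega-path}. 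The extra $+r$ ensures the strict inequality $\sigma_{2}(r)>h_{2}(r)$ for every $r>0$, even when $\gamma_{2u}\in\K\setminus\Kinf$ (or $\gamma_{2u}\equiv 0$).

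With $\sigma_{2}$ in hand, I would repeat the same construction on $h_{1}(r):=\mu_{1}(0,\gamma_{12}(\sigma_{2}(r)),\gamma_{1u}(\phi(r)))$, which is again continuous, non-decreasing, and vanishes at $r=0$, to obtain $\sigma_{1}\in\Kinf$ with $\sigma_{1}^{-1}$ of the required regularity and $\sigma_{1}(r)>h_{1}(r)$ for all $r>0$. By construction, $\ol\Gamma_{\mu}(\sigma(r),\phi(r))<\sigma(r)$ componentwise for every $r>0$, which is precisely \eqref{generalcond}. Using (M2') one also has $\Gamma_{\mu}(\sigma(r))=\ol\Gamma_{\mu}(\sigma(r),0)\le\ol\Gamma_{\mu}(\sigma(r),\phi(r))<\sigma(r)$, so $\sigma=(\sigma_{1},\sigma_{2})$ is an $\Omega$-path with respect to $\Gamma_{\mu}$.

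The only mildly delicate point is the smoothing step that produces each $\tilde h_{i}$ as a $C^{1}$ majorant of $h_{i}$ with $\tilde h_{i}(0)=0$; everything else is a direct verification. Unlike the irreducible situation of Theorem~\ref{path}(ii), no Knaster--Kuratowski--Mazurkiewicz-type topological argument is needed here, because the triangular structure of $\Gamma$ decouples the two coordinates and reduces the problem to two successive one-dimensional majorization problems.
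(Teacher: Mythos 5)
Your argument is correct and is essentially the paper's own proof: both exploit the triangular structure to construct the data coordinatewise, majorizing a scalar nondecreasing function of $r$ by a $\Kinf$ function regular enough to satisfy conditions (i)--(ii) of Definition~\ref{def:omega-path} (the paper takes $\tilde\eta_1\geq\eta_1(r)=\mu_1(\gamma_{12}(r),\gamma_{1u}(r))$, sets $\sigma(r)=(2\tilde\eta_1(r),\,r)^T$ and $\phi(r)=\min\{r,\eta_2^{-1}(r/2)\}$). The only difference is how the freedom is allocated --- you fix $\phi=\id$ and enlarge $\sigma_2$, while the paper fixes $\sigma_2=\id$ and shrinks $\phi$ via $\eta_2^{-1}$ --- an immaterial reparametrization (yours incidentally avoids inverting $\eta_2$, which is convenient when $\gamma_{2u}\equiv 0$).
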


\begin{proof}
By construction the maps $\eta_1: r\mapsto \mu_1(\gamma_{12}(r),
\gamma_{1u}(r))$ and $\eta_2 : r \mapsto \mu_2(\gamma_{12}(u))$
are in $\Kinf$. Choose a $\Kinf$-function $\tilde \eta_1 \geq
\eta_1$, such that $\tilde \eta_1$ satisfies the conditions (i)
and (ii) in Definition~\ref{def:omega-path}. Define $\sigma(r) =
\begin{bmatrix} 2 \tilde \eta_1(r) & r \end{bmatrix}^T$ and
$\phi(r) := \min \{ r, \eta_2^{-1}(r/2)\}$. Then it is a
straightforward calculation to check that the assertion holds.
\end{proof}

The result is now as follows.

\begin{proposition}
 \label{prop:reducible}
 Consider a simply connected interconnected system $\Sigma$ given by
 \eqref{eq:3}, \eqref{eq:4} where each of the subsystems $\Sigma_i$
 has an ISS Lyapunov function $V_i$, the corresponding gain matrix is
 given by \eqref{eq:5}, and $\mu=(\mu_1,\ldots,\mu_n)^T$ is given
 by~\eqref{ISScond}.  Assume that the gain matrix
 $\overline{\Gamma}$ is in the reduced form \eqref{eq:reducedform}.
 If for each $j=1,\ldots, d-1$ there exists an ISS Lyapunov function
 $W_j$ for the state $z_j$ with respect to the inputs $z_{j+1},
 \ldots, z_d, u$ then there exists an ISS Lyapunov function $V$ for
 the state $x$ with respect to the input $u$.
\end{proposition}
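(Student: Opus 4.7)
The plan is to prove the proposition by a finite downward induction on the block index $j$, reducing it step by step to $d-1$ applications of Lemma~\ref{lem:reduciblestep1} combined with Theorem~\ref{LFtheo} in the $n=2$ case.

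First, I would handle the initialization. The last state $z_d$ depends only on itself and on $u$ because $\overline{\Gamma}$ is in the block upper triangular form \eqref{eq:reducedform}. Since $\Upsilon_{dd}$ is either irreducible or zero, Theorem~\ref{LFtheo} applied to the irreducible block (respectively, the trivial observation that a single ISS subsystem already has a Lyapunov function) yields an ISS Lyapunov function $\widetilde{W}_d$ for $z_d$ with external input $u$. So effectively I may assume that an ISS Lyapunov function $\widetilde{W}_d$ is at hand; note that the hypothesis of the proposition quantifies over $j=1,\ldots,d-1$, but the last component is covered in this way.

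The inductive step is the heart of the argument. Assume inductively that we have constructed an ISS Lyapunov function $\widetilde{W}_{j+1}$ for the composite state $\zeta_{j+1} := (z_{j+1}^{T},\ldots,z_d^T)^T$ with respect to the external input $u$, with some gain $\widetilde{\gamma}_{(j+1)u} \in \Kinf$. I want to combine $W_j$ and $\widetilde{W}_{j+1}$ into an ISS Lyapunov function $\widetilde{W}_j$ for $(z_j^T,\zeta_{j+1}^T)^T$ with input $u$. The pair $(W_j,\widetilde{W}_{j+1})$ fits the setting of Lemma~\ref{lem:reduciblestep1} once I package the gains correctly: the $(1,1)$ and $(2,1)$ entries are zero (by the block upper triangular structure and because $W_j$ has no self gain in its own block-component beyond what is already absorbed in the ISS estimate for $z_j$), the entry $(2,2)$ is zero, and $(2,u) = \widetilde{\gamma}_{(j+1)u}$. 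The only non-trivial entry is $(1,2)$: the gain from $\widetilde{W}_{j+1}$ into $W_j$. Because $W_j$ originally sees the inputs $z_{j+1},\ldots,z_d$ separately through gains $\gamma_{j\ell}$ (via the monotone aggregation $\mu_j$), and because $\widetilde{W}_{j+1}$ is proper and positive definite, each $V_\ell(z_\ell)$ (and hence each $\gamma_{j\ell}(V_\ell(z_\ell))$) can be dominated by a $\Kinf$ function of $\widetilde{W}_{j+1}(\zeta_{j+1})$. Aggregating through $\mu_j$ (and using (M2'), (M3), continuity, and properness) yields a single $\Kinf$ gain from $\widetilde{W}_{j+1}$ into $W_j$, which plays the role of $\gamma_{12}$ in \eqref{eq:reduciblestep1}. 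The gain $(1,u)$ is absorbed in the same way from the original $\gamma_{ju}$.

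With the $2\times 3$ reduced gain matrix in place, Lemma~\ref{lem:reduciblestep1} furnishes an $\Omega$-path $\sigma=(\sigma_1,\sigma_2)$ and a function $\phi\in\Kinf$ satisfying \eqref{generalcond} for the two-block system. Theorem~\ref{LFtheo} then tells me that
\[
\widetilde{W}_j(z_j,\zeta_{j+1}) \;:=\; \max\bigl\{\sigma_1^{-1}(W_j(z_j)),\;\sigma_2^{-1}(\widetilde{W}_{j+1}(\zeta_{j+1}))\bigr\}
\]
is an ISS Lyapunov function for $(z_j^T,\zeta_{j+1}^T)^T$ with respect to $u$. Iterating downward from $j=d-1$ to $j=1$ produces the desired $V := \widetilde{W}_1$ for the full state $x$.

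The main obstacle is the bookkeeping at each inductive step: I must verify that $\widetilde{W}_j$ again satisfies Assumption~\ref{A1} (it does, as a max of compositions of Lipschitz $\Kinf$ functions with Lyapunov functions satisfying the assumption, and $\sigma_i^{-1}$ is locally Lipschitz away from $0$ by Definition~\ref{def:omega-path}), and I must extract a single $\Kinf$ gain from $\widetilde{W}_{j+1}$ to $W_j$ out of the per-component gains $\gamma_{j\ell}$ fed through $\mu_j$. Both issues are routine applications of properness of the Lyapunov functions and the axioms of monotone aggregation, but they are the places where care is needed.
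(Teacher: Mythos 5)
Your proof is correct and is essentially the paper's own argument: an induction over the diagonal blocks that reduces each step to the two-block situation of Lemma~\ref{lem:reduciblestep1}, after which Theorem~\ref{LFtheo} yields the max-type composite Lyapunov function. That you run the induction from the bottom block upward (aggregating the tail $\zeta_{j+1}$ and prepending block $j$) rather than, as in the paper, aggregating the leading $d-1$ blocks and appending the last one, is immaterial; and your repackaging of the gains $\chi_{j\ell}$ acting on $W_{j+1},\ldots,W_d$ into a single $\Kinf$ gain on $\widetilde W_{j+1}$, using properness of the Lyapunov functions and (M2'), is exactly the bookkeeping the paper leaves implicit.

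One caveat concerns your initialization. You obtain $\widetilde W_d$ by applying Theorem~\ref{LFtheo} to the block $\Upsilon_{dd}$, but this requires an $\Omega$-path and hence a small gain condition on $\Upsilon_{dd}$, which is not among the hypotheses of the proposition. The paper's proof instead tacitly assumes that a Lyapunov function for the last block with respect to $u$ is available (its displayed hypothesis already involves $W_d$), i.e.\ the quantifier in the statement is effectively $j=1,\ldots,d$; with that reading your induction goes through verbatim, and in the corollaries where the proposition is used, the small gain condition on the whole operator supplies $W_d$ via Corollary~\ref{sumcor} or Corollary~\ref{maxcor} rather than via an extra, unstated assumption inside this proof.
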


\begin{proof}
 By assumption for each $j=1,\ldots, d-1$ there exist gain functions
 $\chi_{jk} \in \Kinf$ and $\chi_{ju} \in \Kinf$ 
 and MAFs $\tilde{\mu}_j$
 such that
 \begin{multline*}
     W_j(z_j) \geq \tilde \mu_j(\chi_{jj+1}(W_{j+1}(z_{j+1})),\ldots,
     \chi_{jd}(W_d(z_d)), \chi_{ju}(\|u\|)) \\ \Longrightarrow  \nabla
     W_j(z_j) f_j(z_j, z_{j+1},\ldots, z_d,u) < - \tilde
     \alpha_j(\|z_j\|)\,.
   \end{multline*}
We now argue by induction. If $d=1$, there is nothing to show. If
the result is shown for $d-1$ blocks, consider a gain matrix as in
\eqref{eq:reducedform}. By assumption there exists an ISS Lyapunov
function $V_{d-1}$ such that
\begin{multline*}
 V_{d-1} (z_{d-1}) \geq \mu_1( \gamma_{12} (V_d(z_d)), \gamma_{1u}
 (\|u\|)) \\ \Longrightarrow \nabla V_{d-1}(z_{d-1})
 f_{d-1}(z_{d-1},z_d,u) \leq - \alpha_{d-1} (\| z_{d-1}\|) \,.
\end{multline*}
As the remaining part has only external inputs, we see that
$\overline{\Gamma}$ is of the form \eqref{eq:reduciblestep1} and
so Lemma~\ref{lem:reduciblestep1} is applicable. This shows that
the assumptions of Theorem~\ref{LFtheo} are met and so a Lyapunov
function for the overall system is given by \eqref{Vdef}.
\end{proof}

It is easy to see that the assumption ${\Gamma}_\mu \not \geq \id$
(or ${\Gamma}_\mu \circ D\not \geq \id$) is equivalent to the
requirement that the blocks $\Upsilon_{jj}$ on the diagonal
satisfy the (strong) small gain
condition~\eqref{eq:small-gain-condition}/\eqref{eq:strong-small-gain-condition}.
Thus we immediately obtain the following statements.

\begin{corollary}[Summation of gains]
 \label{cor:addition_of_gains}
 Consider the interconnected system $\Sigma$ given by \eqref{eq:3},
 \eqref{eq:4} where each of the subsystems $\Sigma_i$ has an ISS
 Lyapunov function $V_i$ and the corresponding gain matrix is given
 by \eqref{Gammadef}. Assume that the ISS condition is additive in
 the gains, that is,
\begin{equation}\label{eq:additive-external-gain}
  \ol{\Gamma}_{\mu,i}(V_1(x_1),\ldots, V_n(x_n),\norm{u}) =
  \sum_{j=1}^{n}\gamma_{ij}(V_j(x_j)) + \gamma_{iu}(\norm{u}) \,.
\end{equation}
If there exists an $\alpha \in \Kinf$ such that for
$D=\diag(\id + \alpha)$ the gain operator $\Gamma_\mu$ satisfies the
strong small gain condition
\[ D \circ \Gamma_\mu(s) \not \geq s \] then the interconnected system
is ISS.
\end{corollary}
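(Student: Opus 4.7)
The plan is to reduce to the strongly connected case, where Corollary~\ref{sumcor} applies, and then invoke Proposition~\ref{prop:reducible} to reassemble the pieces. After a permutation of indices, bring the gain matrix $\overline{\Gamma}$ into the upper block-triangular form \eqref{eq:reducedform} with diagonal blocks $\Upsilon_{jj}$, each either irreducible or the zero $1\times 1$ matrix. Write $z_j$ for the corresponding aggregated state.

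The key step is to verify that the strong small gain condition inherits to each diagonal block $\Upsilon_{jj,\mu}$. Fix $j$ and restrict attention to vectors $s \in \Rnp$ whose components outside block $j$ all vanish. By the additive structure \eqref{eq:additive-external-gain}, for indices $i$ lying in block $j$ one has $\Gamma_{\mu,i}(s) = \Upsilon_{jj,\mu,i}(s|_{j})$, while for indices $i$ outside block $j$ one has $s_i = 0 \leq (D\circ \Gamma_\mu(s))_i$. Since $D\circ \Gamma_\mu \not\geq \id$, the index at which a strict inequality is attained must lie within block $j$; hence $D_j \circ \Upsilon_{jj,\mu} \not\geq \id$ on $\R_+^{d_j}$, where $D_j$ is the restriction of $D$.

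Next, for each irreducible diagonal block apply Corollary~\ref{sumcor}: the ISS condition for the subsystems in block $j$ is still additive in the inputs, so grouping the contributions from $z_{j+1},\ldots,z_d,u$ into one combined external input (whose component-wise gain is again a sum of $\Kinf$ functions) allows Corollary~\ref{sumcor} to produce an ISS Lyapunov function $W_j$ for $z_j$ with respect to this combined external input. For a trivial $1\times 1$ zero block, simply take $W_j = V_i$, which is already an ISS Lyapunov function in the sense required by Proposition~\ref{prop:reducible}. Finally, Proposition~\ref{prop:reducible} applied iteratively from $j=d$ down to $j=1$ assembles the $W_j$ into a single ISS Lyapunov function for the whole system.

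The main obstacle is the bookkeeping in the first two steps: checking that restricting $s$ to be supported on a single block cleanly recovers the block operator and that the residual gains from higher blocks and $u$ still fit the additive hypothesis of Corollary~\ref{sumcor}. The additivity in \eqref{eq:additive-external-gain} is exactly what makes this decomposition transparent; once it is in place, the induction via Proposition~\ref{prop:reducible} is routine.
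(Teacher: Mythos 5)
Your proposal is correct and follows essentially the same route as the paper: permute $\ol\Gamma$ into the reduced form \eqref{eq:reducedform}, apply Corollary~\ref{sumcor} to each irreducible diagonal block (treating higher blocks and $u$ as external inputs), and assemble via Proposition~\ref{prop:reducible}. Your block-supported-vector argument simply fills in the step the paper dismisses as ``easy to see,'' namely that $D\circ\Gamma_\mu\not\geq\id$ passes to the diagonal blocks $\Upsilon_{jj,\mu}$.
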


\begin{proof}
 After permutation $\ol\Gamma$ is of the
 form~\eqref{eq:reducedform}. For each of the diagonal blocks
 Corollary~\ref{sumcor} is applicable and the result follows from
 Proposition~\ref{prop:reducible}.
\end{proof}

\begin{corollary}[Maximization of gains]
 \label{cor:maximization_of_gains}
 Consider the interconnected system $\Sigma$ given by \eqref{eq:3},
 \eqref{eq:4} where each of the subsystems $\Sigma_i$ has an ISS
 Lyapunov function $V_i$ and the corresponding gain matrix is given
 by \eqref{Gammadef}. Assume that the gains enter the ISS condition
 via maximization, that is,
\begin{equation}\label{eq:max-of-gains}
  \ol{\Gamma}_{\mu,i}(V_1(x_1),\ldots, V_n(x_n),\norm{u}) =
  \max\left\{\gamma_{i1}(V_1(x_1)) ,\ldots, \gamma_{in}(V_n(x_n)) , \gamma_{iu}(\norm{u}) \right\} \,.
\end{equation}
If the gain operator $\Gamma_\mu$ satisfies the small gain condition
\[ \Gamma_\mu(s) \not \geq s \] then the interconnected system is
ISS.
\end{corollary}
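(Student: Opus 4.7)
The plan is to follow exactly the same reduction pattern as in the proof of Corollary~\ref{cor:addition_of_gains}, but invoking Corollary~\ref{maxcor} block by block instead of Corollary~\ref{sumcor}.

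First, I would permute the indices so that $\ol\Gamma$ is brought into the upper block triangular form~\eqref{eq:reducedform}, where each diagonal block $\Upsilon_{jj} \in (\Kinf\cup\{0\})^{d_j\times d_j}$ is either irreducible or a $1\times1$ zero block. Since $\mu = \max$ commutes with taking submatrices in the natural way (the $j$th block of $\Gamma_\mu$ acts componentwise on $\pi_{I_j}$ as $s\mapsto \max_{k\in I_j}\gamma_{ik}(s_k)$), the hypothesis $\Gamma_\mu\not\geq\id$ immediately implies that each block operator $(\Upsilon_{jj})_{\max}\not\geq\id$ on $\R_+^{d_j}$; indeed, if some $s\neq 0$ in $\R_+^{d_j}$ satisfied $(\Upsilon_{jj})_{\max}(s)\geq s$, extending $s$ by zeros in all other coordinates would yield a violation of the global small gain condition, using the triangular structure.

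Second, I would construct an ISS Lyapunov function for each block $z_j$ with respect to the inputs $z_{j+1},\ldots,z_d,u$ by downward induction on $j$. For an irreducible block, Corollary~\ref{maxcor} applies directly (with the external input comprising $z_{j+1},\ldots,z_d,u$, all entering via maximization), producing $W_j$ together with gains $\chi_{jk},\chi_{ju}\in\Kinf$ and a MAF $\tilde\mu_j = \max$. For a $1\times 1$ zero diagonal block, $z_j$ coincides with a single subsystem $\Sigma_i$ that has no self-gain, so the given $V_i$ itself serves as $W_j$ with the remaining gains against $z_{j+1},\ldots,z_d,u$ inherited from~\eqref{ISScond}.

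Third, I would invoke Proposition~\ref{prop:reducible} to assemble these block Lyapunov functions into an overall ISS Lyapunov function $V$ for $x$ with respect to $u$, which yields ISS of $\Sigma$.

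The only step that requires a little care is the first one, namely verifying that the global condition $\Gamma_\mu\not\geq\id$ really does pass to each diagonal block in the maximum formulation; everything else is routine application of Corollary~\ref{maxcor} and Proposition~\ref{prop:reducible}. I expect no genuine obstacle, since under $\mu=\max$ the block triangular structure decouples the fixed-point inequalities cleanly, and unlike the summation case no robustness margin $D$ is needed to feed external gains through the maximum.
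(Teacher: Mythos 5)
Your proposal is correct and follows essentially the same route as the paper: permute $\ol\Gamma$ into the reduced form \eqref{eq:reducedform}, apply Corollary~\ref{maxcor} to each irreducible diagonal block (zero blocks being trivial), and assemble via Proposition~\ref{prop:reducible}. The zero-extension argument you give for passing $\Gamma_\mu\not\geq\id$ to the diagonal blocks is exactly the point the paper dismisses as ``easy to see,'' so you have merely made explicit what the paper leaves implicit.
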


\begin{proof}
 After permutation $\ol\Gamma$ is of the
 form~\eqref{eq:reducedform}. For each of the diagonal blocks
 Corollary~\ref{maxcor} is applicable and the result follows from
 Proposition~\ref{prop:reducible}.
\end{proof}

\section{Applications of the general small gain theorem}
\label{sec:appl-gener-small}

In Section \ref{examples} we have presented several examples of
functions $\mu_i$, $\gamma_i$ and gain operators $\Gamma_{\mu}$,
$\overline{\Gamma}_{\mu}$. Here we will show how our main results
apply to these examples. Before we proceed, let us consider the
special case of homogeneous $\Gamma_{\mu}$ (of degree 1)
\cite{GaG04}. Here $\Gamma_{\mu}$ is homogeneous of degree one if
for any $s\in\Rnp$ and any $r>0$ we have
$\Gamma_\mu(rs)=r\Gamma_\mu(s)$.
\begin{proposition}[Explicit paths and Lyapunov functions for
 homogeneous gain operators]
 \label{prop:homogen}
 Let $\Sigma$ in \eqref{eq:2} be a strongly connected network of
 subsystems \eqref{eq:1} and $\Gamma_{\mu}$,
 $\overline{\Gamma}_{\mu}$ be the corresponding gain operators. Let
 $\Gamma_{\mu}$ be homogeneous and let $\ol \Gamma_{\mu}$ satisfy one
 of the conditions \eqref{eq:additive-external-gain},
 \eqref{eq:max-of-gains}, or \eqref{eq:separation}. If $\Gamma_\mu$
 satisfies the strong small gain condition
 \eqref{eq:strong-small-gain-condition} (\eqref{eq:small-gain-condition}
 in case of \eqref{eq:max-of-gains}) then the interconnection
 $\Sigma$ is ISS, moreover there exists a (nonlinear) eigenvector
 $0<s\in\R^n$ of $\Gamma_\mu$ such that $\Gamma_\mu(s)=\lambda s$
 with $\lambda<1$ and an ISS Lyapunov function for the network is
 given by
 \begin{equation}
   \label{eq:LF-for-homogen}
   V(x)=\max_i\{V_i(x_i)/s_i\}.
 \end{equation}
\end{proposition}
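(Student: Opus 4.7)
The plan is to bypass the delicate topological construction of Section~\ref{sec:pathconstruction} entirely by exploiting homogeneity: a positive eigenvector of $\Gamma_\mu$ should provide, in one stroke, both an explicit $\Omega$-path (a ray) and, via Theorem~\ref{LFtheo}, the Lyapunov function~\eqref{eq:LF-for-homogen}.

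First I would establish the existence of a strictly positive eigenvector $s\in\Rnp$, $s>0$, with $\Gamma_\mu(s)=\lambda s$ for some $\lambda>0$. Property (M1) of the MAFs together with the irreducibility of $\Gamma$ (strong connectedness of $\Sigma$) guarantee that $\Gamma_\mu$ maps $\Rnp\setminus\{0\}$ into itself, so by homogeneity the renormalization $s\mapsto\Gamma_\mu(s)/|\Gamma_\mu(s)|$ is a continuous self-map of the $(n-1)$-simplex $S_1$, and Brouwer's fixed-point theorem yields a fixed point. This gives an eigenpair $(\lambda,s)$ with $\lambda=|\Gamma_\mu(s)|>0$ and $s\in S_1$. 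To upgrade $s\geq 0$ to $s>0$, I would argue by irreducibility: if the zero set $I=\{i:s_i=0\}$ were non-empty, then $(\Gamma_\mu(s))_i=0$ for $i\in I$; by (M1) applied componentwise this forces $\gamma_{ij}(s_j)=0$ for all $j$, hence $\gamma_{ij}\equiv 0$ whenever $j\notin I$, contradicting the irreducibility of the adjacency matrix $A_\Gamma$.

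Next I would use the (strong) small gain hypothesis to pin down $\lambda<1$. In the max case SGC applied at $s$ gives $\Gamma_\mu(s)=\lambda s\ngeq s$, forcing $\lambda<1$; in the remaining two cases sSGC implies SGC and the same bound follows. With $\lambda<1$ in hand, the ray
\[\sigma(r):=r\,s,\qquad r\geq 0,\]
is immediately an $\Omega$-path with respect to $\Gamma_\mu$: the inverses $\sigma_i^{-1}(t)=t/s_i$ are linear with constant positive derivatives, so (i) and (ii) of Definition~\ref{def:omega-path} are trivial, and homogeneity yields $\Gamma_\mu(\sigma(r))=r\lambda s=\lambda\sigma(r)<\sigma(r)$ for every $r>0$.

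It then remains to cook up $\phi\in\Kinf$ making \eqref{generalcond} hold, treating the three admissible forms of $\ol\Gamma_\mu$ separately. Under \eqref{eq:additive-external-gain} I would select $\phi$ with $\gamma_{iu}(\phi(r))\leq\tfrac{1}{2}(1-\lambda)\,rs_i$ for every $i$; under \eqref{eq:max-of-gains} the choice $\gamma_{iu}(\phi(r))\leq\tfrac{1}{2}rs_i$ suffices; in the separation case \eqref{eq:separation} the sSGC applied at $s$ forces $c\lambda<1$ (evaluating $D\circ\Gamma_\mu$ at $s$ and using $\alpha>0$), so any $\phi$ with $(c+\gamma_u(\phi(r)))\lambda<1$ works. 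In each case \eqref{generalcond} follows, and Theorem~\ref{LFtheo} yields the Lyapunov function
\[V(x)=\max_i\sigma_i^{-1}(V_i(x_i))=\max_i V_i(x_i)/s_i,\]
which is exactly \eqref{eq:LF-for-homogen}. The main obstacle is the nonlinear Perron step: Brouwer gives a fixed point cheaply, but ruling out boundary fixed points via irreducibility and (M1), and extracting the scalar inequality $c\lambda<1$ in the separation case from the vector inequality sSGC, require the careful bookkeeping outlined above.
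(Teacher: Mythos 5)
Your core strategy coincides with the paper's proof: a positive eigenvector of the homogeneous operator, eigenvalue $\lambda<1$ forced by the small gain condition, the ray $\sigma(r)=rs$ as an $\Omega$-path, and Theorem~\ref{LFtheo}. Where the paper simply cites the generalized Perron--Frobenius theorem for irreducible monotone homogeneous operators and obtains ISS at the outset from Corollaries~\ref{cor:addition_of_gains}, \ref{cor:maximization_of_gains} and \ref{multcor}, you prove existence and positivity of the eigenvector directly (Brouwer's theorem applied to the normalized map on the simplex, plus the irreducibility/(M1) argument excluding boundary fixed points); this substitution is sound, and your explicit choices of $\phi$ verify \eqref{generalcond} correctly in the additive case \eqref{eq:additive-external-gain} and the maximization case \eqref{eq:max-of-gains}.

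The separation case \eqref{eq:separation} contains a genuine gap. With $\sigma(r)=rs$, condition \eqref{generalcond} reads $(c+\gamma_u(\phi(r)))\lambda\, r s< r s$, i.e.\ $(c+\gamma_u(\phi(r)))\lambda<1$ for all $r>0$; since $\gamma_u\in\Kinf$ and $\phi$ must be of class $\Kinf$, the left-hand side is unbounded in $r$, so no admissible $\phi$ exists and Theorem~\ref{LFtheo} cannot be invoked with the ray in this case. Moreover, your claim that the strong small gain condition evaluated at $s$ yields $c\lambda<1$ uses the wrong robustness operator: with the diagonal operator acting componentwise as $t\mapsto t+\alpha(t)$, which is the form \eqref{eq:strong-small-gain-condition} appearing in the hypothesis, evaluation at the eigenvector only gives $\lambda<1$; the bound $c\lambda<1$ follows only from the operator $t\mapsto (c+\alpha(t))\,t$ used in Corollary~\ref{multcor}. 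The paper's own proof treats this case by deducing ISS from Corollary~\ref{multcor}, whose $\Omega$-path is taken with respect to $D\circ\Gamma_\mu$ with that multiplicative robustness term --- a path that cannot be a ray, precisely because $\alpha$ is unbounded. To repair your argument, either invoke Corollary~\ref{multcor} (under its specific strong small gain condition) for the separation case, as the paper does, or restrict the explicit-$\phi$ verification via the ray to the additive and maximization cases.
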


\begin{proof}
 First note that one of the Corollaries \ref{cor:addition_of_gains},
 \ref{cor:maximization_of_gains}, or \ref{multcor} can be applied and
 the ISS property follows immediately. By the assumptions of the
 proposition we have an irreducible monotone homogeneous operator
 $\Gamma_\mu$ on the positive orthant $\R^n_+$. By the generalized
 Perron-Frobenius Theorem \cite{GaG04} there exists a positive
 eigenvector $s\in\R^n_+$. Its eigenvalue $\lambda$ is less than one,
 otherwise we have a contradiction to the small gain condition. The
 ray defined by this vector $s$ is a corresponding $\Omega$-path and
 by Theorem~\ref{LFtheo} we obtain \eqref{eq:LF-for-homogen}.
\end{proof}

One type of homogeneous operators arises from linear operators
through multiplicative coordinate transforms. In this case we can
further specialize the assumptions of the previous result.

\begin{lemma}
 \label{lemma:essentially-linear-is-homogeneous}
 Let $\alpha\in\Kinf$ satisfy\footnote{In other words,
   $\alpha(r)=r^{c}$ for some $c>0$.}
 $\alpha(ab)=\alpha(a)\alpha(b)$ for all $a,b\geq0$. Let
 $D=\diag(\alpha)$, $G\in\R_{+}^{n\times n}$, and $\Gamma_{\mu}$ be
 given by
 $$
 \Gamma_{\mu}(s) = D^{-1}(GD(s))\,.
 $$
 Then $\Gamma_{\mu}$ is homogeneous. Moreover, $\Gamma_{\mu}\ngeq\id$ if
 and only if the spectral radius of $G$ is less than one.
\end{lemma}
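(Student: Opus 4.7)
My plan is to prove the two claims separately, first homogeneity by direct computation using the multiplicativity of $\alpha$, and then the spectral radius equivalence using the nonlinear change of variables $v=D(s)$ to reduce the small gain condition to a statement about the linear map $G$.

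For the homogeneity claim, observe that the multiplicativity hypothesis on $\alpha$ yields $\alpha(rt) = \alpha(r)\alpha(t)$ for $r,t\geq 0$. Applied componentwise this gives $D(rs) = \alpha(r)D(s)$ for every $s\in\Rnp$ and $r>0$. Since $G$ is linear we obtain $GD(rs) = \alpha(r)GD(s)$. The same multiplicativity also holds for $\alpha^{-1}$ (since $\alpha^{-1}(ab) = \alpha^{-1}(a)\alpha^{-1}(b)$ follows by substituting $a=\alpha(x),b=\alpha(y)$), and $\alpha^{-1}(\alpha(r))=r$. Applying $D^{-1}$ componentwise therefore yields $\Gamma_{\mu}(rs) = D^{-1}(\alpha(r)GD(s)) = r\,D^{-1}(GD(s)) = r\Gamma_{\mu}(s)$, which is homogeneity of degree one.

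For the equivalence, the key observation is that $D$ is a componentwise order-preserving bijection of $\Rnp$. Hence $\Gamma_{\mu}(s)\geq s$ is equivalent to $D(\Gamma_{\mu}(s))\geq D(s)$, i.e., $GD(s)\geq D(s)$; setting $v=D(s)$, the existence of some $s\gneqq 0$ with $\Gamma_{\mu}(s)\geq s$ is equivalent to the existence of $v\in\Rnp\setminus\{0\}$ with $Gv\geq v$. Thus it suffices to show that this latter condition fails iff $\rho(G)<1$. If $\rho(G)<1$, then $G^{k}v\to 0$ for every $v\in\Rnp$, so $Gv\geq v$ would iterate to $G^{k}v\geq v$, forcing $v=0$; hence no such $v$ exists and $\Gamma_{\mu}\ngeq\id$. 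Conversely, if $\rho(G)\geq 1$, the Perron--Frobenius theorem for nonnegative matrices supplies an eigenvector $v\gneqq 0$ with $Gv=\rho(G)v\geq v$; then $s:=D^{-1}(v)$ satisfies $\Gamma_{\mu}(s)\geq s$, and the small gain condition fails.

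The only mildly subtle point is the backward direction, since one must invoke the nonnegative (not necessarily irreducible) version of Perron--Frobenius to ensure an eigenvector in $\Rnp\setminus\{0\}$ for the spectral radius; this is standard, see \cite{BeP79}. Everything else is routine bookkeeping using the two facts that $D$ and $D^{-1}$ are order isomorphisms of $\Rnp$ and that $\alpha,\alpha^{-1}$ are multiplicative.
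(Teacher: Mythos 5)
Your proof is correct, and for one half of the equivalence it takes a genuinely different route from the paper. The homogeneity computation and the direction ``$\rho(G)\geq 1$ implies failure of the small gain condition'' coincide with the paper's argument (transporting a nonnegative vector $\tilde s\neq 0$ with $G\tilde s\geq\tilde s$ through $D^{-1}$); the paper merely states the existence of such a vector, whereas you name the nonnegative Perron--Frobenius theorem explicitly, which is the right reference \cite{BeP79}. The difference lies in the direction ``$\rho(G)<1$ implies $\Gamma_\mu\ngeq\id$'': the paper perturbs $G$ to a strictly positive matrix, takes its Perron vector $\tilde s$ with $G\tilde s<\tilde s$, shows that $\sigma(r)=rD^{-1}(\tilde s)$ is an $\Omega$-path, and then invokes external results (\cite[Theorem 23]{DRW-mcss}, \cite[Prop.\ 4.1]{R08-positivity}) to pass from global attractivity of the origin under iteration of $\Gamma_\mu$ to the condition $\Gamma_\mu\ngeq\id$. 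You instead use the order isomorphism $v=D(s)$ to reduce the whole equivalence to the purely linear statement that no $v\in\Rnp\setminus\{0\}$ satisfies $Gv\geq v$, and settle that elementarily: $Gv\geq v$ iterates (by nonnegativity of $G$) to $G^kv\geq v$, and $G^k v\to 0$ forces $v=0$. Your argument is more self-contained, avoiding the citation of the $\Omega$-path/attractivity results; the paper's construction has the side benefit of explicitly exhibiting the ray $r\mapsto r\hat s$ as an $\Omega$-path, which fits the paper's broader machinery (and reappears, via Proposition~\ref{prop:homogen}, in the application to linear interconnections), but as a proof of the lemma's statement your shorter reduction is perfectly adequate.
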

\begin{proof}
 If the spectral radius of $G$ is less than one, then there exists a
 positive vector $\tilde s$ satisfying $G\tilde s<\tilde s$: Just add
 a small $\delta>0$ to every entry of $G$, so that the spectral
 radius $\rho(\tilde G)$ of $\tilde G$ is still less than
 one, due to continuity of the spectrum. Then there exists a
 Perron vector $\tilde s$ such that $G\tilde s< \tilde G
 \tilde s = \rho(\tilde G) \tilde s < \tilde s$.  Define $\hat s =
 D^{-1}(\tilde s)>0$ and observe that
 $\alpha^{-1}(ab)=\alpha^{-1}(a)\alpha^{-1}(b)$.  Then we have
 \begin{equation}
   \begin{aligned}[t]
     \Gamma_{\mu}(r\hat s) = D^{-1}(G D(r\hat s)) = D^{-1}(\alpha(r)
     GD(\hat s)) &= r D^{-1}( G\tilde s)\\ &< r D^{-1}(\tilde s) =
     r\hat s\,,
   \end{aligned}
   \label{eq:14}
 \end{equation}
 for all $r>0$. So an $\Omega$-path for $\Gamma_{\mu}$ is given by
 $\sigma(r)=r\hat s$ for $r\geq0$. Existence of an $\Omega$-path
 implies the small gain condition: The origin in $\Rnp$ is globally
 attractive with respect to the system $s^{k+1}=\Gamma_{\mu}(s^{k})$,
 as can be seen by a monotonicity argument. By \cite[Theorem
 23]{DRW-mcss} or \cite[Prop. 4.1]{R08-positivity} we have
 $\Gamma_{\mu}\ngeq\id$.

 Assuming that the spectral radius of $G$ is greater or equal to one
 there exists $\tilde s\in\Rnp$, $\tilde s\ne0$, such that $G\tilde s\geq \tilde
 s$. Defining $\hat s=D^{-1}(\tilde s)$ we have $\Gamma_{\mu}(\hat s)
 = D^{-1}(GD(\hat s)) = D^{-1}(G\tilde s) \geq D^{-1}(\tilde s) =
 \hat s$. Hence $\Gamma_{\mu}\ngeq\id$ if and only if the spectral
 radius of $G$ is less than one.

 Homogeneity of $\Gamma_{\mu}$ is obtained as in~\eqref{eq:14}.
\end{proof}

\subsection{Application to linear interconnected systems}
\label{linear-application}

Consider the interconnection \eqref{linear-systems} of linear
systems from Section \ref{section:linear}.

\begin{proposition}
Let each $\Sigma_i$ in \eqref{linear-systems} be ISS with a
quadratic ISS Lyapunov function $V_i$, so that the corresponding
operator $\Gamma_\mu$ can be taken to be as in~\eqref{eq:9}.
If the spectral radius $r(G)$  of the associated matrix
\begin{equation}
  \label{eq:12}
  G=
  \begin{pmatrix}
    \frac{2b^3_i\|\Delta_{ij}\|}{c_i(1-\epsilon){a_j}}
  \end{pmatrix}_{ij}
\end{equation}
is less than 1, then the interconnection
$$
\Sigma: \quad \dot x = (A+\Delta)x+Bu
$$
is ISS and its (nonsmooth) ISS Lyapunov function can be taken as
$$
V(x)=\max_i \frac{1}{s_i} x_i^T P_i x_i
$$
for some positive vector $s\in\R_+^n$.
\end{proposition}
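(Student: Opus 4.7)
The plan is to reduce $\Gamma_\mu$ in \eqref{eq:9} to the framework of Lemma~\ref{lemma:essentially-linear-is-homogeneous} and then apply Theorem~\ref{LFtheo} with an explicitly constructed $\Omega$-path and a custom gain $\phi$ for the external input. The key observation is that with $\alpha(r)=\sqrt{r}$, which satisfies $\alpha(ab)=\alpha(a)\alpha(b)$, and $D=\diag(\alpha)$, the operator from \eqref{eq:9} (dropping the external input) takes precisely the form $\Gamma_\mu(s)=D^{-1}(GD(s))$: indeed $D(s)_i=\sqrt{s_i}$ and $D^{-1}(s)_i=s_i^2$, so $D^{-1}(GD(s))_i = \bigl(\sum_j G_{ij}\sqrt{s_j}\bigr)^2$, which matches the $i$-th component of $\Gamma_\mu(s)$ when $G$ is given by \eqref{eq:12}.

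By Lemma~\ref{lemma:essentially-linear-is-homogeneous}, $\Gamma_\mu$ is then homogeneous and the small gain condition $\Gamma_\mu\ngeq\id$ is equivalent to $r(G)<1$. Moreover, the proof of that lemma supplies an explicit $\Omega$-path: choose a strictly positive vector $\tilde s$ with $G\tilde s<\tilde s$ componentwise (which exists under $r(G)<1$ by the small perturbation argument used there); set $\hat s:=D^{-1}(\tilde s)$, i.e., $\hat s_i=\tilde s_i^{2}$. Then $\sigma(r):=r\hat s$ is a linear, hence trivially admissible, $\Omega$-path with respect to $\Gamma_\mu$.

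To invoke Theorem~\ref{LFtheo} the remaining task is to produce $\phi\in\Kinf$ verifying \eqref{generalcond}. Writing $\kappa_i:=\frac{2b_i^{3}\|B_i\|}{c_i(1-\epsilon)}$, the $i$-th component of $\ol\Gamma_\mu(\sigma(r),\phi(r))$ equals
\begin{equation*}
  \bigl(\sqrt{r}\,(G\tilde s)_i+\kappa_i\phi(r)\bigr)^{2},
\end{equation*}
which is strictly less than $\sigma_i(r)=r\tilde s_i^{2}$ exactly when $\kappa_i\phi(r)<\sqrt{r}\bigl(\tilde s_i-(G\tilde s)_i\bigr)$. Since every $\delta_i:=\tilde s_i-(G\tilde s)_i$ is strictly positive, the choice $\phi(r):=c\sqrt{r}$ with $0<c<\min_i \delta_i/\kappa_i$ works simultaneously for all $i$.

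Theorem~\ref{LFtheo} then delivers the ISS Lyapunov function $V(x)=\max_i\sigma_i^{-1}(V_i(x_i))=\max_i\frac{1}{\hat s_i}x_i^{T}P_ix_i$, so the claim follows with $s:=\hat s\in\R_+^n$. The only nontrivial step is spotting the conjugation-by-square-root structure that identifies $\Gamma_\mu$ as an essentially linear homogeneous operator; once this is done the construction of $\sigma$ and $\phi$, and the identification of the weighted maximum as the resulting Lyapunov function, are direct.
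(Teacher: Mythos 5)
Your proposal is correct, and its core is the same as the paper's: identify $\Gamma_\mu$ from \eqref{eq:9} as $D^{-1}(GD(\cdot))$ with $D=\diag(\sqrt{\cdot})$, use Lemma~\ref{lemma:essentially-linear-is-homogeneous} to tie the small gain condition to $r(G)<1$, take a ray $\sigma(r)=r\hat s$ as the $\Omega$-path, choose $\phi$ proportional to $\sqrt{r}$, and conclude via Theorem~\ref{LFtheo}. The differences are in how the positive vector is produced and how the external gain is handled, and they are mildly in your favor. The paper obtains $s>0$ with $\Gamma_\mu(s)<s$ by invoking Proposition~\ref{prop:homogen}, which needs irreducibility of $G$, and therefore has to dispatch the reducible case separately through the block-triangular machinery of Section~\ref{sec:reducible}; you instead pull the strictly positive vector $\tilde s$ with $G\tilde s<\tilde s$ directly out of the perturbation argument inside the proof of Lemma~\ref{lemma:essentially-linear-is-homogeneous}, which only needs $r(G)<1$, so your construction covers reducible $G$ in one stroke and no case distinction is required. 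For the external input, the paper picks $r^*$ with $\overline\Gamma_\mu(s,r^*)<s$ and sets $\phi(r)=\sqrt{r}\,r^*$, exploiting the joint homogeneity $\overline\Gamma_\mu(rs,\sqrt{r}\,r^*)=r\,\overline\Gamma_\mu(s,r^*)$; your explicit computation $\overline\Gamma_{\mu,i}(r\hat s,\phi(r))=\bigl(\sqrt{r}\,(G\tilde s)_i+\kappa_i\phi(r)\bigr)^2$ and the choice $\phi(r)=c\sqrt{r}$ with $0<c<\min_i\delta_i/\kappa_i$ is the same idea made quantitative. Only one cosmetic caveat: if some $B_i=0$ then $\kappa_i=0$ and the ratio $\delta_i/\kappa_i$ is undefined; the corresponding constraint is vacuous, so the minimum should simply be taken over the indices with $\kappa_i>0$.
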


\begin{proof}
We have $\Gamma_{\mu} = D^{-1}(GD(\cdot))$,
where $D=\diag(\alpha)$ for $\alpha(r)=\sqrt{r}$. Now
$\alpha$ satisfies the assumptions of
Lemma~\ref{lemma:essentially-linear-is-homogeneous}, which yields
that $\Gamma_{\mu}$ satisfies the small gain condition $\Gamma_\mu \ngeq\id$
if and only if $r(G) < 1$. If $G$ or equivalently $\Gamma_\mu$ is irreducible
then there exists by Proposition~\ref{prop:homogen} an $s>0$ such that
$\Gamma_\mu(s) < s$. By \eqref{eq:9} we see, that there exists an $r^*\in (0,\infty)$ 
such that $\overline{\Gamma}_\mu(s,r^*) < s$. Then defining $\sigma(r) = r s, \phi(r) = \sqrt{r} r^*$ we obtain
for all $r>0$ that
\begin{equation*}
 \overline{\Gamma}_\mu(\sigma(r),\phi(r)) = r \overline{\Gamma}_\mu(s,r^*) < r s = \sigma(r)\,.
\end{equation*}
Thus the conditions of Theorem~\ref{LFtheo} are satisfied and an ISS Lyapunov function can be taken as
$V(x)=\max_i \frac{1}{ s_i} x_i^T P_i x_i$.

If $G$ is reducible the previous construction has to be performed for every irreducible blocks and then the scaling
techniques of Section~\ref{sec:reducible} need to be applied.
\end{proof}

\subsection{Application to neural networks}
\label{neural-networks-application}

Consider the neural network \eqref{eq:neural network}
discussed in Section~\ref{section:neural networks}. This is a
system of coupled nonlinear equations, and we have seen that each
subsystem is ISS. Note that so far we have not imposed any
restrictions on the coefficients $t_{ij}$. Moreover the
assumptions imposed on $a_i,\,b_i,\,s_i$ are essentially milder
then in \cite{WaZ02}. However to obtain the ISS property of the
network we need to require more. The small gain condition can be
used for this purpose. It will impose restrictions on the coupling
terms $t_{ij}s(x_j)$. From Corollary \ref{sumcor} it follows:

\begin{theorem}
 Consider the Cohen-Grossberg neural network~\eqref{eq:neural
   network}. Let $\Gamma_\mu$ be given by $\gamma_{ij}$ and $\mu_i$,
 $i,j=1,\dots,n$, as calculated for the interconnection in
 Section~\ref{section:neural networks}.  Assume that $\Gamma_\mu$ satisfies the
 strong small gain condition $D\circ\Gamma_\mu\not\ge\id$ for
 $s\in\R^n_+\setminus{0}$. Then this network is ISS from
 $(J_1,\dots,J_n)^T$ to $x$.
\end{theorem}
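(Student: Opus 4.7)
The plan is to recognize that the ISS condition derived in Section~\ref{section:neural networks} already fits the hypothesis of Corollary~\ref{cor:addition_of_gains} after a trivial relabeling, and to apply that corollary directly, so that no irreducibility assumption on the interconnection graph is required.

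First I would recall the computation from Section~\ref{section:neural networks}: with $V_i(x_i)=|x_i|$, the subsystem $\mathrm{NN}_i$ admits the ISS Lyapunov estimate
\begin{equation*}
V_i(x_i) \geq \mu_i\bigl(\gamma_{i1}(V_1(x_1)),\ldots,\gamma_{in}(V_n(x_n)),\gamma_{iu}(\|u\|)\bigr) \implies \dot V_i < -\varepsilon |b_i(x_i)|,
\end{equation*}
where $u=(J_1,\ldots,J_n)^T$, and the MAF takes the additively separated form
\begin{equation*}
\mu_i(s,r) = \tilde b_i^{-1}\!\circ(\id+\rho)(s_1+\cdots+s_n) + \tilde b_i^{-1}\!\circ(\id+\rho^{-1})(r).
\end{equation*}
Writing $A_i := \tilde b_i^{-1}\!\circ(\id+\rho)$ and $B_i := \tilde b_i^{-1}\!\circ(\id+\rho^{-1})$, both of which are $\Kinf$, the overall gain operator decomposes as
\begin{equation*}
\ol\Gamma_{\mu,i}(V_1(x_1),\ldots,V_n(x_n),\|u\|)
= A_i\!\Big(\sum_{j=1}^n \gamma_{ij}(V_j(x_j))\Big) + B_i(\gamma_{iu}(\|u\|)).
\end{equation*}

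Second, I would define $\widetilde\gamma_{iu} := B_i\circ\gamma_{iu}\in\Kinf$ for each $i$ and absorb these into the external gain, so that the interconnection is described by internal gains $\gamma_{ij}$, external gains $\widetilde\gamma_{iu}$, and MAFs $\widetilde\mu_i(s):=A_i(s_1+\cdots+s_n)$ acting only on the internal inputs. With this relabeling the ISS condition is exactly of the additive form \eqref{eq:additive-external-gain} appearing in Corollary~\ref{cor:addition_of_gains}. Crucially, the \emph{internal} operator built from these data is precisely the $\Gamma_\mu$ in the theorem statement, since $\mu_i(s,0)=A_i(\sum_j s_j)=\widetilde\mu_i(s)$.

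Third, I would apply Corollary~\ref{cor:addition_of_gains} to this reformulated system: by hypothesis there is a diagonal operator $D=\diag(\id+\alpha)$ with $D\circ\Gamma_\mu\not\geq\id$, which is exactly the strong small gain assumption of the corollary. The conclusion is that the interconnected network is ISS from $(J_1,\ldots,J_n)^T$ to $x$, with an ISS Lyapunov function $V(x)=\max_i \sigma_i^{-1}(V_i(x_i))$ built from an $\Omega$-path for $D\circ\Gamma_\mu$ on each irreducible diagonal block and glued together via Proposition~\ref{prop:reducible}.

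The only point requiring a word of care is the transition from the original MAF $\mu_i$ to the new pair $(\widetilde\mu_i,\widetilde\gamma_{iu})$: one must observe that $\widetilde\mu_i\in\MAF_n$ (properties (M1)--(M3) are inherited from $A_i\in\Kinf$ and the sum), that $\widetilde\gamma_{iu}\in\Kinf$, and that the resulting $\Gamma_\mu$ agrees with the operator on which the hypothesis is stated, so that no circularity or change of small gain condition arises. There is no genuine technical obstacle beyond this bookkeeping, because the heavy lifting — existence of the $\Omega$-path, Lyapunov function construction, and the reducible case — has already been carried out in Theorems~\ref{path} and~\ref{LFtheo} and in Section~\ref{sec:reducible}.
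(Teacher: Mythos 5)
Your core observation — that the neural-network ISS estimate is additively separated in the external gain, so one can absorb $B_i:=\tilde b_i^{-1}\circ(\id+\rho^{-1})$ into $\widetilde\gamma_{iu}=B_i\circ\gamma_{iu}$ and read off $\Gamma_{\mu,i}(s)=A_i\bigl(\sum_j\gamma_{ij}(s_j)\bigr)$ with $A_i=\tilde b_i^{-1}\circ(\id+\rho)$ — is exactly the observation the paper uses; its proof is literally the single step ``From Corollary~\ref{sumcor} it follows,'' i.e.\ an application of the additive-external-gain corollary. However, your invocation of Corollary~\ref{cor:addition_of_gains} is not justified: the relabeled condition is \emph{not} of the form \eqref{eq:additive-external-gain}, because the internal aggregation there must be the plain sum $\sum_j\gamma_{ij}(V_j)$, whereas here the sum sits inside the $\Kinf$-wrapper $A_i$, and you cannot push $A_i$ through the sum without changing the gains and hence the operator $\Gamma_\mu$ on which the hypothesis is stated (which you yourself rightly insist must be preserved). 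The condition you actually obtain matches the hypothesis of Corollary~\ref{sumcor}, namely $\ol\Gamma_\mu(s,r)=\Gamma_\mu(s)+\widetilde\gamma_u(r)$ with a general MAF-based $\Gamma_\mu$ — and that corollary assumes irreducibility.

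Consequently the advertised benefit of your route, dispensing with irreducibility, is not delivered by the results as stated. In the reducible case the blockwise repair you sketch (apply the corollary on each irreducible diagonal block and glue with Proposition~\ref{prop:reducible}) also does not go through verbatim: for a diagonal block the gains from later blocks enter \emph{inside} $A_i$, aggregated with the block-internal gains, so the block's ISS condition with respect to its external inputs (later-block states and $u$) is again not additively separated, and neither Corollary~\ref{sumcor} nor \eqref{eq:additive-external-gain} applies to the block. One would instead have to return to Theorem~\ref{LFtheo} and construct $\phi$ satisfying \eqref{generalcond} for each block directly (using continuity of $A_i$, the slack provided by $D$, and unboundedness of $\Gamma_{\mu,i}(\sigma)$ on irreducible blocks), which is a genuine extra argument, not bookkeeping. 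To be fair, the paper's own one-line proof shares this looseness, since Corollary~\ref{sumcor} requires irreducibility while the theorem does not state it; but your write-up should either restrict to strongly connected networks and cite Corollary~\ref{sumcor}, or supply the blockwise $\phi$-construction explicitly.
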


\begin{remark}
 In \cite{WaZ02} the authors have proved that there exists a unique
 equilibrium point for the network and given constant external
 inputs. They have also proved the exponential stability of this
 equilibrium. We have considered arbitrary external inputs to the
 network and proved the ISS property for the interconnection.
\end{remark}

\section{Path construction}
\label{sec:pathconstruction}

This section explains the relation between the small gain
condition for $\Gamma_\mu$ and its mapping properties. Then we
construct a strictly increasing $\Omega$-path and prove
Theorem~\ref{path} and some extensions.  Let us first consider
some simple particular cases to explain the main ideas, as
depicted in Figure~\ref{fig:path-elements}.  In the following
subsections we then proceed to the main path construction results.

\begin{figure}[htbp]
\centering
\hfill
\includegraphics[width=0.35\textwidth]{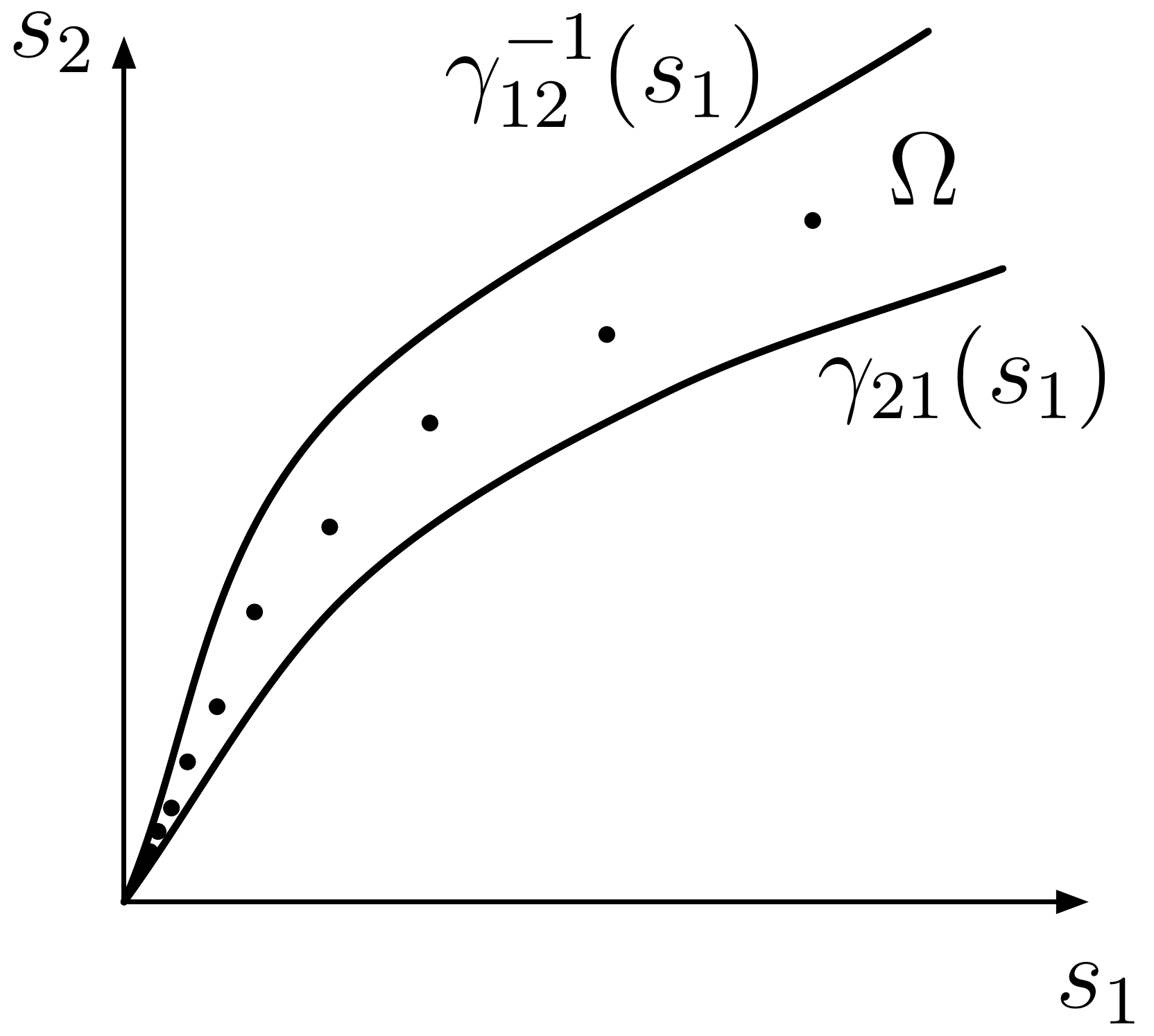}
\hfill
\includegraphics[width=0.35\textwidth]{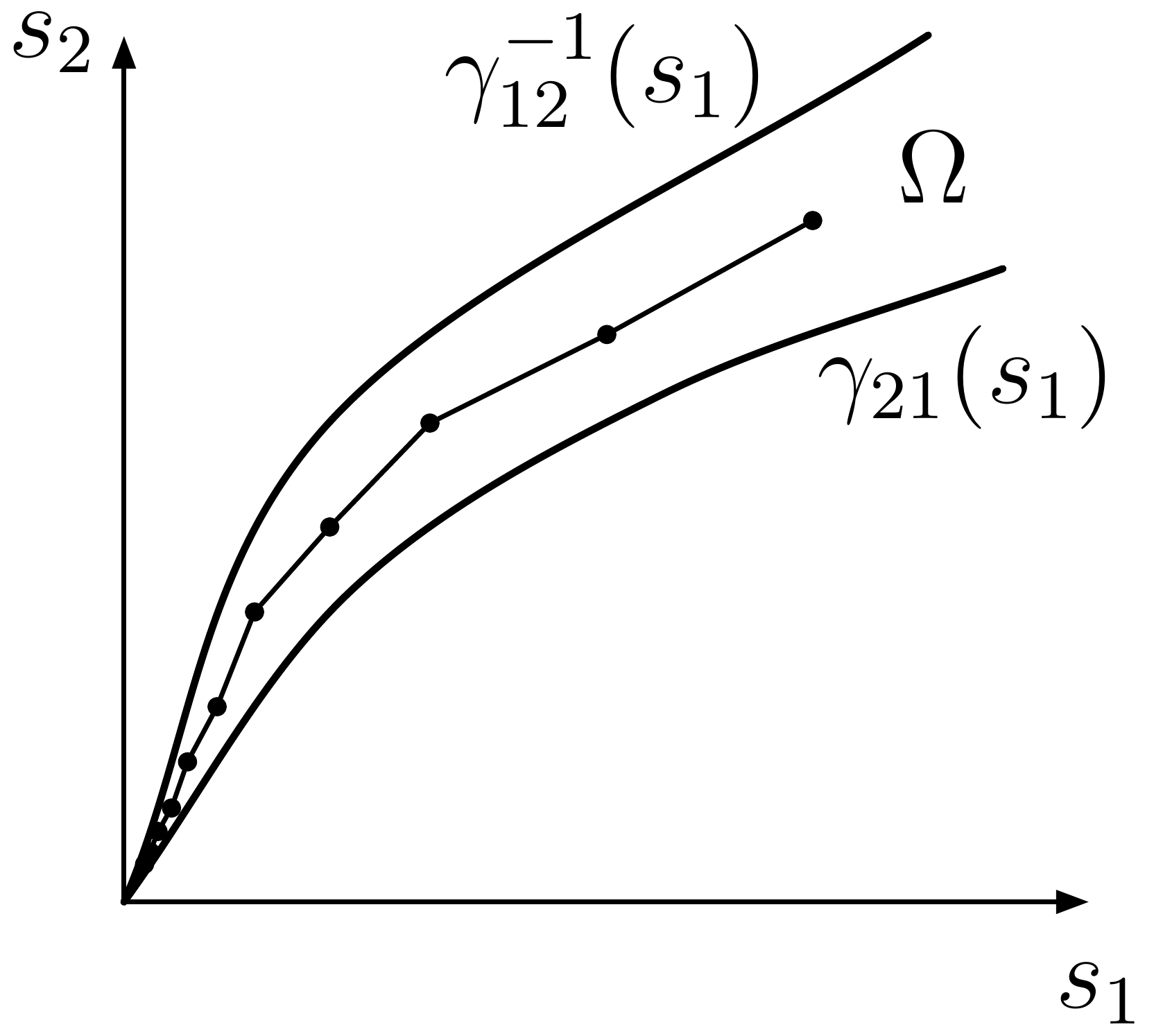}
\hfill\mbox{}
\caption{A sequence of points $\{\Gamma_{\mu}^{k}(s)\}_{k\geq0}$ for
  some $s\in \Omega(\Gamma_{\mu})$, where
  $\Gamma_{\mu}:\R^{2}_{+}\to\R^{2}_{+}$ is given by
  $\Gamma_{\mu}(s)=(\gamma_{12}(s_{2}),\gamma_{21}(s_{1}))^{T}$ and
  satisfies
  $\Gamma_{\mu}\ngeq\id$, or, equivalently, $\gamma_{21}\circ
  \gamma_{12}<\id$, and the corresponding linear interpolation, cf.\
  Lemmas~\ref{lem:easyconv}, \ref{lem:pathconvex}, and \ref{lemma:sigma-s}.}
\label{fig:path-elements}
\end{figure}

A map $T:\Rnp\to\Rnp$ is \emph{monotone} if $x\leq y$ implies
$T(x)\leq T(y)$. Clearly any matrix $\Gamma\in\Kinfnn$ together
with an aggregation $\mu \in \MAF_n^n$ induces a monotone map $\Gamma_{\mu}$.

\begin{lemma}
\label{lem:easyconv} Let $\Gamma\in \Knn$ and $\mu \in \MAF_n^n$,
such that $\Gamma_\mu$ satisfies \eqref{eq:small-gain-condition}.
If $s\in \Omega(\Gamma_\mu)$, then $\lim_{k\to \infty}
\Gamma_\mu^k(s) = 0$.
\end{lemma}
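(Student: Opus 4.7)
The plan is to combine monotonicity of $\Gamma_\mu$ with the small gain condition, via a standard monotone-convergence argument on the orbit of $s$.

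First I would observe that $\Gamma_\mu$ is monotone on $\Rnp$: this follows directly from (M2') and the monotonicity of each gain function $\gamma_{ij} \in \K$. Given $s \in \Omega(\Gamma_\mu)$, that is $\Gamma_\mu(s) < s$, monotonicity then yields by induction that the orbit
\[
 s \;>\; \Gamma_\mu(s) \;\geq\; \Gamma_\mu^2(s) \;\geq\; \Gamma_\mu^3(s) \;\geq\; \cdots \;\geq\; 0
\]
is componentwise nonincreasing and bounded below by $0$. Hence $s^k := \Gamma_\mu^k(s)$ converges componentwise to some limit $s^* \in \Rnp$ with $0 \leq s^* \leq s$.

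Next, since $\Gamma_\mu$ is continuous (being built from continuous gains and continuous MAFs), passing to the limit in $s^{k+1} = \Gamma_\mu(s^k)$ gives $\Gamma_\mu(s^*) = s^*$. In particular $\Gamma_\mu(s^*) \geq s^*$.

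Finally, I invoke the small gain condition $\Gamma_\mu \not\geq \id$, which by definition means that $\Gamma_\mu(t) \geq t$ holds only for $t = 0$. Consequently $s^* = 0$, proving $\lim_{k\to\infty}\Gamma_\mu^k(s) = 0$.

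No step looks like a real obstacle: monotonicity of the orbit is immediate, convergence is Bolzano--Weierstrass (or, more directly, the monotone convergence theorem applied componentwise), and the identification of the limit with $0$ is exactly the content of \eqref{eq:small-gain-condition}. The only subtlety worth stating explicitly is that the hypothesis $\Gamma \in \Knn$ (rather than $\Kinfnn$) is harmless here, since we only need continuity and monotonicity of $\Gamma_\mu$, not unboundedness of the entries.
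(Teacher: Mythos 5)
Your argument is correct and follows exactly the paper's own proof: the orbit $\Gamma_\mu^k(s)$ is monotonically decreasing and bounded below, hence convergent; continuity identifies the limit as a fixed point, and the small gain condition \eqref{eq:small-gain-condition} forces that fixed point to be $0$. No differences worth noting.
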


\begin{proof}
If $s\in \Omega$, then $\Gamma_\mu(s) < s$ and by monotonicity
$\Gamma_\mu^2(s) \leq \Gamma_\mu(s)$. By induction
$\Gamma_\mu^k(s)$ is a monotonically decreasing sequence bounded
from below by $0$. Thus $\lim_{k\to \infty} \Gamma_\mu^k(s) =:
s^*$ exists and by continuity we have $\Gamma_\mu(s^*) = s^*$. By
the small gain condition it follows $s^*=0$.
\end{proof}

\begin{lemma}
\label{lem:pathconvex} Assume that $\Gamma\in \Knn$ has no zero
rows and let $\mu \in \MAF_n^n$.  If $0 < s\in
\Omega(\Gamma_\mu)$, then
\begin{enumerate}
\item\label{item:1} $0 < \Gamma_\mu(s) \in \Omega$
\item\label{item:2} for all $\lambda \in  [0,1]$ the convex
combination $s_\lambda
 := \lambda s + (1-\lambda) \Gamma_\mu(s) \in \Omega$.
\end{enumerate}
\end{lemma}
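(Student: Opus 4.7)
The plan is to prove (i) first and then use it as the $\lambda=0$ endpoint for (ii).

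For (i), first observe that $\Gamma_\mu(s)>0$ componentwise. Indeed, since $\Gamma$ has no zero rows, for each $i$ there exists $j$ with $\gamma_{ij}\in\K\setminus\{0\}$, so $s_j>0$ forces $\gamma_{ij}(s_j)>0$; thus the argument vector of $\mu_i$ is $\gneqq 0$ and property (M1) yields $\Gamma_\mu(s)_i>0$. Next I would set $t:=\Gamma_\mu(s)$ and show $\Gamma_\mu(t)<t$ componentwise. Since $t<s$ componentwise and each $\gamma_{ij}\in\K$ is strictly increasing, for every index $j\in I_i$ (the set of columns where the $i$-th row of $\Gamma$ is nonzero) we have $\gamma_{ij}(t_j)<\gamma_{ij}(s_j)$, while the remaining coordinates are zero on both sides. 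The compatibility assumption~\eqref{standard-assumption-on-compatibility-MAF-and-Gamma} then gives the \emph{strict} inequality
\[
\mu_i\bigl(\gamma_{i1}(t_1),\ldots,\gamma_{in}(t_n)\bigr)
\;<\;
\mu_i\bigl(\gamma_{i1}(s_1),\ldots,\gamma_{in}(s_n)\bigr)=t_i,
\]
i.e.\ $\Gamma_\mu(t)_i<t_i$, which is the desired $\Gamma_\mu(s)\in\Omega$.

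For (ii), I would split $\lambda\in\{0,1\}$ from $\lambda\in(0,1)$. The endpoint $\lambda=1$ is the hypothesis and $\lambda=0$ is part (i). For $\lambda\in(0,1)$ a direct computation gives
\[
s_\lambda-\Gamma_\mu(s)=\lambda\bigl(s-\Gamma_\mu(s)\bigr)>0,
\qquad
s-s_\lambda=(1-\lambda)\bigl(s-\Gamma_\mu(s)\bigr)>0,
\]
both componentwise, using that $s\in\Omega$. Hence $\Gamma_\mu(s)<s_\lambda\le s$. Monotonicity (M2$'$) of $\Gamma_\mu$ yields $\Gamma_\mu(s_\lambda)\le\Gamma_\mu(s)<s_\lambda$, which is exactly $s_\lambda\in\Omega$.

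The only place where care is required is the strict inequality in (i): naive application of monotonicity (M2$'$) only gives $\Gamma_\mu(\Gamma_\mu(s))\le\Gamma_\mu(s)$, and property (M2) as stated cannot be invoked directly because certain coordinates in the argument of $\mu_i$ may be identically zero (when $\gamma_{ij}\equiv 0$). This is precisely why the compatibility hypothesis of Remark~\ref{genass} is needed; with that hypothesis in hand the argument is straightforward. Once (i) is established, (ii) follows by the convex-combination identity together with monotonicity, and no further use of strict monotonicity is required.
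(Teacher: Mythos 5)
Your proof is correct and follows essentially the same route as the paper: positivity of $\Gamma_\mu(s)$ from (M1) together with the no-zero-rows hypothesis, strict decrease $\Gamma_\mu(\Gamma_\mu(s))<\Gamma_\mu(s)$ from strict monotonicity, and then the sandwich $\Gamma_\mu(s)<s_\lambda<s$ plus monotonicity for the convex combinations. Your explicit appeal to the compatibility condition~\eqref{standard-assumption-on-compatibility-MAF-and-Gamma} of Remark~\ref{genass} to justify the strict inequality is in fact a more careful rendering of the paper's one-line use of (M2), since that standing assumption is exactly what makes the strict step legitimate when some $\gamma_{ij}\equiv 0$.
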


\begin{proof}
 \ref{item:1}~By assumption $\Gamma_\mu(s) < s$ and so by the
 monotonicity assumption (M2) we have $\Gamma_\mu(\Gamma_\mu(s)) <
 \Gamma_\mu(s)$.  Furthermore, as $s>0$ and the matrix $\Gamma$ has
 no zeros rows, we have that $\Gamma_\mu(s)>0$ by assumption (M1).

\ref{item:2}~As $\Gamma_\mu(s) < s$ it follows for all $\lambda
\in (0,1)$ that $\Gamma_\mu(s) < s_\lambda < s$.  Hence by
monotonicity and using \ref{item:1}
\[ 0< \Gamma_\mu(\Gamma_\mu(s)) < \Gamma_\mu(s_\lambda) <
\Gamma_\mu(s) < s_\lambda \,.\] This implies $s_\lambda \in
\Omega$ as desired.
\end{proof}

\begin{lemma}
\label{lemma:sigma-s} Assume that $\Gamma\in \Knn$ has no zero
rows and let $\mu \in \MAF_n^n$ be such that $\Gamma_\mu$
satisfies the small gain condition
\eqref{eq:small-gain-condition}. Let $s\in\Omega(\Gamma_{\mu})$.
Then there exists a path in $\Omega\cup\{0\}$ connecting the
origin and $s$.
\end{lemma}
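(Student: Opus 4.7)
The plan is to build the path by concatenating line segments between consecutive iterates $\Gamma_\mu^k(s)$, using Lemma~\ref{lem:easyconv} to get convergence to $0$ and Lemma~\ref{lem:pathconvex} to keep the segments inside $\Omega$.

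First, I set $s_0 := s$ and $s_{k+1} := \Gamma_\mu(s_k)$ for $k \geq 0$. By hypothesis $s_0 \in \Omega$ and $s_0 > 0$ (nonzero and nonnegative; in fact strictly positive follows from Lemma~\ref{lem:pathconvex}\ref{item:1} applied to $s_{-1}$ if needed, but we can just take $s > 0$ since $\Omega \subset \Rnp$ and we may replace $s$ by $\Gamma_\mu(s)$ which is strictly positive because $\Gamma$ has no zero rows). Inductively, Lemma~\ref{lem:pathconvex}\ref{item:1} yields $s_k > 0$ and $s_k \in \Omega$ for every $k \geq 0$, and Lemma~\ref{lem:easyconv} gives $s_k \to 0$ monotonically as $k \to \infty$.

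Next, for each $k \geq 0$, apply Lemma~\ref{lem:pathconvex}\ref{item:2} to $s_k$: the whole segment
\[
\{\lambda s_k + (1-\lambda) s_{k+1} : \lambda \in [0,1]\}
\]
is contained in $\Omega$. I concatenate these segments by parametrizing on $[0,1]$: set $t_k := 1 - 2^{-k}$ for $k \geq 0$, so $t_0 = 0$ and $t_k \nearrow 1$, and define $p : [0,1] \to \Rnp$ piecewise linearly by
\[
p(t) := \frac{t_{k+1} - t}{t_{k+1} - t_k}\, s_k + \frac{t - t_k}{t_{k+1} - t_k}\, s_{k+1}, \qquad t \in [t_k, t_{k+1}],
\]
with $p(1) := 0$. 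Then $p(0) = s_0 = s$ and $p(t_k) = s_k$ for all $k$, and on each subinterval $p$ is a convex combination of $s_k$ and $s_{k+1}$, so $p(t) \in \Omega$ for $t \in [0,1)$ by the previous paragraph.

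It remains to verify continuity of $p$ at $t = 1$. On $[t_k, t_{k+1}]$ we have $0 \leq p(t) \leq s_k$ componentwise (since $0 < s_{k+1} \leq s_k$ and convex combinations preserve such bounds), and $s_k \to 0$; hence $p(t) \to 0 = p(1)$ as $t \to 1$. Thus $p$ is continuous on $[0,1]$, takes values in $\Omega$ on $[0,1)$ and value $0$ at $t = 1$, which gives a path in $\Omega \cup \{0\}$ from $s$ to $0$ as required.

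The only mildly delicate point is ensuring strict positivity of the iterates $s_k$ so that Lemma~\ref{lem:pathconvex} applies at every step; this is guaranteed by the no-zero-rows assumption on $\Gamma$ together with (M1), exactly as in Lemma~\ref{lem:pathconvex}\ref{item:1}. Everything else is a routine concatenation and a convergence check.
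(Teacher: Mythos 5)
Your proposal is correct and follows essentially the same route as the paper's proof: line segments between consecutive iterates $\Gamma_\mu^k(s)$ lie in $\Omega$ by Lemma~\ref{lem:pathconvex}, and Lemma~\ref{lem:easyconv} gives convergence to the origin, so the concatenation is the desired path. Your explicit parametrization and continuity check at the endpoint merely spell out details the paper leaves implicit (and note that $s\in\Omega$ already forces $s>0$, since $0\le\Gamma_\mu(s)<s$ componentwise, so the positivity worry resolves itself immediately).
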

\begin{proof}
By Lemma~\ref{lem:pathconvex}, the line segment $\{ \lambda
\Gamma_\mu(s) + (1-\lambda)s \} \subset \Omega$. By induction all
the line segments $\{ \lambda \Gamma_\mu^{k+1}(s) +
(1-\lambda)\Gamma_\mu^k(s) \} \subset \Omega$ for $k\geq 1$.  Using
Lemma~\ref{lem:easyconv} we see that $\Gamma_\mu^k(s)\to0$ as
$k\to\infty$.  This constructs an $\Omega$-path with respect to
$\Gamma_\mu$ from $0$ to $s$.
\end{proof}

The following result applies to $\Gamma$ whose entries are
bounded, i.e., in $(\K\setminus\Kinf)\cup\{0\}$.

\begin{proposition}
\label{prop:sigma-for-bounded-Gamma} Assume that $\Gamma\in \Knn$
has no zero rows and let $\mu \in \MAF_n^n$ be such that
$\Gamma_\mu$ satisfies the small gain condition
\eqref{eq:small-gain-condition}. Assume furthermore that
$\Gamma_\mu$ is bounded, then there exists an $\Omega$-path with
respect to $\Gamma_\mu$.
\end{proposition}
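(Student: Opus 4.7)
The plan is to exploit boundedness of $\Gamma_\mu$ to produce a point $s^*\in\Omega$ with arbitrarily large components, build a piecewise-linear curve from $0$ up to $s^*$ inside $\Omega\cup\{0\}$ using the iterates $\Gamma_\mu^k(s^*)$, and then extend radially to infinity where boundedness automatically keeps us in $\Omega$.

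First I would set $M_{ij}:=\sup_{t\ge 0}\gamma_{ij}(t)<\infty$ (using that $\gamma_{ij}\in (\K\setminus\Kinf)\cup\{0\}$), and $K_i:=\mu_i(M_{i1},\ldots,M_{in})$. Then by (M2') one has $\Gamma_{\mu,i}(s)\le K_i$ for every $s\in\Rnp$, so any $s^*\in\Rnp$ with $s^*_i>K_i$ satisfies $s^*\in\Omega$. This provides an explicit anchor point in $\Omega$ without having to invoke any topological result. By Lemma~\ref{lem:easyconv} the sequence $\Gamma_\mu^k(s^*)$ decreases monotonically to $0$, and by the iterated application of Lemma~\ref{lem:pathconvex} each segment $[\Gamma_\mu^{k+1}(s^*),\Gamma_\mu^k(s^*)]$ is contained in $\Omega$.

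Next I would concatenate these segments into a single piecewise-linear curve $P\subset\Omega\cup\{0\}$ joining $0$ to $s^*$, and append the linear ray $\{s^*+t\mathbf{1}:t\ge 0\}$. The tail is automatically in $\Omega$: for $s\ge s^*$ we have $\Gamma_{\mu,i}(s)\le K_i<s^*_i\le s_i$ by the choice of $s^*$. Because on each segment both endpoints differ componentwise strictly (by $\Gamma_\mu^{k+1}(s^*)<\Gamma_\mu^k(s^*)$, valid because $\Gamma$ has no zero rows, so (M1) gives strict decrease in every coordinate), the curve is componentwise strictly increasing from $0$ to infinity. I would then parameterize by the $1$-norm, $r=|\sigma(r)|$, producing $\sigma:(0,\infty)\to\Omega$ with each $\sigma_i\in\Kinf$.

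Finally I would verify the two regularity conditions (i) and (ii) in Definition~\ref{def:omega-path}. For any compact $K\subset(0,\infty)$, only finitely many of the linear pieces meet $K$ (since the pieces accumulate only at $r=0$, which is excluded), so on $K$ each $\sigma_i$ is bi-Lipschitz with slopes bounded from above and away from zero; consequently $\sigma_i^{-1}$ is locally Lipschitz on $(0,\infty)$ and its a.e.\ derivative satisfies the two-sided bound \eqref{sigma-bounds}.

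The only real subtlety is the behaviour near the origin, where infinitely many linear pieces accumulate: one must make sure that no compact subset of $(0,\infty)$ touches $0$, so that the finiteness of pieces over $K$ gives uniform slope bounds; this is automatic once $K$ is bounded away from $0$. Beyond this, the argument is a routine piecewise-linear construction driven entirely by the boundedness of $\Gamma_\mu$.
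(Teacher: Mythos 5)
Your proposal is correct and is essentially the paper's own argument: the paper picks $s>\sup \Gamma_\mu(\Rnp)$, observes $\eta s\in\Omega$ for all $\eta\ge 1$ (your ray to infinity), and joins $s$ to the origin by linearly interpolating the iterates $\Gamma_\mu^k(s)$ via Lemmas~\ref{lem:easyconv} and~\ref{lem:pathconvex} (this is Lemma~\ref{lemma:sigma-s}), exactly as you do with your explicit bound $K_i$ in place of $\sup\Gamma_\mu(\Rnp)$. One cosmetic remark: the strict componentwise decrease $\Gamma_\mu^{k+1}(s^*)<\Gamma_\mu^k(s^*)$ comes from each iterate lying in $\Omega$ (Lemma~\ref{lem:pathconvex}(i), where ``no zero rows'' and (M1) keep the iterates positive), not from (M1) alone, and your verification of conditions (i)--(ii) of Definition~\ref{def:omega-path} merely makes explicit what the paper leaves implicit.
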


\begin{proof}
By assumption the set $\Gamma_\mu(\Rnp)$ is bounded, so pick $s >
\sup \Gamma_\mu(\Rnp)$. Then clearly, $\Gamma_\mu(s) < s$ and so
$s \in \Omega$. By the same argument $\eta s \in \Omega$ for all
$\eta \in [1,\infty)$. Thus a path in $\Omega$ through the point
$s$ exists, if we find a path from $s$ to $0$ contained in
$\Omega$. The remainder of the result is given by
Lemma~\ref{lemma:sigma-s}.
\end{proof}

The difficulty now arises if $\Gamma_\mu$ happens to be unbounded,
i.e., $\Gamma$ contains entries of class $\Kinf$. In the unbounded
case the simple construction above is not possible. In the
following we will first consider the case that all nonzero entries
of $\Gamma$ are of class $\Kinf$. Beforehand we introduce a few
technical lemmas.

\subsection{Technical lemmas}
\label{sec:technical-lemmas} Throughout this subsection
$T:\Rnp\to\Rnp$ denotes a continuous, monotone map, i.e., $T$
satisfies $T(v)\leq T(w)$ whenever $v\leq w$. We start with a few
observations.

\begin{lemma}
\label{lemma:monotone:id+rho-inverse-is-id-rhotilde} Let
$\rho\in\Kinf$. Then there exists a $\tilde\rho\in\Kinf$ such that
$(\id+\rho)^{-1} = \id -\tilde\rho$.
\end{lemma}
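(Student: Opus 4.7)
The plan is to simply set $\tilde\rho(r) := r - (\id+\rho)^{-1}(r)$ and check that $\tilde\rho\in\Kinf$. Since $\rho\in\Kinf$, the map $\id+\rho$ is continuous, strictly increasing, unbounded and vanishes at $0$, so it belongs to $\Kinf$ and possesses an inverse $(\id+\rho)^{-1}\in\Kinf$. In particular $\tilde\rho$ is continuous and $\tilde\rho(0)=0$.

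For positivity on $(0,\infty)$, for any $s>0$ we have $(\id+\rho)(s)=s+\rho(s)>s$; writing $r=(\id+\rho)(s)$ then gives $(\id+\rho)^{-1}(r)=s<r$ and hence $\tilde\rho(r)>0$.

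The one step requiring a little thought is strict monotonicity, which I view as the main (though mild) obstacle, since it is not automatic from the pointwise bound $(\id+\rho)^{-1}(r)<r$. I would handle it by a change of variables: given $0\le r_1<r_2$, set $s_i:=(\id+\rho)^{-1}(r_i)$, so that $r_i=s_i+\rho(s_i)$ and $s_1<s_2$. Then
\[
r_2-r_1 \;=\; (s_2-s_1)+\bigl(\rho(s_2)-\rho(s_1)\bigr) \;>\; s_2-s_1,
\]
because $\rho$ is strictly increasing, and therefore $\tilde\rho(r_2)-\tilde\rho(r_1)=(r_2-r_1)-(s_2-s_1)>0$.

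Finally, for unboundedness, as $r\to\infty$ the quantity $s:=(\id+\rho)^{-1}(r)$ also tends to $\infty$ (because $(\id+\rho)^{-1}\in\Kinf$), and so $\tilde\rho(r)=r-s=\rho(s)\to\infty$. This exhausts the defining properties of $\Kinf$, so $\tilde\rho\in\Kinf$ and by construction $(\id+\rho)^{-1}=\id-\tilde\rho$.
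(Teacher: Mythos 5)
Your proof is correct and is essentially the paper's argument: your $\tilde\rho = \id - (\id+\rho)^{-1}$ is exactly the function $\rho\circ(\id+\rho)^{-1}$ that the paper writes down, as your own observation $\tilde\rho(r)=\rho(s)$ with $s=(\id+\rho)^{-1}(r)$ shows. The only difference is where the (small) work is placed: the paper gets $\tilde\rho\in\Kinf$ for free as a composition of two $\Kinf$ functions and instead verifies the identity $(\id-\tilde\rho)\circ(\id+\rho)=\id$ algebraically, whereas you make the identity hold by definition and then check the class-$\Kinf$ properties, in particular strict monotonicity, by the change of variables $r_i=s_i+\rho(s_i)$.
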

\begin{proof}
Just define $\tilde\rho=\rho\circ(\id+\rho)^{-1}$. Then
$(\id-\tilde\rho)\circ(\id+\rho)=(\id+\rho) -
\tilde\rho\circ(\id+\rho) =
\id+\rho-\rho\circ(\id+\rho)^{-1}\circ(\id+\rho)=\id+\rho-\rho=\id$,
which proves the lemma.
\end{proof}

\begin{lemma}
\label{lemma:D-factorization}
\begin{enumerate}
\item Let $D=\diag(\rho)$ for some $\rho\in\Kinf$ such that
 $\rho>\id$. Then for any $k\geq0$ there exist
 $\rho^{(k)}_1,\rho^{(k)}_2\in\Kinf$ satisfying $\rho^{(k)}_i>\id$,
 such that for $D^{(k)}_i=\diag(\rho^{(k)}_i)$, $i=1,2$,
 \[
 D=D^{(k)}_1 \circ D^{(k)}_2.
 \]
 Moreover, $D_2^{(k)}$, $k\geq0$, can be chosen such that for all
 $0< s\in\Rnp$ we have $$D_2^{(k)}(s)< D_2^{(k+1)}(s).$$
\item Let $D=\diag(\id+\alpha)$ for some $\alpha\in\Kinf$. Then
 there exist $\alpha_1,\alpha_2\in\Kinf$, such that for
 $D_i=\diag(\id+\alpha_i)$, $i=1,2$,
 \[
 D=D_1 \circ D_2.
 \]
\end{enumerate}
\end{lemma}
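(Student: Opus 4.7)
The plan is to produce explicit factorizations by interpolating linearly between $\id$ and $\rho$, and then to read off the additive form in part (ii) from this interpolation.

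For part (i), I would pick any strictly increasing sequence $(c_k)\subset(0,1)$, e.g.\ $c_k=1-2^{-k-1}$, and set
\[
\rho_2^{(k)}(s)\;:=\;(1-c_k)\,s \;+\; c_k\,\rho(s),\qquad s\ge 0.
\]
Since $s\mapsto s$ and $\rho$ are both in $\Kinf$, a convex combination with positive weights is continuous, strictly increasing, vanishes at $0$, and is unbounded, so $\rho_2^{(k)}\in\Kinf$. Using $\rho>\id$ on $(0,\infty)$, a one-line calculation gives $\id<\rho_2^{(k)}<\rho$ and $\rho_2^{(k+1)}(s)-\rho_2^{(k)}(s)=(c_{k+1}-c_k)(\rho(s)-s)>0$ for $s>0$. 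I would then define $\rho_1^{(k)}:=\rho\circ(\rho_2^{(k)})^{-1}$; as a composition of $\Kinf$ functions it lies in $\Kinf$, and from $\rho>\rho_2^{(k)}$ I obtain $\rho_1^{(k)}(t)=\rho((\rho_2^{(k)})^{-1}(t))>\rho_2^{(k)}((\rho_2^{(k)})^{-1}(t))=t$, i.e.\ $\rho_1^{(k)}>\id$. Setting $D_i^{(k)}=\diag(\rho_i^{(k)})$ then yields $D=D_1^{(k)}\circ D_2^{(k)}$ componentwise, and the monotonicity $D_2^{(k)}(s)<D_2^{(k+1)}(s)$ for $0<s\in\Rnp$ follows immediately.

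For part (ii), I would apply the construction of part (i) to $\rho=\id+\alpha$ with a single fixed $c\in(0,1)$, say $c=1/2$. Then $\rho_2=\id+\alpha/2=\id+\alpha_2$ with $\alpha_2:=\alpha/2\in\Kinf$, so $D_2=\diag(\id+\alpha_2)$ has the required form. It remains to show that $\rho_1-\id$ can be written as some $\alpha_1\in\Kinf$. The key observation is the explicit substitution $t:=\rho_2^{-1}(s)$:
\[
\rho_1(s)-s \;=\; \rho(t)-\rho_2(t) \;=\; \bigl(t+\alpha(t)\bigr)-\bigl(t+\tfrac12\alpha(t)\bigr) \;=\; \tfrac12\,\alpha\bigl(\rho_2^{-1}(s)\bigr).
\]
Thus $\alpha_1:=\tfrac12\,\alpha\circ\rho_2^{-1}$ is a composition of $\Kinf$ functions scaled by $1/2$, hence in $\Kinf$, and $D_1=\diag(\id+\alpha_1)$ has the required form.

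The only subtle point, and the one I would handle most carefully, is ensuring that $\rho_1-\id$ is of class $\Kinf$ in part (ii): a priori one only knows that $\rho_1>\id$ and is continuous, but strict monotonicity and unboundedness of $\rho_1-\id$ are not automatic from $\rho_1\in\Kinf$ alone. The substitution above resolves this by exhibiting $\rho_1-\id$ explicitly as a $\Kinf$ function of $s$, and the rest of both statements reduces to mechanical verification of the defining properties of $\Kinf$ and of componentwise composition of diagonal operators.
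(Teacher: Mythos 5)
Your construction is correct and complete: the convex interpolation $\rho_2^{(k)}=(1-c_k)\id+c_k\rho$ with $c_k\uparrow$ in $(0,1)$ gives $\id<\rho_2^{(k)}<\rho$, the strict $k$-monotonicity on $(0,\infty)^n$, and $\rho_1^{(k)}=\rho\circ(\rho_2^{(k)})^{-1}\in\Kinf$ with $\rho_1^{(k)}>\id$, so $D=D_1^{(k)}\circ D_2^{(k)}$ holds componentwise; and in part (ii) you correctly identify the only delicate point, namely that $\rho_1-\id$ must itself be of class $\Kinf$, and settle it via the substitution $\rho_1(s)-s=\tfrac12\alpha\bigl(\rho_2^{-1}(s)\bigr)$, which is the same kind of manipulation the paper uses in Lemma~\ref{lemma:monotone:id+rho-inverse-is-id-rhotilde}. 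Note that the paper itself omits the proof of this lemma (it refers to \cite[p.10, p.29]{07-R-DISS}), so there is no in-paper argument to compare against; your interpolation argument is a clean, self-contained replacement, and it has the mild additional virtue of producing the whole family $D_2^{(k)}$ at once from a single explicit formula, exactly in the form needed later in the proof of Theorem~\ref{thm:monotone-path-irreducible-case}.
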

For maps $T:\Rnp\to \Rnp$ define the \emph{decay set}
$$
\Psi(T) := \{ s\in\Rnp:\, T(s)\leq s\}\,,
$$
where we again omit the reference to $T$ if this is clear from the
context.

\begin{lemma}
\label{lem:properties-Gamma-Psi-etc} Let $T:\Rnp\to\Rnp$ be
monotone and $D=\diag(\rho)$ for some $\rho\in\Kinf,\rho>\id$.
Then
\begin{enumerate}
\item $T^{k+1}(\Psi)\subset T^k(\Psi)$ for all $k\geq0$; \item
$\Psi(D\circ T)\cap\{s\in\Rnp: s>0\} \subset\Omega(T)$, if $T$
satisfies
 $T(v)<T(w)$ whenever $v<w$; the same is true for $D\circ T$
 replaced by $T\circ D$;
\end{enumerate}
\end{lemma}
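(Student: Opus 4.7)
My plan is to handle the two assertions separately, leveraging only monotonicity of $T$, the structure of the diagonal map $D$, and the elementary fact that $\rho(r)>r$ for all $r>0$ since $\rho\in\Kinf$ satisfies $\rho>\id$ (at $r=0$ we have $\rho(0)=0$).

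For~(i), I would first prove the base case $T(\Psi)\subset\Psi$: if $s\in\Psi$ then $T(s)\leq s$, and by monotonicity of $T$ we get $T(T(s))\leq T(s)$, so $T(s)\in\Psi$. Hence $T(\Psi)\subset\Psi$. The general inclusion then follows by applying $T^k$ on both sides: any $y\in T^{k+1}(\Psi)$ can be written as $y=T^k(T(s))$ with $s\in\Psi$, and since $T(s)\in\Psi$ we conclude $y\in T^k(\Psi)$. A short induction formalizes this.

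For~(ii), take $s\in\Psi(D\circ T)$ with $s>0$, so that $\rho(T(s)_i)\leq s_i$ for every $i$. I would split into two cases coordinate-wise: if $T(s)_i>0$, then $\rho>\id$ gives $T(s)_i<\rho(T(s)_i)\leq s_i$; if $T(s)_i=0$, then $T(s)_i=0<s_i$ since $s>0$. In either case $T(s)_i<s_i$, so $s\in\Omega(T)$. For the analogous statement with $T\circ D$ in place of $D\circ T$, I would use the additional strict monotonicity hypothesis: $s>0$ implies $D(s)>s$ componentwise (since $\rho(s_i)>s_i$), whence $T(D(s))>T(s)$, and combining with $T(D(s))\leq s$ yields $T(s)<s$. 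In both cases we land in $\Omega(T)$.

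I do not foresee any real obstacle; the only subtlety is that without the strict monotonicity of $T$ one has to handle the possibility that $T(s)$ has zero components in the $D\circ T$ case, which is why coordinate-wise bookkeeping is used there, whereas the $T\circ D$ case needs the strict monotonicity hypothesis to transfer the strict inequality $D(s)>s$ through $T$.
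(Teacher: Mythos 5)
Your argument is correct. The paper itself omits the proof of this lemma as routine (referring to \cite[p.10, p.29]{07-R-DISS}), and your reasoning is precisely the expected elementary one: $T(\Psi)\subset\Psi$ by monotonicity gives (i) after applying $T^k$, and the coordinate-wise use of $\rho>\id$ handles (ii). Your observation that the strict monotonicity hypothesis is only genuinely needed for the $T\circ D$ variant (to push $D(s)>s$ through $T$), while the $D\circ T$ case works for any monotone $T$ via the case split on $T(s)_i=0$ versus $T(s)_i>0$, is a correct minor sharpening of the statement as given.
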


The proofs of the lemmas are simple and thus omitted for reasons
of space. Nevertheless they can be found in \cite[p.10,
p.29]{07-R-DISS}.

We will need  the following connectedness property in the sequel.

\begin{proposition}
\label{prop:psi-pathwise-connected-AND-finite-length-Omega-path}
Let $\Gamma\in \Knn$ and $\mu \in \MAF_n^n$ be such that
$\Gamma_\mu$ satisfies the small gain condition
\eqref{eq:small-gain-condition}. Then $\Psi$ is nonempty and
pathwise connected. Moreover, if $\Gamma_\mu$ satisfies
$\Gamma_\mu(v)<\Gamma_\mu(w)$ whenever $v<w$, then for any
$s\in\Omega(\Gamma_\mu)$ there exists a strictly increasing
$\Omega$-path connecting $0$ and $s$.
\end{proposition}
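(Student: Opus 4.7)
I would construct both required paths iteratively along the orbit $s^{(k)}:=\Gamma_\mu^k(s)$, joining consecutive iterates by line segments and using the small gain condition to force this orbit to $0$.

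For \emph{pathwise connectedness of $\Psi$}, the set $\Psi$ is nonempty because $0\in\Psi$ trivially. To connect an arbitrary $s\in\Psi$ to $0$ by a path in $\Psi$, I would observe that monotonicity of $\Gamma_\mu$ makes $s^{(k+1)}\leq s^{(k)}$, so $s^{(k)}$ decreases componentwise to some limit $s^*\in\Rnp$ with $\Gamma_\mu(s^*)=s^*$ by continuity. The small gain condition~\eqref{eq:small-gain-condition} rules out any nonzero fixed point and forces $s^*=0$. For each $k$ and each $\lambda\in[0,1]$, the segment point $s_\lambda=(1-\lambda)s^{(k+1)}+\lambda s^{(k)}$ satisfies $s^{(k+1)}\leq s_\lambda\leq s^{(k)}$, so monotonicity gives $\Gamma_\mu(s_\lambda)\leq\Gamma_\mu(s^{(k)})=s^{(k+1)}\leq s_\lambda$; hence $[s^{(k+1)},s^{(k)}]\subset\Psi$. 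Concatenating these segments produces a continuous path from $s$ to $0$ inside $\Psi$.

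For the second statement, fix $s\in\Omega$ and assume the strict monotonicity hypothesis. Induction together with Lemma~\ref{lem:pathconvex}~\ref{item:1} shows every $s^{(k)}$ lies in $\Omega$ and is strictly positive, and strict monotonicity upgrades $s^{(k+1)}\leq s^{(k)}$ to $s^{(k+1)}<s^{(k)}$ componentwise. Lemma~\ref{lem:pathconvex}~\ref{item:2} then guarantees that every segment $[s^{(k+1)},s^{(k)}]$ lies in $\Omega$. To realize the resulting concatenation as a strictly increasing $\Omega$-path in the sense of Definition~\ref{def:omega-path}, I would parametrize by the $1$-norm: set $r_k:=|s^{(k)}|$ and, for $r\in[r_{k+1},r_k]$, let $\sigma(r)$ be the affine interpolation between $s^{(k+1)}$ and $s^{(k)}$. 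A direct calculation gives $|\sigma(r)|=r$, so $\sigma\colon(0,r_0]\to\Rnp$ is well-defined and continuous, and each $\sigma_i$ is piecewise linear with strictly positive slope on every piece, hence strictly increasing.

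The delicate step---and the one I expect to be the main obstacle---is verifying condition~(ii) of Definition~\ref{def:omega-path}: both an upper bound on $(\sigma_i^{-1})'$ and a strictly positive lower bound, uniformly on each compact subset of $(0,\infty)$. The $1$-norm parametrization is chosen precisely to handle this, since on any compact $K\subset(0,r_0]$ only finitely many indices $k$ satisfy $K\cap[r_{k+1},r_k]\neq\emptyset$ (because $r_k\to 0$). On such a $K$ the map $\sigma_i$ is therefore affine on a finite partition and its slope is bounded above and below by positive constants, which transfers to the required bilateral bounds~\eqref{sigma-bounds} for $(\sigma_i^{-1})'$; local Lipschitz continuity follows automatically.
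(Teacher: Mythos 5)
Your proposal is correct and follows essentially the same route as the paper: iterate $\Gamma_\mu$ from the given point, use monotonicity (Lemma~\ref{lem:pathconvex}-type arguments) to keep the connecting line segments inside $\Psi$ respectively $\Omega$, and invoke the small gain condition to force the iterates to the origin as in Lemma~\ref{lem:easyconv} and Lemma~\ref{lemma:sigma-s}. Your explicit $1$-norm parametrization and the verification of the slope bounds in Definition~\ref{def:omega-path} merely spell out details the paper leaves implicit.
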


\begin{proof}
Note that always $0\in\Psi$, hence $\Psi$ cannot be empty. Along the
lines the proof of Lemma~\ref{lemma:sigma-s}
it follows that each point in $\Psi$ is pathwise connected to the
origin.
\end{proof}

Another crucial step, which is of topological nature, regards
preimages of points in the decay set $\Psi$. In general it is not
guaranteed, that for $s\in\Rnp$ with $T(s)\in\Psi$, we also have
$s\in\Psi$. The set of points in $\Psi$ for which preimages of
arbitrary order are also in $\Psi$ is the set
$$
\Psi_\infty(T) := \bigcap_{k=0}^\infty T^k(\Psi).
$$
Of course, this set might be empty or bounded. We will use it to
construct $\Omega$-paths for operators $\Gamma_\mu$ satisfying the
small gain condition.

\begin{proposition}[{\cite[Prop.~5.4]{R08-positivity}}]
\label{prop:psi-infty-unbounded} Let $T:\Rnp\to\Rnp$ be monotone
and continuous and satisfy $T(s)\ngeq s$ for all $s\ne0$.  Assume
that $T$ satisfies the property
\begin{equation}
 \label{eq:unboundedness-assumption}
 \|s_k\|\to\infty \implies
 \|T(s_k)\|\longrightarrow\infty
\end{equation}
as $k\to\infty$ for any sequence $\{s_k\}_{k\in\N}\subset\Rnp$.

Then $\Psi_\infty(T)\subset\Psi(T)$, $\Psi_\infty(T)\cap
S_r\ne\emptyset$ for all $r\geq0$, and $\Psi_\infty(T)$ is
unbounded.
\end{proposition}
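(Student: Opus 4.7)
The first claim $\Psi_\infty(T)\subset\Psi(T)$ is immediate: by Lemma~\ref{lem:properties-Gamma-Psi-etc}(i), $T^{k+1}(\Psi)\subset T^k(\Psi)$ for every $k$, so $\Psi_\infty=\bigcap_{k\geq 0} T^k(\Psi)\subset T^0(\Psi)=\Psi$. For the remaining assertions my strategy has three steps: first, establish $\Psi\cap S_r\ne\emptyset$ for every $r>0$ via the Knaster-Kuratowski-Mazurkiewicz (KKM) theorem; second, upgrade this to $T^k(\Psi)\cap S_r\ne\emptyset$ for every $k\geq 0$ using path-connectedness and the iterated form of \eqref{eq:unboundedness-assumption}; third, pass to the intersection over $k$ by nested compactness, with the properness of $T$ inherited from \eqref{eq:unboundedness-assumption} furnishing the needed closedness.

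To apply KKM on the simplex $S_r$, set $C_i:=\{s\in S_r:T(s)_i\leq s_i\}$, which is closed by continuity of $T$. For any nonempty $I\subset\{1,\ldots,n\}$ consider the face $F_I:=\{s\in S_r:s_j=0\text{ for }j\notin I\}$. Take $s\in F_I$; since $r>0$ we have $s\ne 0$, and the small gain condition $T(s)\not\geq s$ yields an index $i$ with $T(s)_i<s_i$. For $j\notin I$ one has $T(s)_j\geq 0=s_j$, so such an index cannot lie outside $I$; hence $i\in I$ and $s\in C_i$, giving $F_I\subset\bigcup_{i\in I}C_i$. KKM then produces $\bigcap_i C_i\ne\emptyset$, and any point of this intersection lies in $\Psi\cap S_r$. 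The case $r=0$ is trivial, since $T(0)=0$ forces $0\in\Psi_\infty$.

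For the second step, by Proposition~\ref{prop:psi-pathwise-connected-AND-finite-length-Omega-path} the set $\Psi$ is path-connected, and being a continuous image, $T^k(\Psi)$ is path-connected too and contains $T^k(0)=0$. Iterating \eqref{eq:unboundedness-assumption} shows that $T^k$ also maps norm-unbounded sequences to norm-unbounded sequences; applied to points in $\Psi\cap S_{r'}$ with $r'\to\infty$ (which exist by step one), this produces points of $T^k(\Psi)$ of arbitrarily large norm. Taking any path in $T^k(\Psi)$ from $0$ to such a point and applying the intermediate value theorem to the continuous $1$-norm along it yields a point of $T^k(\Psi)\cap S_r$.

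The third step requires the sets $T^k(\Psi)\cap S_r$ to be compact. Assumption \eqref{eq:unboundedness-assumption} says exactly that preimages of bounded sets under $T$ are bounded; combined with continuity, $T$ is proper, and since $\Rnp$ is locally compact Hausdorff, a continuous proper map is closed. By induction the same holds for each $T^k$, so $T^k(\Psi)$ is closed, $T^k(\Psi)\cap S_r$ is a closed subset of the compact set $S_r$ and therefore compact, and these sets form a decreasing nested family of nonempty compacts. Their intersection equals $\Psi_\infty\cap S_r$, which is consequently nonempty. Unboundedness of $\Psi_\infty$ follows at once, since it meets $S_r$ for arbitrarily large $r$. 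The main technical point is the KKM verification, which hinges on combining the nonnegativity of the coordinates of $T$ with the small gain condition; once properness has produced closedness, the nested-compactness and intermediate-value arguments are routine.
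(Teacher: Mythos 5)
Your proof is correct. Note that the paper itself does not supply a proof of this proposition: it is quoted from \cite[Prop.~5.4]{R08-positivity}, and the text only records that the KKM-based Proposition~\ref{prop:KKM-result} is the essential ingredient. Your argument is a sound, self-contained reconstruction along those lines, with two differences worth recording. First, you apply KKM directly to the closed sets $C_i=\{s\in S_r: T(s)_i\le s_i\}$ and conclude only $\Psi\cap S_r\neq\emptyset$, which is weaker than the strict statement $\Omega\cap S_r\neq\emptyset$ of Proposition~\ref{prop:KKM-result} but is exactly what your nested-compactness argument needs; this is a genuine simplification, since strictness would require an additional perturbation step. Second, you extract from \eqref{eq:unboundedness-assumption} that $T$ (hence each $T^k$) is proper, therefore closed, so that $T^k(\Psi)\cap S_r$ is compact — this is the right way to use that hypothesis, and the decreasing family of nonempty compacta then yields $\Psi_\infty\cap S_r\neq\emptyset$ and unboundedness.

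Two small points should be made explicit. (i) You invoke Proposition~\ref{prop:psi-pathwise-connected-AND-finite-length-Omega-path} for path-connectedness of $\Psi(T)$, but that proposition is stated for gain operators $\Gamma_\mu$ with $\Gamma\in\Knn$, $\mu\in\MAF_n^n$, whereas here $T$ is an arbitrary monotone continuous map. The statement transfers verbatim: if $T(s)\le s$, then any convex combination $s_\lambda=\lambda T(s)+(1-\lambda)s$ satisfies $T(s_\lambda)\le T(s)\le s_\lambda$ by monotonicity, so the polygonal path through $s,T(s),T^2(s),\dots$ stays in $\Psi$, and $T^k(s)$ decreases to a fixed point, which is $0$ by the condition $T(s)\ngeq s$ for $s\neq 0$; state this (or note the transfer) rather than citing a result whose hypotheses do not literally apply. (ii) You use $T(0)=0$ (for $0\in\Psi$, hence $0\in T^k(\Psi)$, and for the case $r=0$) without justification; it is not a hypothesis, but follows in one line: if $s:=T(0)\neq 0$, monotonicity gives $T(s)\ge T(0)=s$, contradicting $T(s)\ngeq s$ for $s\neq 0$. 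With these two remarks added, the proof is complete.
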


\begin{figure}[htb]
\centering
\includegraphics[viewport=0 0 2500 1698,width=.618\textwidth]{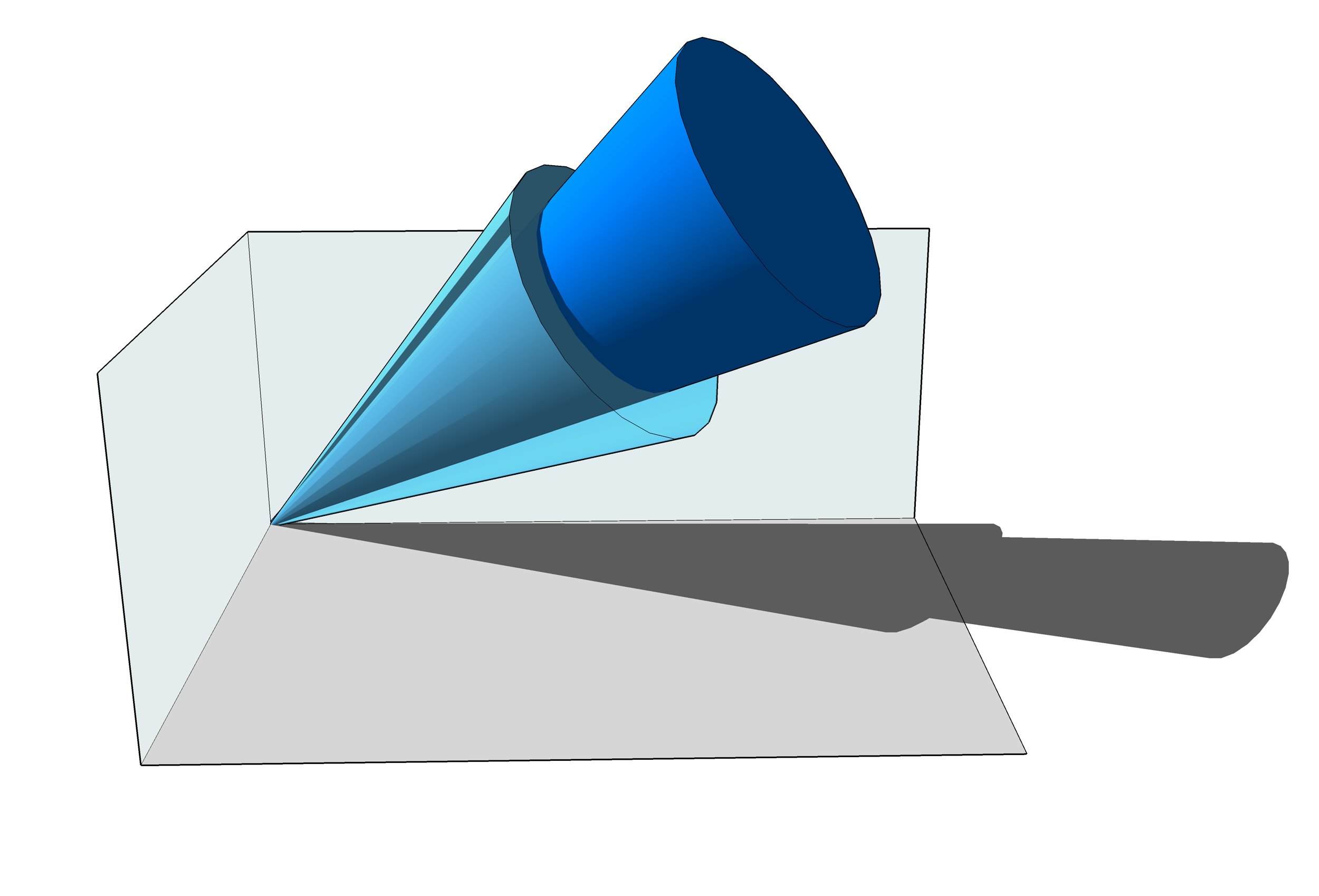}
\caption{A sketch of the set $\Psi_\infty\subset\Psi\subset\Rnp$
in
 Proposition~\ref{prop:psi-infty-unbounded}.}
\label{fig:psi-infty}
\end{figure}

A result based on the topological fixed point theorem due to
Knaster, Kuratowski, and Mazurkiewicz allows to relate $\Omega$
and the small gain condition. It is essential for the proof of
Proposition~\ref{prop:psi-infty-unbounded}.

\begin{proposition}
\label{prop:KKM-result} Let $T:\Rnp\to\Rnp$ be monotone and
continuous. If $T(s)\ngeq s$ for all $s\in\Rnp$ then the set
$\Omega\cap S_r$ is nonempty for all $r>0$.
\end{proposition}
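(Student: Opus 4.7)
The plan is to realize $\Omega\cap S_r$ as the intersection of an open cover of $S_r$ to which the open‑set version of the Knaster--Kuratowski--Mazurkiewicz theorem applies. Concretely, I identify $S_r$ with the standard $(n-1)$-simplex having vertices $v_i=re_i$ and define
\[
G_i := \{\,s\in S_r : T(s)_i < s_i\,\},\qquad i=1,\ldots,n,
\]
each of which is open in $S_r$ by continuity of $T$.

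The heart of the proof will be verifying the KKM face-covering condition, namely that for every non-empty $I\subset\{1,\ldots,n\}$,
\[
\co\{v_i : i\in I\} \;\subset\; \bigcup_{i\in I} G_i.
\]
I will argue as follows: any $s$ in this face satisfies $s_j=0$ for $j\notin I$ while $|s|=r>0$, so $s\ne 0$, and the hypothesis $T(s)\ngeq s$ furnishes an index $i^*$ with $T(s)_{i^*}<s_{i^*}$. Since $T$ maps into $\Rnp$, this forces $s_{i^*}>T(s)_{i^*}\ge 0$ and hence $i^*\in I$, so $s\in G_{i^*}$. In particular, the special case $I=\{1,\ldots,n\}$ yields that $\{G_i\}$ is an open cover of $S_r$.

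The open version of the KKM theorem then gives $\bigcap_{i=1}^n G_i\ne\emptyset$, and any element of this intersection lies in $\Omega\cap S_r$ by construction. The main delicacy I anticipate is the appeal to the open rather than the closed formulation of KKM: applying the classical closed version to the sets $\{s:T(s)_i\leq s_i\}$ would only deliver a point in $\Psi\cap S_r$ with non-strict inequalities, which is insufficient since $\Omega$ demands a strict inequality in every component. Fortunately, the face-covering condition I verify above actually holds with strict inequalities, so the open formulation of KKM is the natural (and applicable) tool.
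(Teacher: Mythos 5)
Your argument is correct and takes essentially the same route as the paper's (cited) proof: the proposition is derived from the Knaster--Kuratowski--Mazurkiewicz theorem applied to the sets $\Omega_i(T)\cap S_r$ on the simplex $S_r$, with the face-covering condition coming exactly from $T(s)\ngeq s$ together with nonnegativity of $T(s)$, as you verify. The only thing to make explicit is a reference (or short derivation) for the open-valued variant of the KKM lemma you invoke -- it is a standard result, obtainable from the closed version or from Brouwer's fixed point theorem via a partition of unity -- since, as you correctly note, the closed version applied to $\{s: T(s)_i\leq s_i\}$ would only yield a point of $\Psi\cap S_r$ rather than of $\Omega\cap S_r$.
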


In particular, $s\in\Omega\cap S_r$ for $r>0$ implies $s>0$. The
proof for this result can be found in \cite[Prop. 1.5.3,
p.26]{07-R-DISS} or in a slightly different form in
\cite{DRW-mcss}.

\subsection{Paths for $\Kinf\cup\{0\}$ gain matrices}
\label{sec:Kinf-case}

In this subsection we consider matrices $\Gamma\in\Kinfnn$, i.e.,
all nonzero entries of $\Gamma$ are assumed to be unbounded
functions.

In this setting we assume and utilize that the graph associated to
$\Gamma$ is strongly connected, i.e., $\Gamma$ is irreducible. So that
if we consider powers $\Gamma_\mu^k(x)$, for each components $i$ and
$j$ there exists a $k=k(i,j)$ such that $t\mapsto \Gamma_\mu^k(t\cdot
e_j)_i$ is an unbounded function.

\begin{theorem}
\label{thm:monotone-path-irreducible-case} Let $\Gamma\in\Kinfnn$
be irreducible, $\mu\in\MAF_n^n$, and assume $\Gamma_\mu\ngeq\id$.
Then there exists a strictly increasing path $\sigma\in\Kinf^n$
satisfying
$$
\Gamma_\mu(\sigma(r))<\sigma(r),\quad \forall r>0.
$$
\end{theorem}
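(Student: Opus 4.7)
The plan is to combine three ingredients already assembled in the paper: Proposition \ref{prop:psi-infty-unbounded} to produce the unbounded, preimage-closed decay set $\Psi_\infty(\Gamma_\mu)$; the strict monotonicity of $\Gamma_\mu$ that follows from irreducibility of $\Gamma$ together with the compatibility assumption of Remark \ref{genass}; and Proposition \ref{prop:psi-pathwise-connected-AND-finite-length-Omega-path} to glue finite-length strictly increasing $\Omega$-path segments together.

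First, I would verify the hypotheses of Proposition \ref{prop:psi-infty-unbounded} for $T=\Gamma_\mu$. Continuity, monotonicity, and $\Gamma_\mu(s)\ngeq s$ for $s\neq 0$ are immediate from the assumptions. The crucial check is the unboundedness condition \eqref{eq:unboundedness-assumption}: whenever $\|s_k\|\to\infty$ some coordinate $s_{k,j_0}$ diverges along a subsequence; irreducibility supplies an index $i_0$ with $\gamma_{i_0 j_0}\in\Kinf$, so $\gamma_{i_0 j_0}(s_{k,j_0})\to\infty$, and strict monotonicity of $\mu_{i_0}$ in its $j_0$-th slot together with (M3) forces $\Gamma_\mu(s_k)_{i_0}\to\infty$, hence $\|\Gamma_\mu(s_k)\|\to\infty$. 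Proposition \ref{prop:psi-infty-unbounded} then yields that $\Psi_\infty(\Gamma_\mu)$ is unbounded and meets every sphere $S_r$.

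Second, I would produce a strictly increasing, unbounded chain $0<s_1<s_2<\cdots$ in $\Omega$. Pick $p_0\in\Psi_\infty$ with $p_0>0$. By definition of $\Psi_\infty$, for every $k\geq 1$ there is $q_k\in\Psi$ with $\Gamma_\mu^k(q_k)=p_0$; because each $q_k\in\Psi$ one has $q_k\geq \Gamma_\mu(q_k)\geq\cdots\geq \Gamma_\mu^k(q_k)=p_0$. A compactness plus small-gain argument rules out boundedness of $\{q_k\}$: a bounded subsequence would converge to a nonzero fixed point of $\Gamma_\mu$, contradicting $\Gamma_\mu\ngeq\id$. After thinning to a subsequence with $\|q_k\|\to\infty$ and $q_k\leq q_{k+1}$, irreducibility plays its role twice: first, in Perron--Frobenius style, iterating $\Gamma_\mu$ a finite number of times propagates any coordinate-wise strict inequality through the strongly connected graph of $\Gamma$, which upgrades the $\leq$ between consecutive members of the chain to componentwise $<$; and second, the same propagation shows that any nonzero element of $\Psi$ eventually lies in $\Omega$ under iteration, so that after possibly replacing $s_k$ by $\Gamma_\mu^m(s_k)$ for large $m$ and then reindexing, we may take $s_k\in\Omega$ for all $k$.

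Third, I would build the path. Proposition \ref{prop:psi-pathwise-connected-AND-finite-length-Omega-path} yields a strictly increasing $\Omega$-path from $0$ to $s_1$; the same convex-combination argument underlying Lemma \ref{lem:pathconvex}, applied after iterating $\Gamma_\mu$ downward from $s_{k+1}$ until the iterate is dominated by $s_k$ (which happens in finitely many steps by Lemma \ref{lem:easyconv}), produces a strictly increasing $\Omega$-path from $s_k$ to $s_{k+1}$. Placing the $k$-th segment on the interval $[k-1,k]$ and reparameterizing continuously gives a strictly increasing $\sigma:[0,\infty)\to\Rnp$ with $\sigma(0)=0$, $\Gamma_\mu(\sigma(r))<\sigma(r)$ for all $r>0$, and each component unbounded (irreducibility again forces every coordinate of $s_k$ to diverge along the chain), so $\sigma\in\Kinf^n$. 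The main obstacle is step two: monotone non-decreasing preimage chains in $\Psi$ are easy from $\Psi_\infty$, but upgrading to strict componentwise increase inside $\Omega$ is delicate and is precisely where the hypothesis that $\Gamma$ is irreducible is used in an essential way.
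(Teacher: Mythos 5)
Your overall architecture (invoke Proposition~\ref{prop:psi-infty-unbounded}, build an unbounded preimage chain in the decay set, use irreducibility to get unboundedness in every component, then interpolate) is the same as the paper's, and your verification of \eqref{eq:unboundedness-assumption} is fine; but two of your key steps do not go through as stated. First, choosing $q_k$ with $\Gamma_\mu^k(q_k)=p_0$ gives no order relation between $q_k$ and $q_{k+1}$, and in the componentwise partial order on \Rnp\ you cannot simply ``thin to a monotone subsequence''; the correct device is to take \emph{nested} preimages $s^{k+1}\in T^{-1}(s^k)\cap\Psi_\infty(T)$, which is possible because $\Psi_\infty$ is backward invariant, and then $s^k=T(s^{k+1})\lneqq s^{k+1}$ automatically (this also repairs your fixed-point argument for unboundedness). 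Second, and more seriously, both places where you invoke irreducibility ``Perron--Frobenius style'' fail for irreducible but non-primitive gain matrices. Take $n=2$ and $\Gamma=\left(\begin{smallmatrix}0&\gamma_{12}\\ \gamma_{21}&0\end{smallmatrix}\right)$, so $\Gamma_\mu(s)=(\gamma_{12}(s_2),\gamma_{21}(s_1))^T$: a difference supported in one coordinate alternates between the two coordinates under iteration and is never strict in both simultaneously, so no finite number of iterations upgrades $\lneqq$ to $<$; likewise a point of $\Psi$ with one active equality (e.g.\ $s_1=\gamma_{12}(s_2)$, $\gamma_{21}(s_1)<s_2$) keeps exactly one active equality under every iteration and never enters $\Omega$, contradicting your claim that nonzero points of $\Psi$ eventually land in $\Omega$. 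Finally, the gluing step is also gapped: Lemma~\ref{lem:pathconvex} only places the segment between $s$ and $\Gamma_\mu(s)$ inside $\Omega$; iterating downward from $s_{k+1}$ until $\Gamma_\mu^m(s_{k+1})<s_k$ yields an increasing path that starts strictly \emph{below} $s_k$, not at $s_k$, and the segment joining two comparable points of $\Omega$ need not lie in $\Omega$, so your concatenation is either not strictly increasing or leaves $\Omega$.

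The paper resolves precisely these difficulties by robustifying the operator before building the chain: by irreducibility and \eqref{eq:small-gain-condition} there exists (see \cite[Prop.~5.8]{R08-positivity}) $\phi>\id$ such that $T:=\Gamma_\mu\circ D\ngeq\id$ for the diagonal operator $D=\mathrm{diag}(\phi)$; the nested preimage chain $s^k=T(s^{k+1})$ is built in $\Psi_\infty(T)$, so its points lie in $\Omega(\Gamma_\mu)$ with margin by Lemma~\ref{lem:properties-Gamma-Psi-etc}(ii); and the factorization $D=D_1^{(k)}\circ D_2^{(k)}$ of Lemma~\ref{lemma:D-factorization} is used to replace $s^k$ by $\tilde s^k:=D_2^{(k)}(s^k)$, which is strictly increasing in $k$ and satisfies the key inequality $\Gamma_\mu(\tilde s^{k+1})<\Gamma_\mu(D(s^{k+1}))=s^k<\tilde s^k$. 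It is this inequality (the image of the \emph{upper} endpoint lies strictly below the \emph{lower} endpoint), not Lemma~\ref{lem:pathconvex} alone, that places every convex combination of $\tilde s^k$ and $\tilde s^{k+1}$ in $\Omega(\Gamma_\mu)$ and yields a piecewise linear, strictly increasing $\Omega$-path also satisfying conditions (i)--(ii) of Definition~\ref{def:omega-path}, which your sketch never checks; the direction towards the origin is then handled by Lemma~\ref{lemma:sigma-s}, as you propose. Without some such ``extra room'' $D$, irreducibility alone cannot deliver the strict componentwise increase your construction needs.
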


The main technical difficulty in the proof is to construct the
path in the unbounded direction, the other case has already been
dealt with in
Proposition~\ref{prop:psi-pathwise-connected-AND-finite-length-Omega-path}.

The proof comprises the following steps: First due to
\cite[Prop.~5.8]{R08-positivity} we may choose a $\Kinf$ function
$\phi>\id$ so that for $D=\diag(\phi)$ we have $\Gamma_\mu\circ D
\ngeq\id$. Then we construct a monotone (but not necessarily
strictly monotone) sequence $\{s^k\}_{k\geq0}$ in
$\Psi(\Gamma_\mu\circ D)$, satisfying
$s^{k}=\Gamma_{\mu}(D(s^{k+1})) \lneqq s^{k+1}$, so that each
component sequence is unbounded. At this point a linear
interpolation of the sequence points may not yield a
\emph{strictly increasing} path. So finally we use the ``extra
space'' provided by $D$ in the set $\Omega(\Gamma_\mu)\supset
\Omega(\Gamma_\mu\circ D)$ to obtain a strictly increasing
sequence $\{\tilde s^k\}_{k\geq0}$ in $\Omega(\Gamma_\mu)$ which
we can linearly interpolate to obtain the desired $\Omega$-path.

\begin{proof}
Since $\Gamma$ is irreducible, it has no zero rows and hence
$\Gamma_\mu$ satisfies $\Gamma_\mu(v)<\Gamma_\mu(w)$ whenever
$v<w$. By \cite[Prop.~5.8]{R08-positivity} there exists a
$\phi>\id$ so that for $D=\diag(\phi)$ we have $\Gamma_\mu\circ D
\ngeq\id$.  Now we construct a nondecreasing sequence $\{s^k\}$ in
$\Psi(\Gamma_\mu\circ D)$:

Let $T:=\Gamma_\mu\circ D$. Then $T$ and by induction also all
powers $T^l$, $l\geq1$,
satisfy~\eqref{eq:unboundedness-assumption}.

By Proposition~\ref{prop:psi-infty-unbounded} the set $\Psi_\infty(T)$
is unbounded, so we may pick an $0 \neq s^0 \in \Psi_\infty(T)$.  We
can actually choose $s^0>0$, since the sequence $\{s^k\}$ we are going
to construct will be unbounded in every component, at which point we
may replace $s^{0}$ by some $s^{k}>0$ for $k$ large enough.

Due to irreducibility of $\Gamma$ (and Remark~\ref{genass}) the
following property holds: For any pair $1\leq i,j\leq n$ there
exists an $l\geq1$ such that
\begin{equation}
 r\mapsto (\Gamma_\mu^l (r e_j))_i
 \label{eq:13}
\end{equation}
is an unbounded and increasing function, where $e_j$ is the $j$-th
unit vector. By monotonicity the same holds when $T$ is considered
instead of $\Gamma_{\mu}$.  Now define a sequence
$\{s^k\}_{k\geq0}$ by choosing
$$
s^{k+1} \in T^{-1}(s^k)\cap\Psi_\infty(T)
$$
for $k\geq0$. This is possible, since by definition
$\Psi_\infty(T)$ is backward invariant under $T$.

This sequence $\{s^k\}$ satisfies $s^k\lneqq s^{k+1}$ by
definition. We claim that it is unbounded, and also unbounded in
every component: To this end assume first that it is bounded. Then
by monotonicity there exists a limit $s^*=\lim_{k\to\infty} s^k$.
By continuity of $T$ and since $s^k=T(s^{k+1})$ we have
$$
s^*=\lim_{k\to\infty} s^k = \lim_{k\to\infty}
T(s^{k+1})=T\left(\lim_{k\to\infty} s^{k+1}\right) = T(s^*)
$$
contradicting $T(s)\ngeq s$ for all $s\ne0$. Hence the sequence
$\{s^k\}$ must be unbounded.

Let $j$ be an index such that $\{s^k_j\}_{k\in\N}$ is unbounded,
let $i\in\{1,\ldots,n\}$ be arbitrary and choose $l$ such that the
function in \eqref{eq:13} is unbounded for $i,j,l$. Choose real
numbers $r_k \to \infty$ such that $r_k e_j \leq s^k$ for all
$k\in \N$. Then we have
\[ (T^l (r_k e_j))_i \leq (T^l (s^k))_i = s^{k-l}_i \,.\] As the term
on the left goes to $\infty$ for $k\to \infty$, so does $s^k_i$.
Hence $\{s^k\}$ is unbounded in every component.

Now by Lemma~\ref{lem:properties-Gamma-Psi-etc}(ii) the sequence
$\{s^k\}$ is contained in $\Omega(\Gamma_\mu)$, but it may not be
strictly increasing, as we only know $s^k\lneqq s^{k+1}$ for all
$k\geq0$. We define a strictly increasing sequence $\{\tilde
s^k\}$ as follows: By Lemma~\ref{lemma:D-factorization} for any
$k\geq0$ we may factorize $D=D_1^{(k)}\circ D_2^{(k)}$ so that 
$D_1^{(k)},D^{(k)}_2 > \id$ and
$D_2^{(k)}(s)< D_2^{(k+1)}(s)$ for all $k\geq0$ and all
$s>0$. Using this factorization we define
$$
\tilde s^k := D_2^{(k)}(s^k)
$$
for all $k\geq0$. By the definition of $D_2^{(k)}$, this sequence
is clearly strictly increasing and inherits from $\{s^k\}$ the
unboundedness in all components.

We claim that $\{\tilde s^k\}\subset \Omega(\Gamma_\mu)$: This
follows from
$$
\tilde s^k > s^k \geq \Gamma_\mu \circ D (s^k) = \Gamma_\mu \circ
D_1^{(k)} \circ D_2^{(k)} (s^k) = \Gamma_\mu \circ D_1^{(k)}
(\tilde s^k) > \Gamma_\mu(\tilde s^k).
$$

Now we prove that for $\lambda\in(0,1)$ we have $(1-\lambda)\tilde
s^k + \lambda\tilde s^{k+1} \in \Omega(\Gamma_\mu)$.  Clearly
$$
\tilde s^k < (1-\lambda)\tilde s^k + \lambda\tilde s^{k+1} <
\tilde s^{k+1}
$$
and application of the strictly increasing operator $\Gamma_\mu$
yields
\begin{multline*}
 \Gamma_\mu((1-\lambda)\tilde s^k + \lambda\tilde s^{k+1}) <
 \Gamma_\mu(\tilde s^{k+1})\\
 = \Gamma_\mu \circ D_2^{(k+1)} (s^{k+1}) < \Gamma_\mu \circ
 D_1^{(k+1)} \circ D_2^{(k+1)} (s^{k+1})\\
 = s^k < \tilde s^k < (1-\lambda)\tilde s^k + \lambda\tilde s^{k+1}.
\end{multline*}
Hence $(1-\lambda)\tilde s^k + \lambda\tilde s^{k+1}\in
\Omega(\Gamma_\mu)$.

Now we may define $\sigma$ as a parametrization of the linear
interpolation of the points $\{\tilde s^k\}_{k\geq0}$ in the
unbounded direction and utilize the construction from
Lemma~\ref{lemma:sigma-s} for the other direction. Clearly this
function $\sigma$ 
is an $\Omega$-path as it
has component functions of class $\Kinf$ and is
piecewise linear on every compact interval contained in
$(0,\infty)$.
\end{proof}

It is possible to consider the reducible case in a similar
fashion. The argument is essentially an induction over the number
of irreducible and zero blocks on the diagonal of the reducible
operator. We cite the following result from \cite[Theorem
5.10]{R08-positivity}. However, for the construction of an ISS
Lyapunov function in the case of reducible $\Gamma$, we take a
different route as described in Section~\ref{sec:reducible}, thus
avoiding the use of assumption (M4).

\begin{theorem}
\label{thm:monotone-path-reducible-case} Let $\Gamma\in\Kinfnn$ be
reducible, $\mu\in\MAF_n^n$ satisfying (M4), $D=\diag(\id+\alpha)$
for some $\rho\in\Kinf$, and assume $\Gamma_\mu\circ D \ngeq\id$.
Then there exists a monotone and continuous operator $\tilde
D:\Rnp\to\Rnp$ and a strictly increasing path $\sigma:\Rp\to\Rnp$
whose component functions are all unbounded, such that
$\Gamma_\mu\circ \tilde D(\sigma)<\sigma$.
\end{theorem}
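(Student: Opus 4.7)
The plan is to proceed by induction on the number $d$ of diagonal blocks in the block upper triangular form~\eqref{eq:reducedform} of $\Gamma$. The base case $d=1$ is supplied directly by Theorem~\ref{thm:monotone-path-irreducible-case} when $\Upsilon_{11}$ is irreducible and is trivial when $\Upsilon_{11}=0$. After a permutation we may therefore assume $\Gamma$ is already in the form~\eqref{eq:reducedform}, with each $\Upsilon_{jj}$ either irreducible or zero, and I would split coordinates as $s=(y^T,z^T)^T$ where $z$ corresponds to the last diagonal block $\Upsilon_{dd}$ and $y$ to the upper $(d-1)$ blocks.

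First I would construct the $z$-component of $\sigma$. Since $\Upsilon_{dd}$ depends only on $z$, the restricted operator $\Upsilon_{dd,\mu}$ inherits a small gain condition of the form $\Upsilon_{dd,\mu}\circ D_{dd}\ngeq\id$ from the hypothesis on $\Gamma_\mu\circ D$. By Theorem~\ref{thm:monotone-path-irreducible-case} (if $\Upsilon_{dd}$ is irreducible) or by a direct diagonal choice (if $\Upsilon_{dd}=0$) one obtains a strictly increasing, componentwise unbounded path $\sigma^{(d)}$ and a diagonal operator $\tilde D_d$ such that $\Upsilon_{dd,\mu}\circ \tilde D_d(\sigma^{(d)}(r)) < \sigma^{(d)}(r)$ for all $r>0$. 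Substituting $z=\sigma^{(d)}(r)$ into the upper-block equations and invoking subadditivity~(M4) of each $\mu_i$, the $i$th upper-block component of $\Gamma_\mu$ is bounded above by
\[
A_{\mu,i}(y) + \rho_i(r),
\]
where $A$ denotes the upper-left $(d-1)$-block submatrix of $\Gamma$ and $\rho_i\in\Kinf\cup\{0\}$ packages the contribution of $z=\sigma^{(d)}(r)$ through the off-diagonal block $\Upsilon_{[1,d-1],d}$.

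Next I would apply the inductive hypothesis to $A_\mu$, which has only $d-1$ diagonal blocks and whose small gain condition is inherited from that of $\Gamma_\mu\circ D$, obtaining a monotone operator $\tilde D'$ and a strictly increasing, componentwise unbounded path $\tau$ with $A_\mu\circ \tilde D'(\tau(r)) < \tau(r)$ for $r>0$. I would define the $y$-component of $\sigma$ as a reparametrization $\sigma^{(y)}(r)=\tau(\eta(r))$ for a sufficiently fast $\eta\in\Kinf$ chosen so that the componentwise gap $\tau_i(\eta(r))-A_{\mu,i}(\tilde D'(\tau(\eta(r))))$ dominates $\rho_i(r)$ for every $r>0$; this is feasible because $\tau$ is componentwise unbounded and each $\rho_i$ lies in $\Kinf\cup\{0\}$. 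Setting $\sigma(r)=(\sigma^{(y)}(r)^T,\sigma^{(d)}(r)^T)^T$ and assembling $\tilde D$ block-diagonally from $\tilde D'$ and $\tilde D_d$ then yields $\Gamma_\mu\circ\tilde D(\sigma(r)) < \sigma(r)$ for all $r>0$.

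The main obstacle is precisely the back-action of the lower block on the upper blocks: the perturbation $\rho_i(r)$ is itself unbounded in $r$, so a naive estimate can destroy the strict inequality $A_\mu\circ\tilde D'(\tau)<\tau$ once it is added back. The subadditivity hypothesis (M4) is exactly what permits peeling off this perturbation additively from the coupled expression, which is why (M4) is imposed here but not in the irreducible setting of Theorem~\ref{thm:monotone-path-irreducible-case}. A second, more technical point is to ensure that $\sigma$ remains in $\Kinf^n$ with strictly increasing components across block boundaries; this is arranged by choosing $\eta$ both to dominate every $\rho_i$ and to respect the local Lipschitz behaviour of $\tau$, thereby delivering an $\Omega$-path in the sense of Definition~\ref{def:omega-path} on every compact subinterval of $(0,\infty)$.
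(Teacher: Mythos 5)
The paper does not prove this theorem itself: it cites it from \cite[Theorem 5.10]{R08-positivity} and only describes the strategy as ``an induction over the number of irreducible and zero blocks on the diagonal.'' Your overall plan (block upper triangular form, peel off the self-contained last block, use (M4) to split the coupling from that block off additively, induct on the remaining blocks) is exactly that strategy, and the preliminary reductions are fine: the principal sub-operators do inherit the strong small gain condition, and (M4) is indeed what lets you bound the upper-block rows by $A_{\mu,i}(y)+\rho_i(r)$.

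However, there is a genuine gap at the decisive step. You claim you can choose a reparametrization $\eta\in\Kinf$ so that the margin $\tau_i(\eta(r))-A_{\mu,i}\bigl(\tilde D'(\tau(\eta(r)))\bigr)$ dominates $\rho_i(r)$ for all $r>0$, ``because $\tau$ is componentwise unbounded and $\rho_i\in\Kinf\cup\{0\}$.'' But the inductive hypothesis (and, in the base case, Theorem~\ref{thm:monotone-path-irreducible-case}) only gives $A_\mu\circ\tilde D'(\tau(t))<\tau(t)$, i.e.\ a \emph{positive} margin at every $t$; nothing forces this margin to tend to infinity, and unboundedness of $\tau$ alone does not imply it (both sides can diverge with a bounded, or even vanishing, difference). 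Since $\eta$ is continuous, increasing and unbounded, $\eta(r)$ sweeps out every large value of $t$, so if the margin dips below a fixed level at arbitrarily large $t$ while $\rho_i(r)\to\infty$, no admissible $\eta$ exists; domination for all $r$ genuinely requires the margin to diverge. This is precisely why the statement carries the operator $\tilde D$ and why the cited proof threads diagonal operators through the induction: one must strengthen the inductive claim so that a quantified surplus survives, e.g.\ by factoring $D=D_1\circ D_2$ as in Lemma~\ref{lemma:D-factorization} (or by using the equivalent left form $D\circ\Gamma_\mu\ngeq\id$), running the induction with one factor and keeping the other, so that the surplus is of the form $\alpha_2\bigl(A_{\mu,i}(\cdots)\bigr)$ or $\alpha_2(\sigma_i(\cdot))$ evaluated along an unbounded path and hence itself unbounded — the same ``extra space'' device used in the proofs of Theorem~\ref{thm:monotone-path-irreducible-case} and Corollary~\ref{sumcor}. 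Without such a strengthened induction, the absorption of the unbounded perturbation $\rho_i$ is not justified, and this is the heart of the reducible case rather than a removable technicality.
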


\subsection{General  $\mathbf\Gamma_{\mathbf\mu}$}
\label{sec:general-unbounded-Gamma-mu}

In the preceding subsections we have seen that it is possible to
construct $\Omega$-paths for matrices $\Gamma$ whose nonzero
entries are either all bounded, or all unbounded. It remains to
consider the case that the nonzero entries of $\Gamma$ are partly
of class $\Kinf$ and partly of class $\K\setminus\Kinf$. We can
state the following result.

\begin{proposition}
\label{prop:partly-bounded-Gamma}
Let $\Gamma\in\Knn$ and let $\mu\in\MAF_{n}^{n}$ satisfy
(M4). Assume $\Gamma_{\mu}$ satisfies
\eqref{eq:strong-small-gain-condition}. Then there exists an
$\Omega$-path for $\Gamma_{\mu}$.
\end{proposition}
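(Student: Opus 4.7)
The plan is to reduce to the case already covered by Theorem~\ref{thm:monotone-path-reducible-case}, in which the gain matrix lies in $\Kinfnn$, by splitting $\Gamma$ into its unbounded and bounded parts and absorbing the bounded contribution into the slack provided by the strong small gain condition. This is where assumption (M4) will enter.

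Concretely, I would write $\Gamma = \Gamma^U + \Gamma^B$ entrywise, where $\Gamma^U$ retains the entries of class $\Kinf$ and $\Gamma^B$ retains those in $\K\setminus\Kinf$, all other positions being $0$. Then $\Gamma^U\in\Kinfnn$, while the entries of $\Gamma^B$ are uniformly bounded. Since each $\mu_i$ is continuous and monotone by (M2'), there is a constant vector $M\in\Rnp$ with $\Gamma^B_\mu(s)\leq M$ for all $s$. Using the subadditivity (M4) of $\mu_i$, we obtain the componentwise bound
$$\Gamma_\mu(s) \;\leq\; \Gamma^U_\mu(s) + \Gamma^B_\mu(s) \;\leq\; \Gamma^U_\mu(s) + M \qquad \text{for all } s\in\Rnp.$$
Moreover, since $\Gamma^U_\mu\leq \Gamma_\mu$ and $D$ is componentwise monotone, the assumption $D\circ\Gamma_\mu\ngeq\id$ implies $D\circ\Gamma^U_\mu\ngeq\id$; by the remark following Definition~\ref{def:small-gain-conditions}, equivalently $\Gamma^U_\mu\circ D\ngeq\id$.

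Applying Theorem~\ref{thm:monotone-path-reducible-case} to $\Gamma^U$ then yields a monotone continuous operator $\tilde D\geq\id$ and a strictly increasing $\sigma^U\in\Kinf^n$ with $\Gamma^U_\mu(\tilde D(\sigma^U(r)))<\sigma^U(r)$ for all $r>0$. Setting $\tilde\sigma(r):=\tilde D(\sigma^U(r))$, one has $\tilde\sigma\geq\sigma^U$ and, inspecting the construction of $\tilde D$ in \cite{R08-positivity}, the gap $\tilde\sigma(r)-\sigma^U(r)$ tends componentwise to infinity with $r$. Hence there is $r_0>0$ so that $\tilde\sigma(r)-\sigma^U(r)>M$ componentwise for all $r\geq r_0$, and therefore
$$\Gamma_\mu(\tilde\sigma(r)) \;\leq\; \Gamma^U_\mu(\tilde\sigma(r)) + M \;<\; \sigma^U(r)+M \;<\;\tilde\sigma(r),$$
so $\tilde\sigma\big|_{[r_0,\infty)}$ traces an arc in $\Omega(\Gamma_\mu)$ with the requisite regularity inherited from $\sigma^U$ and $\tilde D$.

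To finish, I would extend this arc down to the origin inside $\Omega(\Gamma_\mu)\cup\{0\}$. Since $\tilde\sigma(r_0)\in\Omega(\Gamma_\mu)$, Lemma~\ref{lemma:sigma-s} (together with the strictly-increasing refinement from Proposition~\ref{prop:psi-pathwise-connected-AND-finite-length-Omega-path}) provides a path in $\Omega(\Gamma_\mu)\cup\{0\}$ joining $0$ to $\tilde\sigma(r_0)$; potential zero rows of $\Gamma$ are handled trivially in the corresponding coordinates. Concatenating with $\tilde\sigma\big|_{[r_0,\infty)}$, reparametrizing, and smoothing near $r_0$ to enforce the two-sided Lipschitz bounds~(i)--(ii) of Definition~\ref{def:omega-path}, one obtains the desired $\Omega$-path for $\Gamma_\mu$. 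The main obstacle is the quantitative control on the gap $\tilde D(\sigma^U(r))-\sigma^U(r)$: one needs that it \emph{eventually} dominates the prescribed constant vector $M$, which requires extracting a bit more from the proof of Theorem~\ref{thm:monotone-path-reducible-case} than its statement literally gives. The remaining steps---the componentwise monotonicity manipulations, the invocation of Lemma~\ref{lemma:sigma-s} on the small end, and the regularity checks at the concatenation point---are routine.
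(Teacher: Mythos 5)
Your proposal is correct and follows essentially the same route as the paper's proof: the same splitting $\Gamma=\Gamma_U+\Gamma_B$ into unbounded and bounded parts, the same use of (M4) to get $\Gamma_\mu\leq(\Gamma_U)_\mu+(\Gamma_B)_\mu$ with the bounded part dominated by a constant vector, the same appeal to Theorem~\ref{thm:monotone-path-reducible-case} for $(\Gamma_U)_\mu$ (including the extra structure $\tilde D$ acting componentwise as $\id+\tilde\rho$, $\tilde\rho\in\Kinf$, taken from \cite{R08-positivity} exactly as the paper itself does), and the same gluing to the origin via Lemma~\ref{lemma:sigma-s}. The only real deviation is in the far range: the paper uses $\sigma=\sigma_U+\sigma_B$ with a separately constructed, slowly reparametrized path $\sigma_B>s^*$ for the bounded part (via Proposition~\ref{prop:sigma-for-bounded-Gamma}), whereas you take $\tilde D\circ\sigma_U$ and note that the slack $\tilde\rho(\sigma_{U,i}(r))$ eventually dominates the constant bound $M$ --- a harmless streamlining that is valid at the same level of rigor (regularity of the resulting path being adjustable by minorizing $\tilde\rho$, as you indicate).
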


\begin{trivlist}
\item \emph{Proof.} Write
 $$\Gamma=\Gamma_U+\Gamma_B$$
 with $\Gamma_U\in\Kinfnn$,
 $\Gamma_B\in(\K\setminus\Kinf\cup\{0\})^{n\times n}$.
 Clearly we have $(\Gamma_U)_\mu\leq\Gamma_\mu$
 and $(\Gamma_B)_\mu\leq\Gamma_\mu$ and hence both maps satisfy
 $$(\Gamma_\bullet)_\mu\ngeq\id,$$
 where $\bullet$ serves as a placeholder for the subscripts
 $U$ and $B$.
\item The map $(\Gamma_B)_\mu$ is bounded. Hence $s^* := \sup
 (\Gamma_B)_\mu(\Rnp)$ is a finite vector.
\item    By Theorem~\ref{thm:monotone-path-reducible-case}.
 for $(\Gamma_U)_\mu$ there exists a $\Kinf$ function
 $\tilde\rho$ and a $\Kinf$-path
 $\sigma_U$ so that for the diagonal operator $\tilde
 D=\diag(\id+\tilde\rho)$ we have
 $$ ((\Gamma_U)_\mu \circ \tilde D) (\sigma_U(r)) <
 \sigma_U(r),\quad \text{for all } r>0\,.
 $$
\item Similarly, by
 Proposition~\ref{prop:sigma-for-bounded-Gamma}, there exists a
 $\Kinf$-path $\sigma_B$ such that
 $(\Gamma_B)_\mu(\sigma_B(r))<\sigma_B(r)$ for all $r>0$. In
 fact, and this is the key to this proof, it is possible to
 choose $\sigma_B$ in the region where $\sigma_B(r)>s^*$ to grow
 arbitrarily slowly:
 For any $\alpha,\beta\in\Kinf$ we can find a $\kappa\in\Kinf$, such
 that
 $$
 (\alpha\circ \kappa) (r)<\beta(r),\quad r>0,
 $$
 e.g., by choosing $\kappa\in\Kinf$ satisfying
 $\kappa(r)<(\alpha^{-1}\circ \beta)(r)$. This is always possible.
 Denote $\bar D=\diag(\tilde\rho)$, (so
 that $\tilde D = \id +\bar D$) and choose $r^*$, such that $\bar D(\sigma_U(r^*)) > s^*$.
 Then after reparametrization we may assume
 that
 \begin{gather*}
   \sigma_B(r)<\bar D(\sigma_U(r))\quad \text{and}\quad
   \sigma_B(r)>s^*
 \end{gather*}
 for all $r\geq r^*$.
Using Lemma~\ref{lemma:sigma-s}, we let
 $\sigma_L:[0,r^*]\to\Rnp$ be a finite-length path satisfying
 \begin{gather*}
   \Gamma_\mu(\sigma_L(r))<\sigma_L(r),\quad \forall r\in(0,r^*],
   \\
   \sigma_L\text{ is strictly increasing}\\
   \sigma_L(0)=0\text{ and } \sigma_L(r^*)=\sigma_B(r^*)+\sigma_U(r^*).
 \end{gather*}
 Now define $\sigma$ by
 $$
 \sigma(r)=
 \begin{cases}
   \sigma_B(r) + \sigma_U(r) & \text{ if } r>r^*\\
   \sigma_L(r) & \text{ if } r< r^*.
 \end{cases}
 $$
\item It remains to check that $\sigma$ satisfies
 $\Gamma_\mu(\sigma(r))<\sigma(r)$ for $r\geq r^*$. Indeed, for
 $r\geq r^*$ we have
 \begin{align*}
   \sigma(r)&=\sigma_U(r)+\sigma_B(r) > ((\Gamma_U)_\mu\circ \tilde D)(\sigma_U(r)) + s^*\\
   &> (\Gamma_U)_\mu(\sigma_U(r) + \sigma_B(r)) +
   (\Gamma_B)_\mu(\sigma_U(r)+\sigma_B(r))\\
   &\geq \Gamma_\mu(\sigma_U(r)+\sigma_B(r)),
 \end{align*}
 where the last inequality is due to (M4). This completes the
 proof. \endproof
\end{trivlist}

\subsection{Special case:  Maximization}
\label{sec:special-case:mu=max}

The case when the aggregation is the maximum, i.e., $\mu=\max$, is
indeed a special case, since not only the small gain condition can
be formulated in simpler manner, but also the path construction
can be achieved without the need of the diagonal operator $D$ as
before.

A \emph{cycle} in a matrix $\Gamma$ is finite sequence of nonzero
entries of $\Gamma$ of the form
$$
(\gamma_{i_1,i_2},\gamma_{i_2,i_3},\ldots,\gamma_{i_K,i_1}).
$$
A cycle is called \emph{subordinated} if $i_1>
\max\{i_2,\ldots,i_K\}$, and it is called a \emph{contraction}, if
$$
\gamma_{i_1,i_2}\circ
\gamma_{i_2,i_3}\circ\ldots\circ\gamma_{i_K,i_1} < \id.
$$
It is an easy exercise to show that when all subordinated cycles
are contractions then already all cycles are contractions.

\begin{theorem}
\label{thm:mu=max} Let $\mu=\max$ and $\Gamma\in\Knn$. If all
subordinated cycles of $\Gamma$ are contractions, then there
exists an $\Omega$-path with respect to $\Gamma_\mu$.
\end{theorem}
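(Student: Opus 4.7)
The plan is to prove the theorem by induction on $n$, constructing the path $\sigma$ fully explicitly and thereby bypassing the topological fixed-point arguments underpinning Theorem~\ref{path}(iii). The combinatorial engine is the equivalence (noted just before the theorem) that all subordinated cycles of $\Gamma$ being contractions is equivalent to \emph{every} cycle being a contraction, since compositions are invariant under cyclic rotation.

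For the base case $n=1$ the convention $\gamma_{11}\equiv 0$ forces $\Gamma_\mu\equiv 0$, so $\sigma(r)=r$ already is an $\Omega$-path. For the inductive step, given an $n\times n$ gain matrix $\Gamma$ whose subordinated cycles are all contractions, I would \emph{eliminate vertex $n$} by defining an $(n-1)\times(n-1)$ matrix $\tilde\Gamma$ whose entries absorb one-step detours through $n$:
$$
\tilde\gamma_{ij} := \max\{\gamma_{ij},\; \gamma_{in}\circ\gamma_{nj}\}, \qquad 1\leq i,j\leq n-1,
$$
with zero gains annihilating as usual. The crucial verification is that every cycle of $\tilde\Gamma$ is still a contraction: a subordinated cycle of $\tilde\Gamma$ at a vertex $i_1<n$ expands, using that composition distributes over max for monotone functions, into a finite max of compositions of gains along walks in $\Gamma$ from $i_1$ to itself visiting only indices in $\{i_1,\ldots,i_K\}\cup\{n\}$. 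Each such walk is a cycle in $\Gamma$ whose largest index is either $i_1$ (a subordinated cycle of $\Gamma$) or $n$ (which becomes subordinated after cyclic rotation); both are contractions by hypothesis, and hence so is their max.

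By the inductive hypothesis there exists an $\Omega$-path $(\sigma_1,\ldots,\sigma_{n-1})$ for $\tilde\Gamma_\mu$. Expanding the inequality $\tilde\Gamma_\mu(\sigma(r))<\sigma(r)$ componentwise yields, for all $i<n$ and $r>0$,
$$
\max_{j<n}\gamma_{ij}(\sigma_j(r)) < \sigma_i(r) \quad\text{and}\quad \gamma_{in}\Bigl(\max_{j<n}\gamma_{nj}(\sigma_j(r))\Bigr) < \sigma_i(r).
$$
The second family is equivalent to the strict sandwich $L(r) < U(r)$, where
$$
L(r):=\max_{j<n}\gamma_{nj}(\sigma_j(r)),\qquad U(r):=\min_{\substack{i<n\\ \gamma_{in}\neq 0}}\gamma_{in}^{-1}(\sigma_i(r)),
$$
with $U(r):=+\infty$ whenever the minimum is empty. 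I would then define $\sigma_n(r)$ as any $\Kinf$ function lying strictly between $L$ and $U$ (e.g.\ a regularised geometric mean $\sqrt{L(r)U(r)}$ with an appropriate $\Kinf$-majorisation if needed). The resulting $\sigma_n$ inherits the local bi-Lipschitz regularity required by Definition~\ref{def:omega-path} from the gains and from $\sigma_1,\ldots,\sigma_{n-1}$. A direct componentwise check then gives $\Gamma_\mu(\sigma(r))<\sigma(r)$ for all $r>0$.

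The main obstacle I expect is the combinatorial bookkeeping for degenerate entries, namely zero gains (so that vacuous maxima and missing inverses do not derail the expansion argument) and bounded gains in $\K\setminus\Kinf$ (for which $\gamma_{in}^{-1}(\sigma_i(r))$ is only defined on a bounded interval). In the latter case the corresponding constraint in $U$ becomes vacuous for large $r$ because $\gamma_{in}(\sigma_n(r))$ stays uniformly bounded while $\sigma_i(r)\to\infty$, so interpreting undefined entries of $U$ as $+\infty$ preserves the sandwich; the essential strict inequality $L<U$ is exactly the inductive output and nothing else.
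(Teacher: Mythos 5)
Your construction is essentially sound, and it takes a genuinely different route from the paper's. The paper's argument for Theorem~\ref{thm:mu=max} (sketched in Section~\ref{sec:special-case:mu=max}, with details in \cite{R08-positivity}) first shows that the cycle condition is equivalent to the small gain condition $\Gamma_\mu\ngeq\id$ and then reruns the general path construction of Theorem~\ref{thm:monotone-path-irreducible-case} with sums replaced by maxima, i.e.\ it goes through the sets $\Psi$ and $\Psi_\infty$, the KKM-based Proposition~\ref{prop:KKM-result} and a backward iteration, merely noting that the diagonal operator $D$ can be dispensed with. Your induction on $n$ by eliminating the last vertex, with $\tilde\gamma_{ij}=\max\{\gamma_{ij},\gamma_{in}\circ\gamma_{nj}\}$ and a sandwich choice of $\sigma_n$ between $L$ and $U$, exploits $\mu=\max$ directly: it avoids the topological machinery altogether, never needs the equivalence with $\Gamma_\mu\ngeq\id$, and treats reducible and irreducible $\Gamma$ on the same footing (the paper's general construction assumes irreducibility and delegates the reducible case to separate arguments). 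The sandwich step itself is correct, including your treatment of bounded gains via the $+\infty$ convention: the inductive inequality $\gamma_{in}(L(r))<\sigma_i(r)$ does give $L<U$ pointwise, and any $\sigma_n$ strictly between $L$ and $U$ closes all the inequalities $\Gamma_\mu(\sigma(r))<\sigma(r)$.

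Three points need repair. First, the eliminated matrix has diagonal entries $\tilde\gamma_{ii}=\gamma_{in}\circ\gamma_{ni}$, which are in general nonzero, so the statement you induct on must either allow such diagonal entries (each is a rotation of a subordinated $2$-cycle of $\Gamma$, hence $<\id$, so the inequality $\tilde\gamma_{ii}(\sigma_i(r))<\sigma_i(r)$ is automatic) or the diagonal must be explicitly discarded with that remark; as written, your base case with $\gamma_{11}\equiv0$ does not match the matrices the elimination produces. Second, an expansion term in which two or more edges of the $\tilde\Gamma$-cycle are rerouted through $n$ is a closed walk visiting $n$ several times; this is not a subordinated cycle of $\Gamma$ even after cyclic rotation, so ``both are contractions by hypothesis'' is too quick. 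You need the (easy, but necessary) reduction from closed walks to simple cycles: peel off the sub-cycle at $n$, which is a contraction, and use monotonicity of the gains to conclude the whole walk composition is a contraction. Third, the regularity claim is wrong as stated: class-$\K$ gains need not be locally Lipschitz, so $L$, $U$ and hence your $\sigma_n$ need not satisfy items (i)--(ii) of Definition~\ref{def:omega-path}, and $\sqrt{L(r)U(r)}$ degenerates when $L\equiv0$ or $U(r)=+\infty$. The fix is routine but should be stated: since $L<U$ with $L$ continuous and nondecreasing and $U$ nondecreasing, continuous as an extended-real-valued function and tending to infinity, one can choose a piecewise linear, strictly increasing, unbounded selection inside the open tube $\{(r,s):L(r)<s<U(r)\}$, which then has the locally bi-Lipschitz property required by Definition~\ref{def:omega-path}. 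With these repairs your argument yields a complete and more elementary proof of Theorem~\ref{thm:mu=max}.
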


The proof is composed of the following steps. The first step is to
show that the cycle condition (all cycles being contractions) is
equivalent to $\Gamma_\mu\ngeq\id$. Note that $\mu=\max$
automatically satisfies (M4), but (M4) is actually not needed for
the proof. Then the path-construction can essentially be done as
before, replacing sums by maximization, and one can even 
avoid
the use of $D=\diag(\id+\rho)$. Cf.\ also\cite{R08-positivity}.

\subsection{Proof of Theorem~\ref{path}}
\label{sec:general-case-K}

We now come to the easiest part of this section, which is to
combine all the preceding results to one general theorem for
matrices with entries of class $\K$, namely Theorem~\ref{path}.

\noindent\emph{Proof of Theorem~\ref{path}.}
\begin{enumerate}
\item In the linear case we can identify $\Gamma_\mu$ with a real
matrix with nonnegative entries. Then there exists a positive
vector $v>0$ so that $\Gamma_\mu v<v$ if the spectral radius
$\rho(\Gamma_\mu)<1$, cf.\ \cite{BeP79} or \cite[Lemma 2.0.1,
p.33]{07-R-DISS}. For $r>0$ this gives $\Gamma_\mu rv<rv$, i.e., a
$\Kinf$-path is given by $\sigma(r)=rv$. \item This is
Theorem~\ref{thm:monotone-path-irreducible-case}. \item This is
Theorem~\ref{thm:mu=max}. \item This is
Proposition~\ref{prop:sigma-for-bounded-Gamma}.
\endproof
\end{enumerate}

\section{Remarks for the case of three subsystems}
\label{sec:remarks-case-three}

Recall that a construction of an $\Omega$-path $\sigma$ for the
case of two subsystem was given in \cite{JMW96}. We have seen that
in a general case of $n\in\N$ subsystems the construction involves
more theory and topological properties of $\Gamma_\mu$ that follow
from the small gain condition. However in case of three subsystems
$\sigma$ can be found by rather simple considerations. Here we
provide this illustrative construction.  Let us consider the
special case $\Gamma\in(\Kinf\cup\{0\})^{3\times 3}$,
$\mu_{i}(s)=s_1+s_2+s_3$, $i=1,2,3$, and for simplicity assume
that $\gamma_{ij}\in\Kinf$ for all $i\ne j$, so that
\begin{equation}\label{eq:three-systems}
\Gamma= \left[\begin{array}{ccc}
0 & \gamma_{12} & \gamma_{13}\\
\gamma_{21} & 0 & \gamma_{23}\\
\gamma_{31} & \gamma_{32} & 0\\
\end{array}\right],\quad
\Gamma_\mu(s)= \left(\begin{array}{c}
\gamma_{12}(s_2) + \gamma_{13}(s_3)\\
\gamma_{21}(s_1) + \gamma_{23}(s_3)\\
\gamma_{31}(s_1) + \gamma_{32}(s_2)\\
\end{array}\right)\not\ge
\left(\begin{array}{c} s_1\\s_2\\s_3
\end{array}\right)
\end{equation}
Fix $s_{1}\geq0$, then it follows that there is exactly one $s_2$
satisfying
\begin{equation}\label{stern}
\gamma_{13}^{-1}(s_1-\gamma_{12}(s_2))=\gamma_{23}^{-1}(s_2-\gamma_{21}(s_1))\,,
\end{equation}
since, for a fixed $s_1$ the left side of \eqref{stern} is a
strictly decreasing function of $s_2$ while the right side of
\eqref{stern} is strictly increasing one.  The small gain
condition \eqref{eq:three-systems} in particular assures that
$\gamma_{12}^{-1}(\gamma_{21}^{-1}(r))>r$ for any $r>0$.
Let $s_2^\ast$ be the solution of $s_1-\gamma_{12}(s_2)=0$ and
$s_2^{\ast\ast}$ be the solution of $s_2-\gamma_{21}(s_1)=0$ then
$$s_2^{\ast}=\gamma_{12}^{-1}(s_1)=\gamma_{12}^{-1}(\gamma_{21}^{-1}(s_2^{\ast\ast}))>s_2^{\ast\ast}.$$
Hence the root of the left side of \eqref{stern} is greater than the
root of the right side of \eqref{stern}. This proves that for any
$s_1$ there is always exactly one $s_2$ satisfying \eqref{stern}, see Figure~\ref{fig:3systems_intersection}.
\begin{figure}[h]
\centering
      {\includegraphics[width=.5\columnwidth]{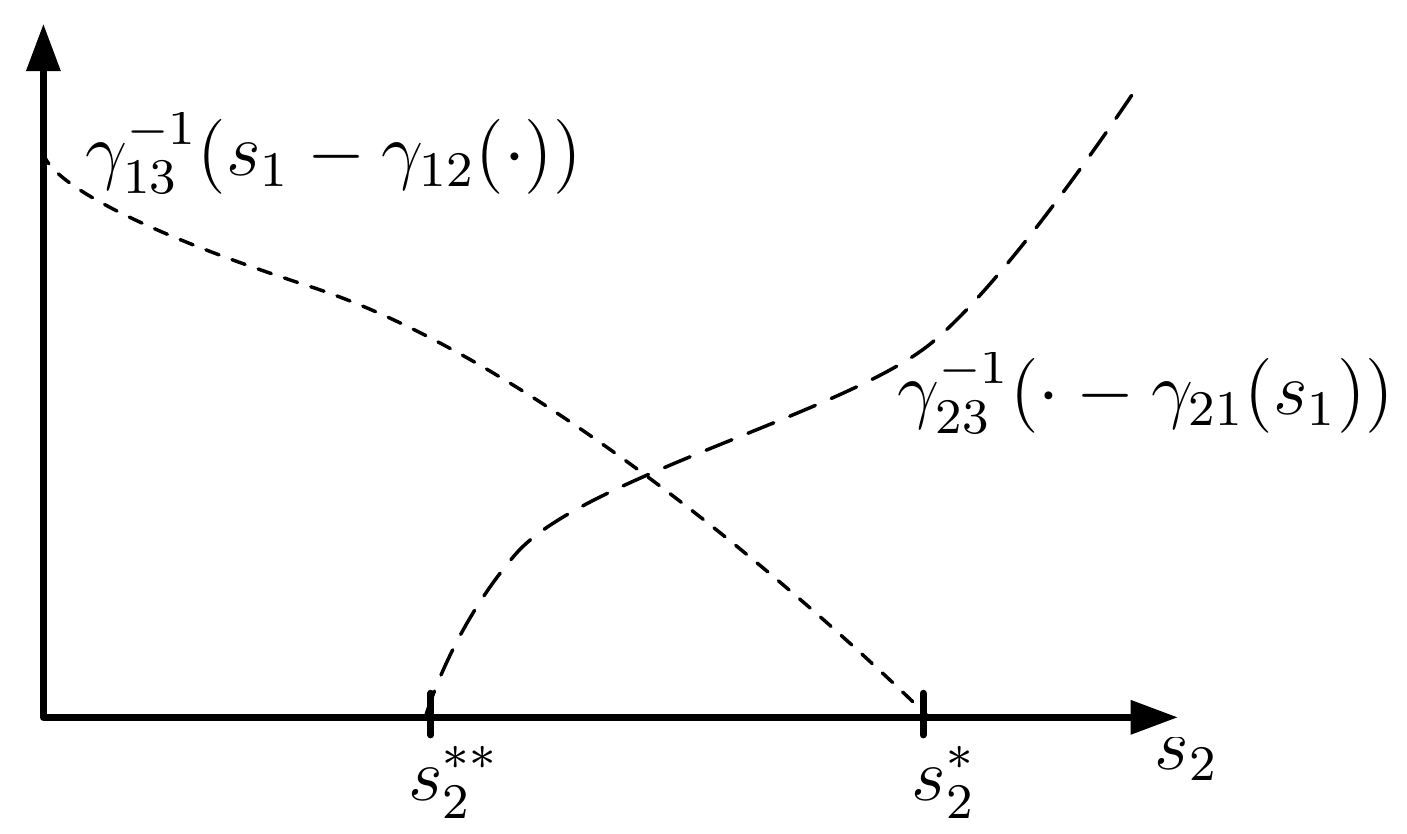}}
\caption{Visualization of \eqref{stern}.} \label{fig:3systems_intersection}
\end{figure}

By the continuity and monotonicity of
$\gamma_{12},\gamma_{21},\gamma_{13},\gamma_{23}$ it follows that
$s_2$ depends continuously on $s_1$ and is strictly increasing
with $s_1$. We can define $\sigma_1(r)=r$ for $r\geq0$ and
$\sigma_2(r)$ to be the unique $s_{2}$ solving \eqref{stern} for
$s_1=r$.

Denote $h(r)=\gamma_{31}(\sigma_1(r))+\gamma_{32}(\sigma_2(r))$
and
$g(r)=\gamma_{13}^{-1}(\sigma_1(r)-\gamma_{12}(\sigma_2(r)))=\gamma_{23}^{-1}(\sigma_2(r)-\gamma_{21}(\sigma_1(r)))$,
and define $M(r):=\{s_3:\, h(r)<s_3<g(r)\}.$ Let us show that
$M(r)\neq\emptyset$ for all $r>0$. If this is not true then there
exists $r^{\ast}>0$ such that $s_3^\ast:=h(r^{\ast})\ge
g(r^{\ast})$ holds.  Consider the point
$s^\ast:=(s_1^\ast,s_2^\ast,s_3^\ast):=(r^{\ast},\sigma_2(r^{\ast}),s_3^\ast).$
Then $s_{3}^{\ast}\geq
g(r^{\ast})=\gamma_{13}^{-1}(s_{1}^{*}-\gamma_{12}(s_{2}^{\ast}))$,
$s_{3}^{\ast}\geq g(r^{\ast}) =
\gamma_{23}^{-1}(s_{2}^{\ast}-\gamma_{21}(s_{1}^{\ast}))$, and
$s_{3}^{\ast}=h(r^{\ast})=\gamma_{31}(s_{1}^{\ast})+\gamma_{32}(s_{2}^{\ast})$.
In other words,
$$
\Gamma(s^\ast)=
   \begin{pmatrix}
     \gamma_{12}(s_2^{\ast})+\gamma_{13}(s_3^\ast)\\
     \gamma_{21}(s_1^{\ast})+\gamma_{23}(s_3^\ast)\\
     \gamma_{31}(s_1^{\ast})+\gamma_{32}(s_2^{\ast})
   \end{pmatrix}
\ge
\begin{pmatrix}
s_{1}^{\ast} \\
s_2^{\ast} \\
s_3^\ast
\end{pmatrix}\,,
$$
contradicting \eqref{eq:3}.  Hence $M(r)$ is not empty for all
$r>0$.

Consider the functions $h(r)$ and $g(r)$. The question is how to
choose $\sigma_3(r)\in M(r)$ such that  $\sigma_{3}\in\K_\infty$.
Note that $h(r)\in\K_\infty$.  Let $g^\ast(r):=\min_{u\geq r}
g(u)$, so that $g^{\ast}(r)\leq g(r)$ for all $r\geq0$. Since
$h(r)$ is unbounded, for all $r>0$ the set $C(r):=\arg\min_{u\geq
r} g(u)$ is compact and for all points $p\in C(r)$ the relation
$g^\ast(r)\geq g(p)>h(p)\geq h(r)$ holds. We have
$h(r)<g^\ast(r)\leq g(r)$ for all $r>0$ where $g^\ast$ is a (not
necessarily strictly) increasing function. Now take
$\sigma_{3}(r):=\frac{1}{2}(g^{\ast}(r)+h(r))$ and observe that
$\sigma_{3}\in\Kinf$ and $h(r)<\sigma_{3}(r) < g^{\ast}(r)$ for
all $r>0$. Hence $\sigma:=(\sigma_1,\sigma_2,\sigma_3)^T$
satisfies $\Gamma_\mu(\sigma(r))<\sigma(r)$ for all $r>0$.

The case where one of $\gamma_{ij}$ is not a $\K_\infty$ function
but zero can be treated similarly.

\section{Conclusions}
\label{sec:conclusions}

In this paper we have provided a method for the construction of
ISS Lyapunov functions for interconnections of nonlinear ISS
systems. The method applies for an interconnection of an arbitrary
finite number of subsystems interconnected in an arbitrary way and
satisfying a small gain condition. The small gain condition is
imposed on the nonlinear gain operator $\Gamma_\mu$ that we have
introduced here. This operator contains the information of the
topological structure of the network and the interactions between
its subsystems. An ISS Lyapunov function for such a network is
given in terms of ISS Lyapunov functions of subsystems and some
auxiliary functions. We have shown how this construction is
related to the small gain condition and mapping properties of the
gain operator $\Gamma_\mu$ and its invariant sets. Namely the
small gain condition guarantees the existence of an unbounded
vector function with path in an invariant set $\Omega$ of the
operator $\Gamma_\mu$. This auxiliary function can be used to
rescale the ISS Lyapunov functions of the individual subsystems
and aggregate them into an ISS Lyapunov function for the entire
network. The construction technique for this vector function has
been detailed as well as the construction of the composite Lyapunov
function. The constructed Lyapunov function is only locally
Lipschitz continuous, so that methods from nonsmooth analysis had
to be used. The proposed method has been exemplified for linear
systems and neural networks.

\bibliographystyle{siam}
\bibliography{literatur}
\end{document}